\newcommand{\N}{\mathbb{N}}
\newcommand{\R}{\mathbb{R}}
 \newcommand{\C}{\mathbb{C}}
\newcommand{\I}{\mathbb{I}}
\newcommand{\settc}[2]{\bigl\{\,#1 \bigm\vert #2\,\bigr\}}
\newcommand{\norm}[1]{\lVert #1 \rVert}
\newcommand{\dalfa}{\boldsymbol\alpha}
\newcommand{\cfstr}{\alpha_{\text{fs}}}
\newcommand{\mult}{\mu}
\newcommand{\free}{D_{0}}
\newcommand{\fw}{U_{_{\text{FW}}}}
\DeclareMathOperator{\RE}{Re}
\newcommand{\abs}[1]{\lvert #1 \rvert}
\DeclareMathOperator{\spann}{span}
\newtheorem{thm}[equation]{Theorem}
\newtheorem{cor}[equation]{Corollary}
\newtheorem{prop}[equation]{Proposition}
\newtheorem{lem}[equation]{Lemma}
\theoremstyle{remark}
\newtheorem{rem}[equation]{Remark}
\numberwithin{equation}{section}
\title[Relativistic one electron atom]{Ground state for the relativistic 
one electron atom}
\author[Coti Zelati]{Vittorio Coti Zelati}
\email[Coti Zelati]{zelati@unina.it}
\address[Coti Zelati]{Dipartimento di Matematica Pura e Applicata 
``R.~Caccioppoli''\\
Universit\`a di Napoli ``Federico II''\\
via Cintia, M.S.~Angelo\\
80126 Napoli (NA), Italy}
\author{Margherita Nolasco} 
\email[Nolasco]{nolasco@univaq.it}
\address[Nolasco]{Dipartimento di Ingegneria e Scienze
dell'informazione e Matematica\\
Universit\`a dell'Aquila\\
via Vetoio, Loc.~Coppito\\
67010 L'Aquila (AQ) Italia}
\thanks{Research partially supported by MIUR grant PRIN 2015
2015KB9WPT, ``Variational methods, with applications to problems in
mathematical physics and geometry''.}
\begin{document}

\begin{abstract} 
	We study the Dirac-Maxwell system coupled with an external
	potential of Coulomb type.  We use the Foldy--Wouthuysen (unitary)
	transformation of the Dirac operator and its realization as an
	elliptic problem in the 4-dim half space $\mathbb{R}^4_{+}$ with
	Neumann boundary condition.  Using this approach we study the
	existence of a ``ground state'' solution.
\end{abstract} 

\maketitle

\section{Introduction and main results}

The Dirac operator is a first order operator acting on the 4-spinors
$\psi \colon \mathbb{R}^{3} \to \mathbb{C}^{4}$ describing a
relativistic electron given by
\begin{equation*}
	D_{0}= - i c \hbar \dalfa \cdot \nabla + m c^2 \mathbf{\beta}
\end{equation*}
Here $c$ denotes the speed of light, $m > 0$ the mass, $\hbar$ the
Planck's constant, $\dalfa =(\alpha_1,\alpha_2,\alpha_3)$ and $\beta$
are the Pauli-Dirac $4 \times 4$-matrices,
\begin{equation*}
	\mathbf{\beta } = 
	\begin{pmatrix} 
		{\mathbb{I}}_{_2} & 0_{_2} \\
		0_{_2} & - \mathbb{I}_{_2}
	\end{pmatrix} 
	\qquad \mathbf{\alpha}_k =
	\begin{pmatrix} 
		0_{_2} & \mathbf{ \sigma}_k \\
		\mathbf{ \sigma}_k & 0_{_2} 
	\end{pmatrix} \qquad k=1,2,3
\end{equation*}
and $\mathbf{\sigma}_k$ are the Pauli $2\times 2$-matrices.  We take
units such that $m = c = \hbar = 1$.  We are interested in perturbed
Dirac operators $D_{0} + \cfstr V$, $V$ being a Coulomb potential,
$V(x) = - \frac{Z}{|x|}$, $\cfstr = \frac{e^{2}}{\hbar c} \approx
\frac{1}{137}$ is the dimensionless fine structure constant and $Z$,
positive integer, is the atomic number.

Due to the unboundedness of the spectrum of the free Dirac operator,
many efforts have been devoted to the characterization and computation
of the eigenvalues for the Dirac-Coulomb Hamiltonian $D_0 + \cfstr V$,
see \cite{Dolbeault_Esteban_Sere_2000} and references therein.

Here we add the interaction of the electron with its own (static)
electromagnetic field.  The scalar potential $\Phi$ and the vector
potential $A = (A_1,A_2,A_3)$ of the electromagnetic field generated
by the electron $\psi$ satisfy the following (static) Maxwell
equations
\begin{equation*}
	- \Delta \Phi = 4\pi \rho ; \qquad \qquad - \Delta A= {4\pi} J
\end{equation*}
where $\rho = |\psi|^2$ is the charge density and $J = (\psi, \dalfa
\psi )$ the current of the electron.  Therefore
\begin{equation*}
	\Phi = |\psi|^2 \ast \frac{1}{|x|} \qquad \text{and} \qquad A =
	(\psi, \dalfa \psi ) \ast \frac{1}{|x|}.
\end{equation*}

The interaction is obtained through the minimal coupling prescription,
which has, in our units, the following form
\begin{equation*}
	D \psi  = \dalfa \cdot (-i\nabla - \cfstr A) \psi + \cfstr \Phi \psi + 
	\beta \psi + \cfstr V \psi 
\end{equation*}

We have the following result
\begin{thm}
	\label{thm:main} 
	Let $V(x) =- \frac{Z}{|x|}$ with $Z \in \mathbb{N}$ the atomic
	number.  For any $4 < Z < 124 $ there exists $\mult \in (0, 1)$
	and $\psi \in H^{1/2}(\mathbb{R}^3; \mathbb{C}^4) \cap_{1 \leq q <
	3/2} W^{1,q}_{\text{loc}}(\mathbb{R}^3; \mathbb{C}^4)$ a solution
	of the following Maxwell-Dirac eigenvalue problem
	\begin{equation}
		\label{eq:MCD}
		\tag{MDC}
		\begin{cases} 
			\dalfa \cdot (-i\nabla - \cfstr A) \psi + \cfstr \Phi \psi
			+ \beta \psi + \cfstr V \psi = \mult \psi \\
			\abs{\psi}^{2}_{L^{2}} = 1 \\
			- \Delta \Phi = 4\pi\rho = 4\pi\abs{\psi}^{2} \qquad -
			\Delta A = 4\pi J = 4\pi (\psi, \dalfa \psi)
		\end{cases} 
	\end{equation}
	Moreover $ (\psi, \mult )$ is (up to phase) the state of
	lowest positive energy of the system (``ground state'').
\end{thm}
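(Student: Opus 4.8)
The plan is to recast \eqref{eq:MCD} as a constrained minimization on the half-space $\R^4_+=\R^3\times(0,\infty)$ and to solve it by concentration--compactness. After the Foldy--Wouthuysen transformation $\fw$ the free operator $\free$ becomes $\beta\sqrt{1-\Delta}$, so that a positive-energy state is carried by the upper two-spinor $\phi\in H^{1/2}(\R^3;\C^2)$ once the lower (``small'') two-spinor is eliminated through the concave maximization built into the Dolbeault--Esteban--S\'er\'e min--max. The nonlocal operator $\sqrt{1-\Delta}$ governing $\phi$ is realized as a Dirichlet--to--Neumann map,
\begin{equation*}
	\scalar{\phi}{\sqrt{1-\Delta}\,\phi}=\min\settc{\int_{\R^4_+}\bigl(\abs{\nabla U}^2+\abs{U}^2\bigr)}{U(\cdot,0)=\phi},
\end{equation*}
the minimizer solving $\Delta U=U$ in $\R^4_+$ with $-\partial_tU(\cdot,0)=\sqrt{1-\Delta}\,\phi$; since the Coulomb and Maxwell couplings act only on the trace, one is led to an energy functional $\mathcal E(U)$ on $H^1(\R^4_+;\C^2)$ whose Euler--Lagrange system is $\Delta U=U$ in $\R^4_+$ together with a nonlinear Neumann condition on $\partial\R^4_+=\R^3$ which, transported back by $\Rfw$, is the first line of \eqref{eq:MCD}, with $\mult$ the Lagrange multiplier of the constraint $\abs{\psi}^2_{L^2}=1$. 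A minimizer is then by construction the state of least energy, hence the ``ground state''.

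I would then argue as follows. \emph{(1) Coercivity on the constraint set.} The only term competing with the positive extension energy $\tfrac12\scalar{\phi}{\sqrt{1-\Delta}\phi}$ is the Coulomb attraction $-\cfstr Z\int\abs{x}^{-1}\abs{\psi}^2$; here one uses the sharp Hardy--Kato inequality for $\sqrt{1-\Delta}$ in the form due to Tix, which after the $\fw$-reduction yields positivity of the projected Dirac--Coulomb form as long as $\cfstr Z\le\tfrac{4\pi}{\pi^2+4}\approx0.906$. Since $4<Z<124$ gives $\cfstr Z<\tfrac{4\pi}{\pi^2+4}$ (with $\cfstr\approx1/137$), the quadratic part dominates the Coulomb singularity with room to spare, and the Maxwell part only helps: $\Phi\ge0$ makes the electric energy nonnegative, and the magnetic energy $-\cfstr\int A\cdot(\psi,\dalfa\psi)$ is controlled by it through $\abs{(\psi,\dalfa\psi)}\le\abs{\psi}^2$. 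Hence $\mathcal E$ is bounded below and coercive on $\settc{U}{\abs{\psi}^2_{L^2}=1}$ and $e:=\inf\mathcal E>0$, which will give $\mult>0$; this is where the upper bound $Z<124$ enters. \emph{(2) Strict binding.} Let $e_\infty$ be the infimum of the translation-invariant ``problem at infinity'' obtained by dropping $V$ (one has $e_\infty\le1$, and in the regime $\cfstr\approx1/137$ the self-interaction alone cannot bind, so $e_\infty=1$). I would prove $e<e_\infty$ and, more generally, the strict subadditivity $e<e(\theta)+e_\infty(1-\theta)$ for all $0<\theta<1$, by evaluating $\mathcal E$ on the harmonic extension of a suitably scaled profile $\phi_\lambda(x)=\lambda^{3/2}\phi_0(\lambda x)$ (matching the common $\abs{x}^{-1}$ scaling of the relativistic kinetic, the Coulomb, and the Maxwell energies): the gain from the attractive term is linear in $Z$ while the loss from the positive Maxwell self-repulsion is $Z$-independent, so the inequality holds precisely once $Z$ exceeds the explicit threshold that the hypothesis $Z>4$ encodes. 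This is where the lower bound $Z>4$ enters.

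\emph{(3) Compactness and conclusion.} Apply the concentration--compactness lemma to the densities $\abs{U_n}^2\,dx\,dt$ on $\R^4_+$ (and the boundary densities $\abs{\psi_n}^2\,dx$) along a minimizing sequence. Vanishing is impossible because it forces $\int\abs{x}^{-1}\abs{\psi_n}^2\to0$, whence $\liminf\mathcal E(U_n)\ge1>e$; dichotomy is excluded by step (2), the Maxwell terms being weakly continuous along the sequence (again using $\abs{(\psi,\dalfa\psi)}\le\abs{\psi}^2$ and the $H^{1/2}$ bound to pass to the limit in $\Phi$ and $A$). The Coulomb well pins the mass near the origin, so no translations are needed and $U_n\to U$ strongly in $H^1(\R^4_+)$, with $U$ a minimizer. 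Its trace, transported back by $\Rfw$, is a $\psi\in H^{1/2}(\R^3;\C^4)$ solving the first line of \eqref{eq:MCD} with multiplier $\mult$, and the Poisson equations recover $\Phi,A$; step (1) gives $\mult>0$ and step (2) gives $\mult<1$, so $\mult\in(0,1)$. For the local regularity, $\psi\in H^{1/2}(\R^3)\hookrightarrow L^3_{\mathrm{loc}}$, so $\abs{x}^{-1}\psi\in L^q_{\mathrm{loc}}$ for every $q<3/2$ by H\"older, hence $\free\psi=(\mult-\beta-\cfstr V-\cfstr\Phi+\cfstr\dalfa\cdot A)\psi\in L^q_{\mathrm{loc}}$ with the Coulomb term the borderline one; ellipticity of the first-order operator $\free$ then gives $\psi\in W^{1,q}_{\mathrm{loc}}(\R^3;\C^4)$ for all $1\le q<3/2$. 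Finally, $e=\inf\mathcal E$ equals, through the $\fw$-reduction and the min--max principle for operators with a spectral gap, the least value attainable by the (nonlinear) Rayleigh quotient over the positive-energy sector, so $\mult$ is the smallest eigenvalue among positive-energy solutions of \eqref{eq:MCD}; thus $(\psi,\mult)$ is the ground state, unique up to a constant phase on the $\C^4$-spinor.

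I expect the main obstacle to be step (3), and within it the strict binding inequality of step (2) in the presence of the sign-indefinite, nonlocal magnetic self-interaction $-\cfstr\int A\cdot(\psi,\dalfa\psi)$: one must show that it is weakly lower semicontinuous along minimizing sequences and cannot conspire with a splitting of the mass to destroy strict subadditivity, which is exactly where the pointwise bound $\abs{(\psi,\dalfa\psi)}\le\abs{\psi}^2$ --- absorbing the magnetic energy into the positive, weakly lsc electric one --- is essential. A secondary difficulty is genuinely two-sided: the admissible range of $Z$ is cut from above by a sharp operator inequality (Tix) and from below by an explicit trial computation, so both endpoints, not just one, must be controlled quantitatively; in particular, since $\fw$ does not act locally, the ``problem at infinity'' $e_\infty$ and the mass-splitting must be identified carefully at the level of the extended variables on $\R^4_+$.
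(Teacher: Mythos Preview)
Your overall architecture---Foldy--Wouthuysen transformation, Dirichlet--to--Neumann extension to $\R^4_+$, concave maximization over the lower two-spinor to obtain a reduced functional on the upper component, then minimization---matches the paper's. The use of Tix's inequality for the upper cutoff $Z<124$ and of the pointwise bound $\abs{J_\psi}\le\rho_\psi$ to control the magnetic term are also exactly what the paper does. The divergence is in how compactness of the minimizing sequence and the bound $\mult<1$ are obtained, and here your proposal has a genuine gap.

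You write that step~(2) (strict binding $e<e_\infty=1$) gives $\mult<1$. It does not. Because the Maxwell self-interaction is quartic in $\psi$, the Lagrange multiplier is \emph{not} the energy: on the constraint one has
\[
\mult \;=\; e \;+\; \frac{\cfstr}{2}\iint_{\R^3\times\R^3}\frac{\rho_\psi(y)\rho_\psi(z)-J_\psi(y)\cdot J_\psi(z)}{|y-z|}\,dy\,dz \;\ge\; e,
\]
the extra term being nonnegative by $\abs{J_\psi}\le\rho_\psi$. So $e<1$ says nothing about $\mult<1$, and $\mult\in(0,1)$ is part of the statement you must prove. In the paper, $\mult<1$ is obtained by an entirely different mechanism: one upgrades Ekeland to the Borwein--Preiss smooth variational principle so that the minimizing sequence $w_n$ additionally satisfies $\liminf_n d^2\mathcal{F}(w_n)[h;h]\ge 0$ for every $h$, and then tests this second-order condition with a concentrating family $h_\epsilon(x,y)=e^{-x}\epsilon^{3/2}\eta(\epsilon|y|)$. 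The resulting inequality reads, to leading order in $\epsilon$,
\[
0 \;\le\; 2a^2(1-\mult) \;+\; 2a^2\cfstr\!\int_{\R^3}\Bigl(-\tfrac{Z}{|y|}+\tfrac{4}{|y|}\Bigr)\epsilon\,\rho_\eta(|y|)\,dy \;+\; O(\epsilon^2),
\]
and the coefficient $4$ comes from the second variation of the Hartree term. This is precisely where $Z>4$ enters: it makes the $O(\epsilon)$ term strictly negative and forces $\mult<1$. Your trial-function argument for $e<1$ may well be correct, but it is aimed at the wrong quantity and does not recover this threshold.

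A second, related point: the paper does \emph{not} use concentration--compactness at all. Once $\mult<1$ is known, strong convergence of $\phi_n$ follows from a direct computation: testing the (almost) Euler--Lagrange equation with $\beta(\phi_n-\phi)$ and using $\mult<1$ together with Tix yields
\[
o(1)\;\ge\;2(1-\mult_n)\|h_{1,n}\|_{H^1}^2+2(1-2\cfstr\gamma_T)\|h_{2,n}\|_{H^1}^2,
\]
hence $\phi_n\to\phi$ in $H^1$. Your dichotomy analysis for the reduced functional would have to track how the implicitly defined maximizer $\phi_2(w)$ behaves under cut-offs and translations through the nonlocal $\fw$; you flag this as a difficulty, and the paper sidesteps it completely by the second-order route. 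If you want to keep concentration--compactness, you would still owe a separate proof of $\mult<1$, and for that the Borwein--Preiss/second-variation argument seems unavoidable.
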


This existence result is strictly related to the results in
\cite{EstebanSere99}, where the Authors consider the Dirac-Fock
equations for Atoms and Molecules.  The equation considered in that
article describe an atom (even a molecule) with a (fixed) nucleus and
N electrons, and takes into account the interaction of each electron
with the nucleus and the other electrons, but not the interaction of
the electrons with their own electric and magnetic field.  Using the
Hartree approximation one ends with an equation similar to the one for
the atom with one electron that we consider in our model
\eqref{eq:MCD}.

Let us also point out that we will prove our result via variational
methods, after performing a unitary change of variables (the
Foldy-Wouthuysen transformation) and a reduction of the problem to an
elliptic problem in the 4-dim half space $\R^4_{+}$ with nonlinear
Neumann boundary condition.

Even in this different setting, we have used in the analysis of the
variational structure of the problem some ideas contained in
\cite{EstebanSere99, Dolbeault_Esteban_Sere_2000, Morozov_Muller2015}.

\section{The FW transformation and the Dirichlet to Neumann operator}

Let us recall first the main properties of the free Dirac operator
$\free = -i\dalfa \cdot \nabla + \beta $ (see
e.g.~\cite{Thaller_1992}).  $\free$ is essentially self-adjoint on
$C_{0}^{\infty}(\mathbb{R}^3 \setminus \{ 0 \};\mathbb{C}^{4})$ and
self-adjoint on $\mathcal{D}(\free) = H^{1}(\mathbb{R}^{3};
\mathbb{C}^{4})$.  Its spectrum is purely absolutely continuous and it
is given by
\begin{equation*}
	\sigma(\free) = (-\infty,-1] \cup [1, +\infty).
\end{equation*}
Let define $\mathcal{Q}_{D_0} \colon
H^{1/2}(\mathbb{R}^{3};\mathbb{C}^{4}) \times
H^{1/2}(\mathbb{R}^{3};\mathbb{C}^{4}) \to \C$ the sesquilinear form
associated to the operator $D_0$.

Let denote by $ \hat{u}$ or $ \mathcal{F}(u)$ the Fourier transform
extending the formula
\begin{equation*} 
	\hat{u}(p) = \frac{1}{(2\pi)^{3/2}} \int_{\mathbb{R}^{3}} e^{-ip
	\cdot x} u(x) \, dx, \qquad \text{for } \, u \in
	\mathcal{S}(\mathbb{R}^{3}).
\end{equation*}
In the (momentum) Fourier space the free Dirac operator is given by
the multiplication operator $\hat{D}_{0} (p) = \mathcal{F} D_{0}
\mathcal{F}^{-1} = \dalfa \cdot p + \beta$ that is for each $p \in
\R^3$ an Hermitian $4 \times 4$-matrix with eigenvalues
\begin{equation*}
	\lambda_1(p) = \lambda_2(p) = - \lambda_3(p)= - \lambda_4(p) =
	\sqrt{|p|^2 + 1} \equiv \lambda(p).
\end{equation*}

The unitary transformation $U(p)$ which diagonalize $\hat{D}_{0}(p)$
is given explicitly by
\begin{align*}
	& U(p) = a_{+} (p) \I_{4} + a_{-} (p) \beta
	\frac{\dalfa \cdot p}{|p|} \\
    &U^{-1}(p) = a_{+} (p) \I_{4} - a_{-} (p) \beta
    \frac{\dalfa \cdot p}{|p|}
\end{align*}
with $a_{\pm}(p) = \sqrt{\frac{1}{2}(1 \pm \frac{1}{\lambda(p)})}$, we
have
\begin{equation*}
	U(p)\hat{D}_{0}(p) U^{-1}(p) = \lambda(p) \beta = \sqrt{|p|^2 + 1}
	\, \beta.
\end{equation*}

Hence there are two orthogonal projectors $ \Lambda_{\pm} $ on
$L^{2}(\mathbb{R}^{3}, \mathbb{C}^{4})$ , both with infinite rank,
given by
\begin{equation}
	\label{eq:projectors}
	\Lambda_{\pm} = \mathcal{F}^{-1}U(p)^{-1} \left( \frac{\I_{4} \pm
	\mathbf{\beta}}{2} \right) U(p)\mathcal{F}.
\end{equation}
such that
\begin{equation*}
	\free \Lambda_{\pm} = \Lambda_{\pm} \free = \pm \sqrt{- \Delta +
	1} \, \Lambda_{\pm} = \pm \Lambda_{\pm} \sqrt{- \Delta + 1} \,
	\I_{4}.
\end{equation*}

The operator $|D_0| = \sqrt{-\Delta + 1} \, \I_{4}$ can be defined for
all $f \in H^{1}(\mathbb{R}^{3}, \mathbb{C}^{4})$ as the inverse
Fourier transform of the $L^{2}$ function $\sqrt{|p|^{2} + 1} \,
\I_{4}\, \hat{f} (p)$ (see \cite{LiebLoss}),

Now we consider the Foldy-Wouthuysen (FW) transformation, given by the
unitary transformation $\fw = \mathcal{F}^{-1} U(p) \mathcal{F}$.
Under the FW transformation the projectors $\Lambda_{\pm}$ become
simply
\begin{equation*}
	\Lambda_{\pm,_{ FW}} = \fw \Lambda_{\pm} \fw^{-1} = \frac{\I_{4}
	\pm \mathbf{\beta}}{2},
\end{equation*}
and   $D_{_\text{FW}} = \fw D_{0}\fw^{-1} =|D_0|  \, \beta$
with the corresponding sesquilinear form
\begin{equation*}
	\mathcal{Q}_{ D_{_\text{FW}}}(f,g) = \int_{\mathbb{R}^{3}}
	\sqrt{|p|^{2} + 1} \, \, ({\hat{f}} (p), \beta \hat{g}(p)) \, dp =
	\mathcal{Q}_{ D_0}(\fw^{-1}f,\fw^{-1}g)
\end{equation*}
defined on the form domain $H^{1/2}(\mathbb{R}^{3};\mathbb{C}^{4})$.

The operator $\sqrt{- \Delta + 1}$, exactly as the fractional
Laplacian, can be related to the following Dirichlet to Neumann
operator (see for example \cite{CabreMorales05} for problems involving
the fractional laplacian, and \cite{CZNolasco2011, CZNolasco2013} for
more closely related models): given $u$ solve the Dirichlet problem
\begin{equation*}
	\begin{cases}
		- \partial^{2}_{x}v - \Delta_{y} v + v = 0 & \text{in }
		\mathbb{R}^{4}_{+} = \settc{(x,y) \in \mathbb{R} \times
		\mathbb{R}^{3}}{x > 0 } \\
		v(0,y) = u(y)  &\text{for } y
		\in \mathbb{R}^{3} = \partial \mathbb{R}^{4}_{+}.
	\end{cases}
\end{equation*}
and let 
\begin{equation*}
	\mathcal{T} u (y) = \frac{\partial v}{\partial \nu}(0,y) = -
	\frac{\partial v}{\partial x}(0,y)
\end{equation*}
Then $\mathcal{T} u (y) = \mathcal{F}^{-1}_{y} (\sqrt{|p|^2 + 1} \,
\hat{u}(p )) = \sqrt{- \Delta + 1} \, u(y)$.

Indeed, solving the equation via partial Fourier transform we get
\begin{equation*}
	v (x,y) = \mathcal{F}^{-1}_{y} (\hat{u}(p) e^{- x \sqrt{|p|^2 + 1}
	} ).
\end{equation*}

In view of the FW transformation we may consider the eigenvalue problem
\eqref{eq:MCD} for the perturbed Dirac operator
\begin{equation*}
	D_{0} - \cfstr \dalfa \cdot A + \cfstr \Phi + \cfstr V
\end{equation*}
as follows.  

Let $(\psi_{\mult}, \mult) \in H^{1/2} (\mathbb{R}^{3},\mathbb{C}^{4})
\times \R$ be a (weak) solution of the eigenvalue problem
\eqref{eq:MCD} and let $\phi_{\mult} $ be the following extension of
$\varphi_{\mult} = \fw \psi_{\mult} $ on the half-space (see lemma
\ref{lem:betterminimizer} below)
\begin{equation}  
    \label{eq:extension} 
	\phi_{\mult}(x,y) = \mathcal{F}^{-1}_{y} (U(p)
	\hat{\psi}_{\mult}(p ) e^{- x \sqrt{ |p|^2 + 1}}),
\end{equation} 
then $\phi_{\mult} \in H^{1}(\R^4_{+},\mathbb{C}^4)$,
$(\phi_{\mult} )_{tr} = \varphi_{\mult} $ and $\phi_{\mult} $ is
a (weak) solution of the following Neumann boundary value problem
\begin{equation}
	\label{eq:neumannpde}
	\tag{$\mathcal{P}_{\mult}$}
	\begin{cases}
		-\partial^2_x \phi_{\mult} - \Delta_{y}
		\phi_{\mult} + \phi_{\mult} = 0 &\text{in }
		\mathbb{R}^{4}_{+} \\
		\beta \displaystyle{\frac{\partial
		\phi_{\mult}}{\partial \nu} } + \fw ( - \cfstr \dalfa
		\cdot A + \cfstr \Phi + \cfstr V)
		\fw^{-1}\varphi_{\mult}= \mult
		\varphi_{\mult} &\text{on } \partial\mathbb{R}^{4}_{+} =
		\mathbb{R}^{3} \\
		|\varphi_{\mult}|^2_{L^2} =1 \\
		\Phi = |\fw^{-1}\varphi_{\mult}|^2 \ast \frac{1}{|x|} ;
		\quad A = (\fw^{-1} \varphi_{\mult}, \dalfa
		\fw^{-1}\varphi_{\mult} ) \ast \frac{1}{|x|}
    \end{cases}
\end{equation}
On the other hand, if $\phi_{\mult} \in H^{1}(\R^4_{+},\mathbb{C}^4) $
is a (weak) solution of the Neumann boundary value problem
\eqref{eq:neumannpde}, setting $\varphi_{\mult} = (\phi_{\mult} )_{tr}
$, then $(\fw^{-1} \varphi_{\mult} , \mult) \in H^{1/2}
(\mathbb{R}^{3},\mathbb{C}^{4}) \times \R$ is a (weak) solution of
(MDC).

\section{Notation and preliminary results}
 
To simplify the notation when clear from the context we will denote
simply with $H^{1/2}$ the Sobolev space
$H^{1/2}(\mathbb{R}^{3},\mathbb{C}^{n})$, with $H^{1}$ the space
$H^{1}(\mathbb{R}^{4}_{+},\mathbb{C}^{n})$ and with $L^{2}$ the spaces
$L^{2}(\mathbb{R}^{3},\mathbb{C}^{n})$ and $L^{2}(\mathbb{R}^{4}_{+},
\mathbb{C}^{n})$ (where $n = 2$ or $n = 4$).

We introduce the following scalar products and norms in $H^{1}$,
$H^{1/2}$ and $L^{2}$, respectively,
\begin{align*}
	&\langle f | g \rangle_{H^1} = \iint_{\R^4_{+}} ((\partial_{x} f,
	\partial_{x} g) + (\nabla_{y} f, \nabla_{y} g) + (f,g) ), \qquad
	\norm{f}_{H^{1}}^{2} = \langle f, f \rangle_{H^{1}}, \\
	&\langle f | g \rangle_{H^{1/2}} = \int_{\R^3} \sqrt{|p|^2 + 1}
	(\hat{f}, \hat{g}), \qquad \abs{f}_{H^{1/2}}^{2} = \langle f, f
	\rangle_{H^{1/2}}, \\
	&\abs{f}_{L^{2}}^{2} = \int_{\R^3} \abs{f}^{2}, \qquad
	\norm{f}^{2}_{L^{2}} = \iint_{\R^4_{+}} \abs{f}^{2}
\end{align*}
where $(v,w)$ denotes the scalar product in $\mathbb{C}^{n}$.

The following property can be easily verified (see
\cite{CotiZelati_Nolasco_2016}).
\begin{lem}
    \label{lem:betterminimizer}
	For $w \in H^{1}(\mathbb{R}^{4}_{+})$, let $w_{\text{tr}} \in
	H^{1/2}(\mathbb{R}^{3})$ be the trace of $w$ and define
    \begin{equation*}
		v(x,y) = \mathcal{F}^{-1}_{y} (\hat{w}_{\text{tr}}(p ) e^{- x
		\sqrt{|p|^2 + 1}}).
    \end{equation*}
    
    Then $v \in H^{1}(\mathbb{R}^{4}_{+})$ and
    \begin{equation}
		\label{eq:normaTracciaNormaH1}
		|w_{\text{tr}} |^2_{H^{1/2}} = \| v \|^2_{H^1} \leq \| w
		\|^2_{H^1}
    \end{equation}
\end{lem}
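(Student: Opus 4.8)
The plan is to pass to the partial Fourier transform in the $y$ variable, where $v$ has the explicit form $\hat v(x,p) = \hat w_{\text{tr}}(p)\,e^{-x\sqrt{\abs{p}^2+1}}$, to compute $\norm{v}_{H^1}^2$ by Plancherel, and then to deduce the inequality from an $H^1$-orthogonality argument comparing $v$ with $w$.

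\textbf{Computing the norm.} First I would write, using Plancherel in $y$ and Fubini,
\begin{equation*}
	\norm{v}_{H^1}^2 = \int_0^{\infty}\!\!\int_{\R^3}\Bigl(\abs{\partial_x \hat v(x,p)}^2 + (\abs{p}^2+1)\,\abs{\hat v(x,p)}^2\Bigr)\,dp\,dx .
\end{equation*}
Since $\partial_x \hat v(x,p) = -\sqrt{\abs{p}^2+1}\,\hat w_{\text{tr}}(p)\,e^{-x\sqrt{\abs{p}^2+1}}$, the bracket equals $2(\abs{p}^2+1)\,\abs{\hat w_{\text{tr}}(p)}^2\,e^{-2x\sqrt{\abs{p}^2+1}}$; integrating in $x$ produces $\sqrt{\abs{p}^2+1}\,\abs{\hat w_{\text{tr}}(p)}^2$, and integrating in $p$ gives $\norm{v}_{H^1}^2 = \abs{w_{\text{tr}}}_{H^{1/2}}^2$. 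This is finite by the trace inequality $\abs{w_{\text{tr}}}_{H^{1/2}} \le C\norm{w}_{H^1}$, so $v \in H^1(\R^4_+)$; and since $\hat v(0,p)=\hat w_{\text{tr}}(p)$, the trace of $v$ is $w_{\text{tr}}$.

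\textbf{The inequality via orthogonality.} Next I would set $z = w - v \in H^1(\R^4_+)$, which has zero trace by the previous step and therefore lies in the closure of $C_0^\infty(\R^4_+)$ in $H^1$. Expanding $\norm{w}_{H^1}^2 = \norm{v}_{H^1}^2 + 2\RE\langle v \mid z \rangle_{H^1} + \norm{z}_{H^1}^2$, it remains to show the cross term vanishes. For $z \in C_0^\infty(\R^4_+)$, integrating by parts in $x$ and in $y$,
\begin{equation*}
	\langle v \mid z \rangle_{H^1} = \iint_{\R^4_{+}}\bigl(-\partial_x^2 v - \Delta_y v + v,\ z\bigr) = 0 ,
\end{equation*}
because the boundary term at $x=0$ drops (there $z=0$) and $v$ solves $-\partial_x^2 v - \Delta_y v + v = 0$ in $\R^4_+$, as is clear from its Fourier representation. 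By density and the continuity of $z \mapsto \langle v \mid z \rangle_{H^1}$, the identity persists for every zero-trace $z$, whence $\norm{w}_{H^1}^2 = \norm{v}_{H^1}^2 + \norm{z}_{H^1}^2 \ge \norm{v}_{H^1}^2 = \abs{w_{\text{tr}}}_{H^{1/2}}^2$.

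\textbf{Expected obstacle.} The Plancherel computation is routine; the only step needing care is the integration by parts together with the density argument, i.e.\ that a zero-trace $z \in H^1(\R^4_+)$ can be approximated in $H^1$ by functions supported in the open half-space and that $v$ --- smooth in $x>0$ and exponentially decaying there thanks to the Fourier kernel --- is regular enough for the boundary term to be under control. Both are standard facts for the half-space, so I do not expect a genuine difficulty.
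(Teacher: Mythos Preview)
Your argument is correct and is the standard one: compute $\norm{v}_{H^1}^2$ by Plancherel in $y$, then use that $v$ is the $(-\Delta+1)$--harmonic extension of $w_{\text{tr}}$ to see that $v$ is $H^1$--orthogonal to the zero-trace function $w-v$. The paper does not include a proof of this lemma; it simply says the property ``can be easily verified'' and refers to \cite{CotiZelati_Nolasco_2016}, so there is nothing to compare against beyond noting that your write-up matches the intended elementary verification.
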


\begin{rem}
    \label{rem:positivita_quadratica}
	We recall that for all $f \in C^{\infty}_{0}(\mathbb{R}^{4})$
    \begin{equation*}
		\int_{\mathbb{R}^{3}} |f(0,y)|^{2} dy = \int_{\mathbb{R}^{3}}
		dy \int_{+\infty}^{0} \partial_{x} |f|^{2} dx \leq 2 \| f
		\|_{{L^2}} \| \partial_{x} f \|_{{L^2}}
    \end{equation*}
	and by density we get for all $\phi \in H^1$
	\begin{equation}
		\label{eq:stimaNormaL2NormaH1}
		|\phi_{tr}|^{2}_{L^2} \leq \iint_{\R^4_{+}} ( |\partial_{x}
		\phi |^2 + |\phi |^2) \, dx dy \leq \norm{\phi}^{2}_{H^{1}}.
	\end{equation}
\end{rem}

\begin{rem}
	\label{rem:Tix_convolution}
	Let us recall the following Hardy-type inequalities :
	\begin{description}
		\item[Hardy] for all $\psi \in H^1(\R^3)$
		\begin{equation*}
			| |x|^{-1} \psi |_{{L^2}} \leq 2 | \nabla \psi |_{{L^2}}
			\leq \gamma_{H} | |D_0| \psi |_{{L^2}}
		\end{equation*}
		where $\gamma_H = 2$.  
		
		\item[Kato] for all $\psi \in H^{1/2}(\R^3)$
		\begin{equation}
			\label{eq:Kato}
			| |x|^{-\frac{1}{2}} \psi |^2_{L^2} \leq \frac{\pi}{2}
			|(-\Delta)^{1/4} \psi |^2_{L^2} \leq \gamma_{K} |
			\psi|^2_{H^{1/2}}
		\end{equation}
		where $\gamma_{K} = \frac{\pi}{2}$.
    
		\item[Tix \cite{Tix_1998}] for all $\psi \in H^{1/2}(\R^3,
		\mathbb{C}^4)$
		\begin{equation}
			\label{eq:Tix}
			| |x|^{-\frac{1}{2}} \Lambda_{\pm} \psi |^2_{L^2} \leq
			\gamma_{T} | \Lambda_{\pm} \psi |_{H^{1/2}}^{2}
		\end{equation}
		where $\gamma_{T}= \frac{1}{2}{(\frac{\pi}{2} +
		\frac{2}{\pi})}$.
	\end{description}
\end{rem}

In view of the above inequalities, since $\Lambda_{\pm}$ commute with
translation we have the following result

\begin{lem}
	\label{lem:key_estimate}
	For any $\rho \in L^{1}(\mathbb{R}^{3})$ and $\psi \in
	H^{1/2}(\R^3,\mathbb{C}^4) $ we have
	\begin{align}
		& \int_{\R^3 } (\rho \ast \frac{1}{|x|} ) | \psi |^2 (y) dy
		\leq \frac{\pi}{2} \abs{\rho}_{L^{1}} |(-\Delta)^{1/4} \psi
		|^2_{L^2} \leq \gamma_{K} \abs{\rho}_{L^{1}} | \psi
		|_{H^{1/2}}^{2} \label{eq:KatoConvoluta}\\
		& \int_{\R^3 } (\rho \ast \frac{1}{|x|} ) | \Lambda_{\pm}\psi
		|^2 (y) dy \leq \gamma_{T} \abs{\rho}_{L^{1}} | \Lambda_{\pm}
		\psi |_{H^{1/2}}^{2}.
		\label{eq:key_estimate}
	\end{align}
\end{lem}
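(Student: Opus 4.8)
The plan is to reduce both inequalities to the pointwise (non-convolved) Kato and Tix inequalities from Remark \ref{rem:Tix_convolution} by writing the convolution as an average of translates. First I would observe that for $\rho \in L^1(\R^3)$ with $\rho \geq 0$ (the general case follows by splitting $\rho = \rho_+ - \rho_-$ and noting $|\rho| = \rho_+ + \rho_-$, or simply by dominating $|\rho \ast |x|^{-1}| \leq |\rho| \ast |x|^{-1}$), we have
\begin{equation*}
	\int_{\R^3} \Bigl( \rho \ast \frac{1}{|x|} \Bigr)(y)\, |\psi|^2(y)\, dy
	= \int_{\R^3} \rho(z) \Bigl( \int_{\R^3} \frac{|\psi(y)|^2}{|y-z|}\, dy \Bigr) dz,
\end{equation*}
where the interchange of integration order is justified by Tonelli since the integrand is nonnegative. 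The inner integral is $\bigl| |x|^{-1/2} \psi(\cdot + z) \bigr|^2_{L^2}$ after the change of variables $y \mapsto y + z$.

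Next I would apply the Kato inequality \eqref{eq:Kato} to the translated function $\psi(\cdot + z)$: since $\bigl| |x|^{-1/2} \psi(\cdot+z)\bigr|^2_{L^2} \leq \frac{\pi}{2} |(-\Delta)^{1/4} \psi(\cdot+z)|^2_{L^2}$ and the homogeneous seminorm $|(-\Delta)^{1/4} \cdot|_{L^2}$ is translation invariant, the inner integral is bounded by $\frac{\pi}{2}|(-\Delta)^{1/4}\psi|^2_{L^2}$, which is independent of $z$. Pulling this constant out of the $dz$ integral leaves $|\rho|_{L^1}$ (using $\rho \geq 0$, resp. the domination argument in general), and the final bound by $\gamma_K |\psi|^2_{H^{1/2}}$ follows from the second half of \eqref{eq:Kato}. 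This proves \eqref{eq:KatoConvoluta}. For \eqref{eq:key_estimate} the argument is identical, using Tix's inequality \eqref{eq:Tix} in place of Kato's and the crucial fact — already flagged in the sentence preceding the lemma — that $\Lambda_{\pm}$ commutes with translations, so that $\Lambda_{\pm}(\psi(\cdot+z)) = (\Lambda_{\pm}\psi)(\cdot+z)$ and hence $|\Lambda_{\pm}\psi(\cdot+z)|_{H^{1/2}} = |\Lambda_{\pm}\psi|_{H^{1/2}}$ is again $z$-independent.

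The only genuinely delicate point is the commutation $\Lambda_{\pm} \tau_z = \tau_z \Lambda_{\pm}$, where $\tau_z$ denotes translation by $z$: this holds because $\Lambda_{\pm}$ is, by \eqref{eq:projectors}, a Fourier multiplier operator (its symbol $U(p)^{-1}\frac{\I_4 \pm \beta}{2}U(p)$ depends only on $p$), and every Fourier multiplier commutes with translations. Everything else is Tonelli's theorem plus the translation invariance of the homogeneous Sobolev seminorm, so no further obstacle arises.
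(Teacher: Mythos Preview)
Your proposal is correct and follows essentially the same approach as the paper: swap the order of integration to write the convolution as an $x$-average of the Coulomb integrals $\int |\psi(y)|^2/|x-y|\,dy$, bound each uniformly by Kato (resp.\ Tix), and integrate out $\rho$. You are merely more explicit about the sign of $\rho$, Tonelli, and the Fourier-multiplier reason why $\Lambda_\pm$ commutes with translations, all of which the paper leaves tacit.
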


\begin{proof}
	\begin{multline*}
		\int_{\R^3 } \left( \int_{\R^{3}} \frac{\rho(x)}{|x-y|} \, dx
		\right) | \psi |^2 (y) \, dy = \int_{\R^3 } \left(
		\int_{\R^{3}} \frac{| \psi |^2 (y)}{|x-y|} \, dy \right) 
		\rho(x) \, dx
		\\
		\leq \frac{\pi}{2} \abs{\rho}_{L^{1}} |(-\Delta)^{1/4} \psi
		|^2_{L^2} \leq \gamma_{K} \abs{\rho}_{L^{1}} | \psi
		|_{H^{1/2}}^{2}
	\end{multline*}
	The second inequality can be proved in the same way since
	$\Lambda_{\pm}$ commute with translations.
\end{proof}

Hence in particular for $V(x) = - \frac{Z}{|x|}$ and $Z \leq Z_{c}=
124$ we have that $Z \cfstr \gamma_T \in (0,1)$ and
\begin{equation}
	\label{eq:stimaPotenzialeTix}
	\cfstr \int \abs{V}\abs{\Lambda_{\pm} \psi}^{2} \, dy = \cfstr
	||V|^{1/2} \Lambda_{\pm} \psi |^{2}_{L^2} \, dy \leq Z \cfstr \gamma_T |
	\Lambda_{\pm} \psi |_{H^{1/2}}.
\end{equation}

We consider the smooth functional $\mathcal{I} \colon H^{1}(\R^4_{+},
\mathbb{C}^4) \to \R $ given by
\begin{align*}
	\mathcal{I} (\phi) = & \| \phi_{1} \|^2_{H^1} - \| \phi_{2}
	\|^2_{H^1} \ + \cfstr \int_{\R^3}V \rho_{\psi} \, dy \\
	& + \frac{\cfstr}{2} \iint_{\R^3 \times \R^3} \frac{ \rho_{\psi}
	(y) \rho_{\psi} (z) } {|y - z| } \, dy \, dz -
	\frac{\cfstr}{2} \iint_{\R^3 \times \R^3 } \frac{ J_{\psi}(y)
	\cdot J_{\psi} (z)} {|y - z|} \, dy \, dz
\end{align*}
where $ \phi = \left( \begin{smallmatrix} \phi_1 \\ \phi_2
\end{smallmatrix} \right) \in H^1( \R^4_+ ; \mathbb{C}^2 \times
\mathbb{C}^2) $, $\psi= \fw^{-1} \phi_{tr} $, and $\rho_{\psi} =
|\psi|^2$, $J_{\psi} = ( \psi , \dalfa \psi )$.

It is easy to check that $(\phi_{\mult} , \mult)\in
H^{1}(\R^4_{+},\mathbb{C}^4) \times \R $ is a weak solution of the
Neumann boundary value problem \eqref{eq:neumannpde} if and only if
\begin{equation*}
	d\mathcal{I} (\phi_{\mult}) [h]= \mult \, 2 \RE \,
	\left\langle (\phi_{\mult})_{tr} | h_{tr} \right\rangle_{{L^2}}
	\qquad \forall h \in H^{1}(\R^4_{+}, \mathbb{C}^4).
\end{equation*}      
where $ d \mathcal{I} (\phi) : H^1 \to \R$ is the Frech\'et derivative
of the functional $\mathcal{I}$ given by
\begin{align*}
	d \mathcal{I} (\phi) [h] = & 2 \RE \langle \phi_{1} | h_{1}
	\rangle_{H^1} - 2 \RE \langle \phi_{2} | h_{2} \rangle_{H^1} + 2
	\cfstr \int_{\R^3 } V \RE (\psi ,\xi ) \, dy \\
	& + 2 \cfstr \iint_{\R^3 \times \R^3 } \frac{ \rho_{\psi} (y) \RE
	(\psi, \xi )(z) - J_{\psi} (y) \cdot \RE (\psi, \dalfa
	\xi )(z)} { |y-z | } \, dy \, dz
\end{align*}
where $h= \left( \begin{smallmatrix} h_1 \\ h_2 \end{smallmatrix}
\right) \in H^1( \R^4_+ ; \mathbb{C}^2 \times \mathbb{C}^2) $ and $\xi
= \fw^{-1} h_{tr}$.

Let compute also $ d^2 \mathcal{I} (\phi) : H^1 \times H^1 \to \R$,
setting $\eta = \fw^{-1} k_{tr} $ we have
\begin{align*}
	d^2 \mathcal{I} &(\phi) [h; k] = 2 \RE \langle k_{1} | h_{1}
	\rangle_{H^1} - 2 \RE \langle k_{2} | h_{2} \rangle_{H^1} + 2
	\cfstr \int_{\R^3} V \RE (\eta ,\xi ) \, dy \\
	& + 2 \cfstr \iint_{\R^3 \times \R^3 } \frac{ \rho_{\psi} (y) \RE
	(\eta ,\xi ) (z) - \ J_{\psi}(y) \cdot \RE (\eta
	,\dalfa \xi ) (z)} { |y-z | } \, dy \, dz \\
	& + 4 \cfstr \iint_{\R^3 \times \R^3 } \frac{\RE (\psi, \eta )(y)
	\RE(\psi, \xi )(z) -\RE (\psi, \dalfa \eta )(y) \cdot
	\RE (\psi, \dalfa \xi )(z) } { |y-z | } \, dy \,
	dz
\end{align*}

\begin{rem}
	\label{rem:pos_current_term}
	Note that for any $f \in L^1 \cap L^{3/2}$ we have that (see
	\cite[Corollary 5.10]{LiebLoss})
	\begin{equation*}
		\int_{\R^3 \times \R^3} \frac{f(y) \bar{f}(z) }{|y-z|} =
		\sqrt{\frac{2}{\pi}} \int_{\R^3} \frac{1}{|p|^2}
		|\hat{f}|^2(p) \, dp \geq 0.
	\end{equation*} 
	Hence in particular
	\begin{equation}
		\label{eq:correntePositiva}
		\iint_{\R^3 \times \R^3 } \frac{ J_{\psi} (y) \cdot J_{\psi}
		(z) } { |y-z | } \, dy \, dz \geq 0 .
	\end{equation} 

	Moreover since $ |J_{\psi}(y) | \leq \rho_{\psi} (y) $ for any $y
	\in \R^3$ and $ \psi \in H^{1/2}$, see \cite[Lemma
	2.1]{Georgiev_Esteban_Sere_1996}, we have that
	\begin{equation}
		\label{eq:stimaGES}
		\iint_{\R^3 \times \R^3 } \frac{ \rho_{\psi} (y) \rho_{\psi}
		(z) - J_{\psi} (y) \cdot J_{\psi} (z) } { |y-z | } \, dy
		\, dz \geq 0.
	\end{equation}
\end{rem}

We also recall the following convergence result.  Let $ v \in
H^{1/2}$, $f_n, g_n, h_n$ bounded sequences in $H^{1/2}$, and one of
them converge weakly to zero in $H^{1/2}$, then we have (see for
example \cite[Lemma 4.1]{CZNolasco2013})
\begin{equation}
	\label{eq:convergenceto0}
	\iint_{\R^3 \times \R^3} \frac{ |f_n|(y) |g_n|(y) |v|(z ) |h_n|(z)
	}{|y-z|} \, dy dz \to 0.  \qquad \text{as} \, n \to + \infty
\end{equation}        

The following lemma is essentially already contained in \cite[Lemma
B.1]{CotiZelati_Nolasco_2016}, see also \cite{Morozov_Vugalter_2006}
for related results.

\begin{lem}
	\label{lem:stima_commutatori}      
	Let $\chi \in C_0^{\infty}(\R^3)$, then $ [ \chi ,\fw^{-1}] $ and
	$ [ \chi ,\fw] $ are bounded operator from $H^{1/2} (\R^3;
	\mathbb{C}^4) \to H^{3/2} (\R^3; \mathbb{C}^4)$
    
	Moreover, for $R \geq 1 $ let define $\chi_{R}(y) = \chi ( R^{-1}
	y)$.  Then
	\begin{equation*}
		\| [ \chi_{R},\fw] \|_{ H^{1/2} \to H^{1/2} }= \|[ \chi_{R}
		,\fw^{-1}] \| = O(R^{-1}) \qquad \text{as} \quad R \to +
		\infty.
	\end{equation*}
\end{lem}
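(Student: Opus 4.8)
The plan is to exploit that, by the formulas recalled above, $\fw=\mathcal{F}^{-1}U(p)\mathcal{F}$ and $\fw^{-1}=\mathcal{F}^{-1}U^{-1}(p)\mathcal{F}$ are Fourier multipliers by the bounded $4\times4$-matrix symbols $U(p)=a_{+}(p)\I_4+a_{-}(p)\beta\frac{\dalfa\cdot p}{|p|}$ and $U^{-1}(p)=a_{+}(p)\I_4-a_{-}(p)\beta\frac{\dalfa\cdot p}{|p|}$. The crucial preliminary observation is that, since $a_{-}(p)^2=\frac{|p|^2}{2\lambda(p)(\lambda(p)+1)}$, one may rewrite
\[
	a_{-}(p)\frac{\dalfa\cdot p}{|p|}=\frac{\dalfa\cdot p}{\sqrt{2\lambda(p)(\lambda(p)+1)}},
\]
so that $U$ and $U^{-1}$ are in fact of class $C^{\infty}$ on all of $\R^3$ (the apparent singularity at $p=0$ is removable). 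An elementary computation from the explicit formula then gives
\[
	\sup_{p\in\R^3}|U(p)|+\sup_{p\in\R^3}\lambda(p)\,|\nabla_{p}U(p)|<\infty ,
\]
and likewise for $U^{-1}$. From the mean value theorem, this gradient decay, and a case split according to whether $|p-q|\le\tfrac12|q|$ or not (using $|p|\ge\lambda(p)/\sqrt2$ when $|p|\ge1$), one obtains the pointwise bound
\[
	|U(q)-U(p)|\le C\,|p-q|\,\lambda(q)^{-1}\qquad(p,q\in\R^3),
\]
and, by the same argument, uniformly for $0<s\le1$,
\[
	|U(p-sw)-U(p)|\le C\,s\,|w|\,\lambda(p)^{-1}\qquad(p,w\in\R^3),
\]
together with the analogous estimates for $U^{-1}$.

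For the first assertion fix $\chi\in C_{0}^{\infty}(\R^3)$. For $u\in\mathcal{S}(\R^3;\C^4)$ a direct computation of Fourier transforms gives
\[
	\widehat{[\chi,\fw]u}(p)=(2\pi)^{-3/2}\int_{\R^3}\hat\chi(p-q)\,\bigl(U(q)-U(p)\bigr)\,\hat u(q)\,dq ,
\]
and the same identity with $U$ replaced by $U^{-1}$ for $[\chi,\fw^{-1}]$. Using the first pointwise bound, the elementary inequality $\lambda(p)\le\sqrt2\,\lambda(q)\,\lambda(p-q)$, and the rapid decay of $\hat\chi$, we obtain
\[
	\lambda(p)^{3/2}\,\bigl|\widehat{[\chi,\fw]u}(p)\bigr|\le C\,(G*F)(p),\qquad G(w)=\lambda(w)^{3/2}\,|w|\,|\hat\chi(w)|,\quad F(q)=\lambda(q)^{1/2}\,|\hat u(q)| ,
\]
where $G\in L^{1}(\R^3)$ because $\hat\chi$ is a Schwartz function and $|F|_{L^2}=|u|_{H^{1/2}}$. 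By Young's inequality $|[\chi,\fw]u|_{H^{3/2}}=|\lambda^{3/2}\widehat{[\chi,\fw]u}|_{L^2}\le C\,|G|_{L^1}\,|u|_{H^{1/2}}$, and by density of $\mathcal{S}$ in $H^{1/2}$ this shows that $[\chi,\fw]$, and identically $[\chi,\fw^{-1}]$, is bounded from $H^{1/2}$ to $H^{3/2}$.

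For the quantitative statement put $\chi_{R}(y)=\chi(R^{-1}y)$, so that $\widehat{\chi_{R}}(w)=R^{3}\hat\chi(Rw)$; performing in the integral formula above the change of variable $\eta=R(p-q)$ yields
\[
	\widehat{[\chi_{R},\fw]u}(p)=(2\pi)^{-3/2}\int_{\R^3}\hat\chi(\eta)\,\Bigl(U\bigl(p-\tfrac{\eta}{R}\bigr)-U(p)\Bigr)\,\hat u\bigl(p-\tfrac{\eta}{R}\bigr)\,d\eta ,
\]
and the second pointwise bound (with $s=R^{-1}\le1$) gives
\[
	\lambda(p)\,\bigl|\widehat{[\chi_{R},\fw]u}(p)\bigr|\le C\,R^{-1}\int_{\R^3}|\eta|\,|\hat\chi(\eta)|\,\bigl|\hat u\bigl(p-\tfrac{\eta}{R}\bigr)\bigr|\,d\eta .
\]
Taking the $L^{2}$-norm in $p$, using Minkowski's integral inequality and the translation invariance of the $L^{2}$-norm, and then $\lambda\ge1$ together with $|u|_{L^2}\le|u|_{H^{1/2}}$, we conclude
\[
	\bigl|[\chi_{R},\fw]u\bigr|_{H^{1/2}}\le\bigl|\lambda\,\widehat{[\chi_{R},\fw]u}\bigr|_{L^2}\le C\,R^{-1}\Bigl(\int_{\R^3}|\eta|\,|\hat\chi(\eta)|\,d\eta\Bigr)\,|u|_{L^2}\le C\,R^{-1}\,|u|_{H^{1/2}} ,
\]
i.e. $\|[\chi_{R},\fw]\|_{H^{1/2}\to H^{1/2}}=O(R^{-1})$; the estimate for $[\chi_{R},\fw^{-1}]$ is obtained in the same way with $U^{-1}$ in place of $U$.

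The substance of the argument is entirely in the preliminary observation and in the two pointwise bounds: recognising that, after the rewriting, $U$ and $U^{-1}$ are globally smooth with gradient of order $\lambda(p)^{-1}$, and that this single power of decay, matched against the Schwartz decay of $\hat\chi$, is precisely what lets one trade one power of $\lambda(p)$ for the gain from $H^{1/2}$ to $H^{3/2}$ and, after the rescaling $\chi\mapsto\chi_{R}$, for the factor $R^{-1}$. The only step requiring some care is the uniformity in $R\ge1$ of the second pointwise bound; it follows from the case split according to whether $s|w|\le\tfrac12|p|$ or not, exactly as above, and uses no new idea.
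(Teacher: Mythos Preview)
Your argument is correct. The paper does not supply its own proof of this lemma; it simply remarks that the result ``is essentially already contained in \cite[Lemma B.1]{CotiZelati_Nolasco_2016}'' and refers the reader there. Your proof is the natural Fourier-multiplier commutator estimate: write $\fw$ as the multiplier by the smooth bounded symbol $U(p)$, use the crucial observation that $a_{-}(p)\dalfa\cdot p/|p|=\dalfa\cdot p/\sqrt{2\lambda(p)(\lambda(p)+1)}$ removes the apparent singularity at $p=0$ and yields $|\nabla U(p)|=O(\lambda(p)^{-1})$, and then run the standard kernel estimate via Peetre's inequality and Young/Minkowski. This is exactly the circle of ideas used in the cited reference. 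Two small remarks: in the second part you in fact prove the stronger bound $\|[\chi_R,\fw]\|_{L^2\to H^1}=O(R^{-1})$, from which the stated $H^{1/2}\to H^{1/2}$ estimate follows by $\lambda^{1/2}\le\lambda$ and $|u|_{L^2}\le|u|_{H^{1/2}}$; and your case split for the pointwise bounds is cleanest if one first disposes of the regime $|q|\le 1$ (resp.\ $|p|\le 1$) directly via the mean value theorem and $\lambda\ge 1$, reserving the dichotomy for $|q|\ge 1$ where $|p-q|>\tfrac12|q|$ genuinely forces $|p-q|/\lambda(q)$ to be bounded below.
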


\section{Maximization problem}

Our first step will be to maximize our functional in the sets
\begin{equation*}
	\mathcal{X}_{W} = \settc{ \phi =\left( \begin{smallmatrix} \phi_1 \\
	\phi_{2} \end{smallmatrix} \right) \in H^1(\R^4_{+}; \C^2 \times
	\C^2)}{\phi_1 \in W, \ |\phi_{tr}|^2_{L^2} = 1}.
\end{equation*}    
depending on a 1-dim vector space $W \subset H^{1}(\R^4_{+}; \C^2)$.
For each $\phi \in \mathcal{X}_{W}$ we will write $\phi_{2} \in
X_{-}$, so that $\phi \in W \times X_{-}$.

Denoting $\mathcal{G}(\phi) = \ |\phi_{tr}|^2_{L^2}$, the tangent
space of $ \mathcal{X}_{W} $ at some point $\phi \in \mathcal{X}_{W}$
is the set
\begin{equation*}
	T_{\phi} \mathcal{X}_{W} = \settc{h \in W \times
	X_{-}}{d\mathcal{G}(\phi)[h] \equiv 2 \RE \langle\phi_{tr} |
	h_{tr} \rangle_{_{L^{2}} }= 0}
\end{equation*} 
and $\nabla_{\mathcal{X}_{W} }\mathcal{I}(\phi)$, the projection of
the gradient $\nabla \mathcal{I}(\phi)$ on the tangent space
$T_{\phi}\mathcal{X}_{W} $ is given by
\begin{equation*}
	\nabla_{\mathcal{X}_{W} } \mathcal{I}(\phi) = \nabla
	\mathcal{I}(\phi) - \mult(\phi) \nabla \mathcal{G}(\phi)
\end{equation*}
where $\nabla \mathcal{I}(\phi)$, $\nabla \mathcal{G}(\phi) \in H^{1}$
are such that
\begin{equation*}
	\RE \langle \nabla \mathcal{I}(\phi) | h \rangle_{_{H^{1}}} =
	d\mathcal{I}(\phi)[h] \quad \text{ and } \quad \RE \langle \nabla
	\mathcal{G}(\phi) | h \rangle_{_{H^{1}}} = d\mathcal{G}(\phi)[h]
\end{equation*}
for all $ h \in H^{1}$ and $\mult(\phi) \in \mathbb{R}$ is such that
$\nabla_{\mathcal{X}_{W} }\mathcal{I}(\phi) \in T_{\phi}
\mathcal{X}_{W}$.

We begin giving a result on Palais-Smale sequences for $\mathcal{I}$ 
restricted on $\mathcal{X}_{W}$.
\begin{lem}
	\label{lem:PalaisSmale}
	Fix any $w \in H^{1}(\mathbb{R}^{4}_{+};\mathbb{C}^{2})$,
	$(w)_{\text{tr}} \neq 0$ and let $W = \spann\{w\}$.
	
	Suppose $\phi^{n} \in \mathcal{X}_{W}$ is a Palais-Smale sequence
	for $\mathcal{I}$ restricted on $\mathcal{X}_{W}$, at a positive
	level, that is
	\begin{itemize}
		\item $\mathcal{I}(\phi^{n}) = c + \epsilon_{n} \to c > 0$;
	
		\item $\nabla_{\mathcal{X}_{W}}\mathcal{I}(\phi^{n}) \to 0$.
	\end{itemize}
	
	Then $\phi^{n}$ is bounded and $| (\phi_{1}^{n})_{tr} |^2_{L^2} >
	\frac{1}{2}$.
\end{lem}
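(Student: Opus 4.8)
Write $\varphi^{n}=(\phi^{n})_{tr}$, $\psi^{n}=\fw^{-1}\varphi^{n}$, $\psi^{n}_{\pm}=\Lambda_{\pm}\psi^{n}$, $s_{n}=|(\phi^{n}_{1})_{tr}|^{2}_{L^{2}}$ and
$Q_{n}=\iint\frac{\rho_{\psi^{n}}(y)\rho_{\psi^{n}}(z)-J_{\psi^{n}}(y)\cdot J_{\psi^{n}}(z)}{|y-z|}$.
Since the Foldy--Wouthuysen transform turns $\Lambda_{\pm}$ into $\tfrac{\I_{4}\pm\beta}{2}$, one has $\psi^{n}_{+}=\fw^{-1}\bigl((\phi^{n}_{1})_{tr},0\bigr)$ and $\psi^{n}_{-}=\fw^{-1}\bigl(0,(\phi^{n}_{2})_{tr}\bigr)$, so $s_{n}=|\psi^{n}_{+}|^{2}_{L^{2}}$, $|\psi^{n}_{-}|^{2}_{L^{2}}=1-s_{n}$; and since $\fw^{-1}$ is an isometry of $L^{2}$ and of $H^{1/2}$, Lemma~\ref{lem:betterminimizer} gives $|\psi^{n}_{+}|^{2}_{H^{1/2}}\le\|\phi^{n}_{1}\|^{2}_{H^{1}}$, $|\psi^{n}_{-}|^{2}_{H^{1/2}}\le\|\phi^{n}_{2}\|^{2}_{H^{1}}$ and $|\psi^{n}|^{2}_{H^{1/2}}\le\|\phi^{n}\|^{2}_{H^{1}}$.

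The observation that makes boundedness easy is that $\phi^{n}_{1}$ is bounded for free: as $W=\spann\{w\}$ is one dimensional with $w_{tr}\ne0$, every $\phi_{1}\in W$ satisfies $\|\phi_{1}\|^{2}_{H^{1}}=C_{w}\,|(\phi_{1})_{tr}|^{2}_{L^{2}}$ with $C_{w}=\|w\|^{2}_{H^{1}}/|w_{tr}|^{2}_{L^{2}}$, and the constraint forces $|(\phi^{n}_{1})_{tr}|^{2}_{L^{2}}\le|\varphi^{n}|^{2}_{L^{2}}=1$, so $\|\phi^{n}_{1}\|^{2}_{H^{1}}\le C_{w}$. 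To control $\phi^{n}_{2}$ I solve for $\|\phi^{n}_{2}\|^{2}_{H^{1}}$ in $\mathcal{I}(\phi^{n})=\|\phi^{n}_{1}\|^{2}_{H^{1}}-\|\phi^{n}_{2}\|^{2}_{H^{1}}+\cfstr\int V\rho_{\psi^{n}}+\tfrac{\cfstr}{2}Q_{n}$, using $\cfstr\int V\rho_{\psi^{n}}\le0$ and $0\le Q_{n}\le\iint\frac{\rho_{\psi^{n}}(y)\rho_{\psi^{n}}(z)}{|y-z|}\le\gamma_{K}|\psi^{n}|^{2}_{H^{1/2}}\le\gamma_{K}\|\phi^{n}\|^{2}_{H^{1}}$ (by \eqref{eq:correntePositiva}, \eqref{eq:KatoConvoluta} and $|\rho_{\psi^{n}}|_{L^{1}}=1$); since $\cfstr\gamma_{K}<2$ and $\mathcal{I}(\phi^{n})\to c$, this bounds $\|\phi^{n}_{2}\|^{2}_{H^{1}}$, hence $\|\phi^{n}\|_{H^{1}}$. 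The same estimates give $\mathcal{I}(\phi^{n})\le(1+\tfrac{\cfstr\gamma_{K}}{2})\|\phi^{n}\|^{2}_{H^{1}}$, so $c>0$ forces $\|\phi^{n}\|^{2}_{H^{1}}\ge\delta_{0}>0$ for $n$ large; this lower bound is used below to make the final inequality strict.

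For $s_{n}>\tfrac12$ I exploit the gradient condition: there is $\mult(\phi^{n})\in\R$ with $d\mathcal{I}(\phi^{n})[h]-2\mult(\phi^{n})\RE\langle\varphi^{n}\mid h_{tr}\rangle_{L^{2}}=o(1)\|h\|_{H^{1}}$ for all $h\in W\times X_{-}$. Testing with $h=\phi^{n}$ and using $\mathcal{I}(\phi^{n})\to c$ gives $2\mult(\phi^{n})=d\mathcal{I}(\phi^{n})[\phi^{n}]+o(1)=2c+\cfstr Q_{n}+o(1)$, so $\mult(\phi^{n})\in[c/2,M]$ for $n$ large and some $M<\infty$. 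Testing with $h=(\phi^{n}_{1},0)$ and with $h=(0,\phi^{n}_{2})$ (both admissible and of bounded norm, with $\fw^{-1}h_{tr}$ equal to $\psi^{n}_{+}$, resp.\ $\psi^{n}_{-}$) and subtracting the two identities, all mixed contributions cancel upon writing $\psi^{n}=\psi^{n}_{+}+\psi^{n}_{-}$ — the cross terms $\RE(\psi^{n}_{\mp},\psi^{n}_{\pm})$ and $\RE(\psi^{n}_{\mp},\dalfa\psi^{n}_{\pm})$ drop out, using that the $\alpha_{k}$ are Hermitian — leaving
\[
\mult(\phi^{n})(2s_{n}-1)=\|\phi^{n}\|^{2}_{H^{1}}+P_{+}-P_{-}-o(1),
\]
where, for $\chi\in\{+,-\}$,
\[
P_{\chi}=\cfstr\int_{\R^{3}}V\,|\psi^{n}_{\chi}|^{2}+\cfstr\iint_{\R^{3}\times\R^{3}}\frac{\rho_{\psi^{n}}(y)\,|\psi^{n}_{\chi}|^{2}(z)-J_{\psi^{n}}(y)\cdot(\psi^{n}_{\chi},\dalfa\psi^{n}_{\chi})(z)}{|y-z|}\,dy\,dz.
\]
Applying the pointwise bound $|J_{\eta}|\le\rho_{\eta}$ (Remark~\ref{rem:pos_current_term}, \cite[Lemma 2.1]{Georgiev_Esteban_Sere_1996}) to $\eta=\psi^{n}$ and to $\eta=\psi^{n}_{+}$ yields $|J_{\psi^{n}}(y)\cdot(\psi^{n}_{+},\dalfa\psi^{n}_{+})(z)|\le\rho_{\psi^{n}}(y)|\psi^{n}_{+}|^{2}(z)$, so the double integral in $P_{+}$ is nonnegative; hence by Tix's inequality \eqref{eq:Tix} (applicable since $\psi^{n}_{+}=\Lambda_{+}\psi^{n}$) and Lemma~\ref{lem:betterminimizer}, $P_{+}\ge\cfstr\int V|\psi^{n}_{+}|^{2}\ge-\cfstr Z\gamma_{T}|\psi^{n}_{+}|^{2}_{H^{1/2}}\ge-\cfstr Z\gamma_{T}\|\phi^{n}_{1}\|^{2}_{H^{1}}$; applying the same bound to $\eta=\psi^{n}$ and $\eta=\psi^{n}_{-}$, together with $\cfstr\int V|\psi^{n}_{-}|^{2}\le0$ and \eqref{eq:key_estimate} (with $|\rho_{\psi^{n}}|_{L^{1}}=1$), gives $P_{-}\le2\cfstr\gamma_{T}|\psi^{n}_{-}|^{2}_{H^{1/2}}\le2\cfstr\gamma_{T}\|\phi^{n}_{2}\|^{2}_{H^{1}}$. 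Since $\cfstr Z\gamma_{T}<1$ (see \eqref{eq:stimaPotenzialeTix}) and $2\cfstr\gamma_{T}<1$, setting $\kappa_{0}=\min(1-\cfstr Z\gamma_{T},\,1-2\cfstr\gamma_{T})>0$ we get $\|\phi^{n}\|^{2}_{H^{1}}+P_{+}-P_{-}\ge\kappa_{0}\|\phi^{n}\|^{2}_{H^{1}}\ge\kappa_{0}\delta_{0}$, and dividing by $\mult(\phi^{n})\in(0,M]$ yields $2s_{n}-1\ge(\kappa_{0}\delta_{0}-o(1))/M>0$ for $n$ large, i.e.\ $|(\phi^{n}_{1})_{tr}|^{2}_{L^{2}}=s_{n}>\tfrac12$ eventually.

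The step I expect to be the real obstacle is the last chain of estimates: keeping the coefficients of $\|\phi^{n}_{1}\|^{2}_{H^{1}}$ and $\|\phi^{n}_{2}\|^{2}_{H^{1}}$ strictly positive when $Z$ is close to $124$ is only possible if the current term in $P_{+}$ is handled with the sharp inequality $|(\psi^{n}_{+},\dalfa\psi^{n}_{+})|\le|\psi^{n}_{+}|^{2}$ — so that this term is genuinely nonnegative — rather than with the crude $|(\psi,\dalfa\psi)|\le\sqrt{3}\,|\psi|^{2}$, and if the Coulomb term is estimated through the projector inequality \eqref{eq:Tix}; this is exactly what brings the relevant constant down to $\cfstr Z\gamma_{T}<1$. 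The remaining ingredients — the cancellation of the cross terms, the identity for $\mult(\phi^{n})(2s_{n}-1)$, and the passage from $2s_{n}-1\ge-o(1)$ to a strict inequality via $\|\phi^{n}\|^{2}_{H^{1}}\ge\delta_{0}$ — are routine.
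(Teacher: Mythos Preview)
Your proof is correct and follows essentially the same route as the paper's: bound $\phi_{1}^{n}$ trivially from the one--dimensionality of $W$, bound $\phi_{2}^{n}$ from the energy level, read off the Lagrange multiplier by testing with $\phi^{n}$, and then test with $\beta\phi^{n}$ (equivalently, with $(\phi_{1}^{n},0)$ and $(0,\phi_{2}^{n})$ separately) to obtain the key identity $\mult(\phi^{n})(2s_{n}-1)=\|\phi^{n}\|_{H^{1}}^{2}+P_{+}-P_{-}+o(1)$, after which the Tix inequality and the pointwise bound $|J_{\eta}|\le\rho_{\eta}$ finish the job. The only cosmetic differences are that the paper uses Tix's inequality \eqref{eq:key_estimate} rather than Kato's \eqref{eq:KatoConvoluta} in the boundedness step (both give a coefficient strictly below $1$), and that the paper concludes simply from $\mult(\phi^{n})>0$ and $\mult(\phi^{n})(|\psi_{+}^{n}|^{2}-|\psi_{-}^{n}|^{2})>0$, whereas you extract a quantitative lower bound $2s_{n}-1\ge\kappa_{0}\delta_{0}/M-o(1)$; your version is in fact closer to what is needed later in Proposition~\ref{prop:PalaisSmale2}(i).
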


\begin{proof}
	We let $\phi^{n} = \left( \begin{smallmatrix} \phi^{n}_{1} \\
	\phi^{n}_{2} \end{smallmatrix} \right)$.  Since $\phi_{1}^{n} \in
	W$, $W$ one dimensional, and $0 < |(\phi_{1}^{n})_{tr} |^2_{L^2}
	\leq 1$ we have $\norm{\phi_{1}^{n}} \leq c_{W}$ for some constant
	(depending on $W$).
	
	Let us denote $\psi_{+}^{n} = \fw^{-1} \left(
	\begin{smallmatrix} (\phi_{1}^{n})_{tr} \\ 0 \end{smallmatrix}
	\right)$, $\psi_{-}^{n} = \fw^{-1} \left(
	\begin{smallmatrix} 0 \\ (\phi_{2}^{n})_{tr} \end{smallmatrix}
	\right)$ and $\psi^{n} = \psi_{+}^{n} + \psi_{-}^{n}$.  In view of
	Remarks \ref{rem:Tix_convolution} and \ref{rem:pos_current_term}
	we have, for $n$ large enough,
	\begin{align*} 
		c + \epsilon_{n} &= \mathcal{I}(\phi^{n}) \leq \|\phi_{1}^{n}
		\|^2_{H^1} - \|\phi_{2}^{n} \|^2_{H^1}
		+ \cfstr \int_{R^3 \times \R^3} \frac{\rho_{\psi^{n}}
		(y) ( \rho_{\psi_{+}^{n}} + \rho_{\psi_{-}^{n}})(z)}{|y-z|} \,
		dy dz\\
		&\leq (1 + \cfstr\gamma_{T}) \|\phi_{1}^{n} \|^2_{H^1} - (1 -
		\cfstr\gamma_{T}) \|\phi_{2}^{n} \|^2_{H^1}
	\end{align*}
	Hence we may conclude that
	\begin{equation*}
		\|\phi_{1}^{n} \|^2_{H^1} \leq c_{W}, \qquad \|\phi_{2}^{n}
		\|^2_{H^1} \leq \frac{1 + \cfstr\gamma_{T}}{1 -
		\cfstr\gamma_{T}} \|\phi_{1}^{n} \|^2_{H^1}
	\end{equation*}
	and also
	\begin{equation*}
		\|\phi_{1}^{n} \|^2_{H^1}+ \|\phi_{2}^{n} \|^2_{H^1} \leq
		\frac{2c_{W}}{1 - \cfstr\gamma_{T}}
	\end{equation*}
	In particular we deduce that the any Palais-Smale sequence is
	bounded in $H^{1}$.
	
	Then we have
	\begin{align*}
		\langle \nabla_{\mathcal{X}_{W}} \mathcal{I}(\phi^{n}),
		\phi^{n} \rangle &= d\mathcal{I}(\phi^{n})[\phi^{n}] -
		\mult(\phi^{n}) 2 |(\phi^{n})_{tr} |^2_{{L^{2}} } \\
		&= 2 \mathcal{I}(\phi^{n})  - 2 \mult(\phi^{n}) \\
		& \qquad + \cfstr \iint_{\R^3 \times \R^3}
		\frac{\rho_{\psi^{n}}(y) \rho_{\psi^{n}}(z) - J_{\psi^{n}}(y)
		\cdot J_{\psi^{n}}(z)}{|y-z|} \, dy dz
	\end{align*}
	and we deduce that
	\begin{multline}
		\label{eq:moltiplicatore}
		\mult(\phi^{n}) = c + \epsilon_{n} + \langle
		\nabla_{\mathcal{X}_{W}} \mathcal{I}(\phi^{n}), \phi^{n}
		\rangle \\
		+ \frac{\cfstr}{2} \iint_{\R^3 \times \R^3}
		\frac{\rho_{\psi^{n}}(y)\rho_{\psi^{n}}(z) - J_{\psi^{n}}(y)
		\cdot J_{\psi^{n}}(z)}{|y-z|} \, dy dz.
	\end{multline}
	and
	\begin{equation}
		\mult(\phi^{n}) > 0
	\end{equation}
	for $n$ large enough since the last term is non negative and
	$\langle \nabla_{\mathcal{X}_{W}} \mathcal{I}(\phi^{n}), \phi^{n}
	\rangle \to 0$.

	Moreover since $\norm{\phi^{n}}_{H^{1}}$ is bounded we have
	\begin{equation*}
		o(1)= d\mathcal{I}(\phi^{n})[\beta \phi^{n}] - \mult(\phi^{n})
		2 \RE \langle (\phi^{n})_{tr} |
		(\beta\phi^{n})_{tr}\rangle_{{L^{2}}},
	\end{equation*}
	and observing that
	\begin{equation*}
		\RE (\psi_{+}(y) + \psi_{-}(y), \psi_{+}(y) - \psi_{-}(y)) =
		\abs{\psi_{+}(y)}^{2} - \abs{\psi_{-}(y)}^{2} =
		\rho_{\psi_{+}}(y) - \rho_{\psi_{-}}(y),
	\end{equation*}
	we deduce that
	\begin{align*}
		\mult(\phi^{n}) |\psi_{+}^{n} |^2_{L^2} &+ o(1) =
		\mult(\phi^{n}) |\psi_{-}^{n} |^2_{L^2} + \frac{1}{2}
		d\mathcal{I}(\phi^{n})[\beta \phi^{n}] \\
		&= \mult(\phi^{n}) |\psi_{-}^{n} |^2_{L^2} + \|\phi_{1}^{n}
		\|^2_{H^1} + \|\phi_{2}^{n} \|^2_{H^1} + \cfstr \int_{\R^3} V
		\rho_{\psi_{+}}^{n} \, dy \\
		&\quad - \cfstr \int_{\R^3} V \rho_{\psi_{-}^{n}} \, dy \\
		&\quad + \cfstr \iint_{\R^3 \times \R^3}
		\frac{\rho_{\psi^{n}}(y) ( \rho_{\psi_{+}^{n}} -
		\rho_{\psi_{-}}^{n})(z)}{ |y-z|} \, dy dz \\
		&\quad - \cfstr \iint_{\R^3 \times \R^3} \frac{J_{\psi^{n}}(y)
		\cdot( J_{\psi_{+}^{n}} - J_{\psi_{-}^{n}}) (z)}{ |y-z|} \, dy
		dz \\
		&\geq \mult(\phi^{n}) |\psi_{-}^{n} |^2_{L^2} + (1 -
		Z \cfstr \gamma_T) \|\phi_{1}^{n} \|^2_{H^1} + \|\phi_{2}^{n}
		\|^2_{H^1} \\
		&\quad - \cfstr \iint_{\R^3 \times \R^3}
		\frac{(\rho_{\psi^{n}} + |J_{\psi^{n}}|)(y)
		\rho_{\psi_{-}^{n}}(z)}{ |y-z|} \, dy dz \\
		&\quad + \cfstr \iint_{\R^3 \times \R^3}
		\frac{(\rho_{\psi^{n}}- |J_{\psi^{n}}|)(y)
		\rho_{\psi_{+}^{n}}(z)}{ |y-z|} \, dy dz \\
		&\geq \mult(\phi^{n}) |\psi_{-}^{n} |^2_{L^2} + (1 -
		Z \cfstr \gamma_T) \|\phi_{1}^{n} \|^2_{H^1} + (1 - 2\cfstr
		\gamma_{T}) \|\phi_{2}^{n} \|^2_{H^1}\\
		&> \mult(\phi^{n}) |\psi_{-}^{n} |^2_{L^2},
	\end{align*}
	where we have used the estimate \eqref{eq:key_estimate}.  We
	immediately deduce, since $\mult(\phi^{n}) > 0$ for $n$ large
	enough, that $|\psi_{+}^{n} |^2_{L^2} > |\psi_{-}^{n} |^2_{L^2}$
	which implies that $|\psi_{+}^{n} |^2_{L^2} > \frac{1}{2}$.
\end{proof}

We now introduce the maximization problem
\begin{equation} 
	\label{eq:maximum_pb}
	\lambda_{W} = \sup_{ \phi \in \mathcal{X}_{W} } \mathcal{I}(\phi),
\end{equation}
and we show that $\lambda_{W}$ is positive.
\begin{lem}
	\label{lem:apriori_estimates}
	Fix any $w \in H^{1}(\R^4_{+}; \mathbb{C}^2)$ and let $ W = \spann
	\{ w \} $.  If $w_{tr} \equiv 0 $ then $ \sup_{ \phi \in
	\mathcal{X}_{W} } \mathcal{I}(\phi) = + \infty $; on the other
	hand for $w_{tr} \not\equiv 0 $ then
	\begin{equation}
		\label{eq:p1}
		\sup_{ \phi \in \mathcal{X}_{W} } \mathcal{I}(\phi) =
		\lambda_{_{W}} \in ( 0, + \infty) .
	\end{equation}
\end{lem}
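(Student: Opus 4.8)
The plan is to treat the two alternatives separately; the second rests on the ingredients already used in Lemma~\ref{lem:PalaisSmale}, namely the splitting $\psi=\fw^{-1}\phi_{tr}=\Lambda_{+}\psi+\Lambda_{-}\psi$ with $\Lambda_{+}\psi=\fw^{-1}\bigl((\phi_{1})_{tr},0\bigr)$, $\Lambda_{-}\psi=\fw^{-1}\bigl(0,(\phi_{2})_{tr}\bigr)$, the estimate \eqref{eq:key_estimate}, the positivity relations \eqref{eq:correntePositiva}--\eqref{eq:stimaGES}, and Lemma~\ref{lem:betterminimizer}. Consider first $w_{tr}\equiv 0$. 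Since $W$ is one-dimensional, $w\not\equiv 0$ in $H^{1}$, hence $\norm{w}_{H^{1}}>0$. Pick any $\phi_{2}\in H^{1}(\R^{4}_{+};\C^{2})$ with $\abs{(\phi_{2})_{tr}}^{2}_{L^{2}}=1$ (the trace operator maps $H^{1}$ onto $H^{1/2}$, cf.~Lemma~\ref{lem:betterminimizer}), and for $t\in\C$ set $\phi^{t}=(tw,\phi_{2})$. Then $\phi^{t}_{tr}=\bigl(0,(\phi_{2})_{tr}\bigr)$, so $\phi^{t}\in\mathcal{X}_{W}$ and $\psi=\fw^{-1}\phi^{t}_{tr}$ does not depend on $t$; therefore
\begin{equation*}
	\mathcal{I}(\phi^{t})=\abs{t}^{2}\norm{w}^{2}_{H^{1}}+C,\qquad C\in\R\ \text{independent of }t,
\end{equation*}
and letting $\abs{t}\to\infty$ gives $\sup_{\mathcal{X}_{W}}\mathcal{I}=+\infty$.

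Assume now $w_{tr}\not\equiv 0$. The first step is an a priori bound on $\phi_{1}$: for any $\phi=(tw,\phi_{2})\in\mathcal{X}_{W}$ one has $1=\abs{\phi_{tr}}^{2}_{L^{2}}=\abs{t}^{2}\abs{w_{tr}}^{2}_{L^{2}}+\abs{(\phi_{2})_{tr}}^{2}_{L^{2}}$, hence
\begin{equation*}
	\norm{\phi_{1}}_{H^{1}}=\abs{t}\,\norm{w}_{H^{1}}\le c_{W}:=\frac{\norm{w}_{H^{1}}}{\abs{w_{tr}}_{L^{2}}}<+\infty .
\end{equation*}
For the finiteness of $\lambda_{W}$, since $V\le 0$ and the current self-energy is nonnegative by \eqref{eq:correntePositiva}, I discard those two terms and, exactly as in Lemma~\ref{lem:PalaisSmale}, use $\rho_{\psi}(z)\le 2\bigl(\abs{\Lambda_{+}\psi(z)}^{2}+\abs{\Lambda_{-}\psi(z)}^{2}\bigr)$, $\abs{\rho_{\psi}}_{L^{1}}=\abs{\psi}^{2}_{L^{2}}=1$, and \eqref{eq:key_estimate} to obtain
\begin{equation*}
	\mathcal{I}(\phi)\le\norm{\phi_{1}}^{2}_{H^{1}}-\norm{\phi_{2}}^{2}_{H^{1}}+\cfstr\gamma_{T}\bigl(\abs{\Lambda_{+}\psi}^{2}_{H^{1/2}}+\abs{\Lambda_{-}\psi}^{2}_{H^{1/2}}\bigr).
\end{equation*}
Since $\fw^{-1}$ is an isometry of $H^{1/2}$ (recall $U(p)$ is unitary for every $p$) and $\abs{(\phi_{i})_{tr}}^{2}_{H^{1/2}}\le\norm{\phi_{i}}^{2}_{H^{1}}$ by \eqref{eq:normaTracciaNormaH1}, the right-hand side is bounded by $(1+\cfstr\gamma_{T})\norm{\phi_{1}}^{2}_{H^{1}}-(1-\cfstr\gamma_{T})\norm{\phi_{2}}^{2}_{H^{1}}\le(1+\cfstr\gamma_{T})c_{W}^{2}$, because $\cfstr\gamma_{T}<1$; thus $\lambda_{W}<+\infty$.

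For positivity it is enough to exhibit one competitor of positive energy. Take $\phi=\bigl(\,\abs{w_{tr}}_{L^{2}}^{-1}w,\,0\,\bigr)\in\mathcal{X}_{W}$, so that $\psi=\fw^{-1}\bigl((\phi_{1})_{tr},0\bigr)=\Lambda_{+}\psi$ and $\Lambda_{-}\psi=0$. By \eqref{eq:stimaGES} the Coulomb-minus-current self-energy is nonnegative, while by \eqref{eq:stimaPotenzialeTix}
\begin{equation*}
	\cfstr\int_{\R^{3}}V\rho_{\psi}\,dy=-\cfstr\int_{\R^{3}}\abs{V}\,\abs{\Lambda_{+}\psi}^{2}\,dy\ge -Z\cfstr\gamma_{T}\,\abs{\Lambda_{+}\psi}^{2}_{H^{1/2}}\ge -Z\cfstr\gamma_{T}\,\norm{\phi_{1}}^{2}_{H^{1}} .
\end{equation*}
Hence $\mathcal{I}(\phi)\ge(1-Z\cfstr\gamma_{T})\norm{\phi_{1}}^{2}_{H^{1}}>0$, since $Z\cfstr\gamma_{T}<1$ (as $Z\le 124$) and $\phi_{1}\neq 0$. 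Therefore $\lambda_{W}\ge\mathcal{I}(\phi)>0$, which together with the previous paragraph gives \eqref{eq:p1}.

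The only genuine difficulty is the finiteness step: a priori the focusing Coulomb self-energy $\frac{\cfstr}{2}\iint_{\R^{3}\times\R^{3}}\rho_{\psi}(y)\rho_{\psi}(z)\abs{y-z}^{-1}\,dy\,dz$ scales like $\norm{\phi}^{2}_{H^{1}}$ and could overcome the indefinite quadratic part, so that the supremum over the noncompact factor $\phi_{2}$ becomes infinite. What saves the argument is that $\abs{\rho_{\psi}}_{L^{1}}=1$ on $\mathcal{X}_{W}$, so splitting $\rho_{\psi}$ along $\Lambda_{\pm}$ and applying \eqref{eq:key_estimate} bounds that energy by $\cfstr\gamma_{T}\bigl(\norm{\phi_{1}}^{2}_{H^{1}}+\norm{\phi_{2}}^{2}_{H^{1}}\bigr)$ with the small constant $\cfstr\gamma_{T}<1$; the coefficient of $\norm{\phi_{2}}^{2}_{H^{1}}$ then stays strictly negative and the surviving positive term is controlled by the $W$-dependent bound $c_{W}$.
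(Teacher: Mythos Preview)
Your proof is correct and follows essentially the same route as the paper: for $w_{tr}\equiv 0$ you exploit that the boundary terms are independent of the $W$-component, and for $w_{tr}\not\equiv 0$ you combine the Tix-type bound \eqref{eq:key_estimate} (with $\abs{\rho_\psi}_{L^1}=1$) with \eqref{eq:correntePositiva} for the upper bound and \eqref{eq:stimaGES}--\eqref{eq:stimaPotenzialeTix} on the competitor $(\abs{w_{tr}}_{L^2}^{-1}w,0)$ for positivity. The only cosmetic difference is that the paper normalizes $\abs{w_{tr}}_{L^2}=1$ from the outset and invokes \eqref{eq:stimaNormaL2NormaH1} to get the quantitative lower bound $\lambda_W\ge 1-Z\cfstr\gamma_T$, whereas you only assert $\lambda_W>0$; the weaker conclusion is all the lemma claims.
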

	
\begin{proof}
	If $w_{tr} \equiv 0 $ we take a sequence $\phi_n = \left(
	\begin{smallmatrix} a_n w\\
	\phi_2\end{smallmatrix} \right) \in \mathcal{X}_{W} $ with $ |a_n|
	\to + \infty$, for $n \to + \infty$, and a fixed $\phi_2 \in H^1$
	such that $|(\phi_2)_{tr} |^2_{L^2}= 1$.  We denote $\psi_{-} =
	\fw^{-1} \left( \begin{smallmatrix} 0\\
	(\phi_2)_{tr}\end{smallmatrix} \right)$.  Then by
	\eqref{eq:stimaGES} we have
	\begin{align*}
		\sup_{ \phi \in \mathcal{X}_{W} } \mathcal{I}(\phi) & \geq
		\mathcal{I}(\phi_n) \geq |a_n|^2 \|w\|^2_{H^1} - \|\phi_2
		\|^2_{H^1} + \cfstr \int_{\R^3} V \rho_{\psi_{-}}(y) \, dy \\
		& \geq |a_n|^2 \|w\|^2_{H^1} - C \to + \infty \qquad \text{as
		} \,\, n \to + \infty.
	\end{align*}
	for some constant $C>0$ independent on $n \in \N$.

	Fix now $w \in H^{1}(\R^4_{+}, \mathbb{C}^2)$ with $|w_{tr}|_{L^2}
	=1$.  Denote $ W = \spann \{\left( \begin{smallmatrix} w\\
	0\end{smallmatrix} \right) \} $, then $\phi = \left(
	\begin{smallmatrix} \phi_1 \\ \phi_2 \end{smallmatrix} \right) \in
	\mathcal{X}_{W} $ is given by $ \phi_1 = a w$ , $a \in \C $ and $
	|\phi_{tr}|_{L^2}^2= |a|^2 + |(\phi_{2})_{tr} |_{L^2}^2 = 1$.
	Denote $v_{+} = \fw^{-1} \left( \begin{smallmatrix} w_{tr} \\
	0\end{smallmatrix} \right) $, $\psi_{+} = a v_{+}$, $\psi_{-} =
	\fw^{-1} \left( \begin{smallmatrix} 0 \\
	(\phi_2)_{tr}\end{smallmatrix} \right) $ and $ \psi= \psi_{+}+
	\psi_{-}$.

	Since $ \lambda_{W} = \sup_{ \phi \in \mathcal{X}_{W} }
	\mathcal{I}(\phi) \geq \mathcal{I} (\left( \begin{smallmatrix} w\\
	0\end{smallmatrix} \right)) $, by \eqref{eq:stimaGES},
	\eqref{eq:stimaPotenzialeTix}, \eqref{eq:normaTracciaNormaH1} and
	\eqref{eq:stimaNormaL2NormaH1}
    \begin{multline*}
		\mathcal{I} (\left( \begin{smallmatrix} w\\ 0\end{smallmatrix}
		\right)) \geq \|w \|^2_{H^1} + \cfstr \int_{\R^3} V
		\rho_{v_{+}} \, dy \geq \|w \|^2_{H^1} - Z \cfstr \gamma_T
		\abs{\fw^{-1}w_{\text{tr}}}^2_{H^{1/2}} \\
		= \|w \|^2_{H^1} - Z \cfstr \gamma_T \abs{w_{\text{tr}}}^2_{H^{1/2}}
		\geq (1-Z \cfstr \gamma_T) \|w \|^2_{H^1} \geq (1- Z \cfstr \gamma_T).
	\end{multline*}
	hence $ \lambda_{W} > 0$.
	
	Moreover, in view of \eqref{eq:correntePositiva},
	\eqref{eq:key_estimate} and recalling that
	$\abs{\psi_{-}}_{H^{1/2}} = \abs{(\phi_{2})_{\text{tr}}}_{H^{1/2}}
	\leq \norm{\phi_{2}}_{H^{1}}$ by Lemma \ref{lem:betterminimizer}
	and that $\abs{\rho_{\psi}}_{L^{1}} = \abs{\psi}_{L^{2}} =
	\abs{\phi_{\text{tr}}}_{L^{2}} = 1$, for any $\phi \in
	\mathcal{X}_{W} $ we have
	\begin{align*}
		\mathcal{I} (\phi) \leq & \| \phi_1 \|^2_{H^1} - \|\phi_2
		\|^2_{H^1} + \frac{\cfstr}{2} \iint_{\R^3 \times \R^3 } \frac{
		\rho_{\psi} (y) \rho_{\psi}(z) } { |y-z | } \, dy \, dz\\
		\leq & |a|^2 \| w \|^2_{H^1}- \|\phi_2 \|^2_{H^1} + \cfstr
		\iint_{\R^3 \times \R^3 } \frac{ \rho_{\psi} (y) (
		\rho_{\psi_{-}} + \rho_{\psi_{+}} ) (z) } { |y-z | } \, dy \,
		dz\\
		\leq & |a|^2 \| w \|^2_{H^1}- \|\phi_2 \|^2_{H^1} +
		\cfstr\gamma_{T} |\rho_{\psi} |_{L^1} ( |\psi_{-}
		|^2_{H^{1/2}} + |a|^2 | v_{+} |^2_{H^{1/2}} ) \\
		\leq &- (1 - \cfstr\gamma_{T}) \|\phi_2 \|^2_{H^1} + C_{W}
	\end{align*}	
	hence in particular $\sup_{ \phi \in \mathcal{X}_{W} }
	\mathcal{I}(\phi) \leq C_{W}$ for some constant $C_{W} >0$
	depending only on $W$.
\end{proof}             

Fix now $w \in H^{1}(\R^4_{+}, \mathbb{C}^2)$ with $|w_{tr}|_{L^2}
=1$, to obtain additional information on the the maximization problem
\eqref{eq:maximum_pb} we introduce the (constraint) functional
$\mathcal{J}_{W} \colon B_{1} \to \R$ given by
\begin{equation*}
	\mathcal{J}_{W}(u) = \mathcal{I} \left(\left( \begin{smallmatrix}
	a ( u) w\\ u \end{smallmatrix} \right) \right)
\end{equation*}
where 
\begin{equation*}
	B_{1} = \settc{u \in X_{-} = H^{1}(\R^4_{+},
	\mathbb{C}^2)}{\abs{u_{\text{tr}}}_{L^{2}} < 1}
\end{equation*}
$a ( u) $ is given by the constrain equation $ |a w_{tr}
|^2_{L^2} + |u_{tr} |^2_{L^2} = 1 $ that is $ |a|^2 = 1 - |u_{tr}
|^2_{L^2} $.  By the phase invariance, without loss of generality, we
can always assume that $ a ( u) = \sqrt{ 1 - |u_{tr} |^2_{L^2}}$.
   
We have for any $h \in H^{1}(\R^4_{+}, \mathbb{C}^2)$,
\begin{equation*}
	d \mathcal{J}_{W}(u)[h] = d \mathcal{I} \left(\left(
	\begin{smallmatrix} a ( u) w\\ u \end{smallmatrix} \right) \right)
	\left[ \left(\begin{smallmatrix} da ( u) [h] w\\ h
	\end{smallmatrix} \right)\right]
\end{equation*}
and for $h, k \in H^1$
\begin{align}
	d^2 \mathcal{J}_{W}(u)[h ; k] &= Q_{1}[h; k] + Q_{2}[h; k]
	\nonumber \\
	&=d \mathcal{I} \left( \left( 
	\begin{smallmatrix} 
		a ( u) w\\ u
	\end{smallmatrix} 
	\right) \right) \left[ \left(
	\begin{smallmatrix}
		d^2 a ( u) [h;k] w\\ 0
	\end{smallmatrix} \right)\right] \label{eq:Q1}\\
	& \qquad + d^2 \mathcal{I} \left(\left( 
	\begin{smallmatrix} 
		a ( u) w\\ u 
	\end{smallmatrix} 
	\right) \right) \left[ \left(
	\begin{smallmatrix} 
		da ( u) [h] w\\ h 
	\end{smallmatrix}
	\right) ; \left( 
	\begin{smallmatrix} 
		da ( u) [k] w\\ k
	\end{smallmatrix} 
	\right) \right]\label{eq:Q2}
\end{align}
where, setting $\eta = \fw^{-1} \left( \begin{smallmatrix} 0 \\
u_{tr}\end{smallmatrix} \right) $, $\xi= \fw^{-1} \left(
\begin{smallmatrix} 0 \\ h_{tr} \end{smallmatrix} \right)$, we have
$a(u) = \sqrt{ 1 - |\eta |^2_{L^2}}$,
\begin{equation*}
	da (u) [h] = - \frac{\RE \langle \eta| \xi \rangle_{L^2}} { \sqrt{
	1 - |\eta |^2_{L^2}}} = - a(u) \frac{\RE \langle \eta | \xi
	\rangle_{L^2}} { 1 - |\eta |^2_{L^2}}
\end{equation*} 
and
\begin{equation*}
	d^2a (u) [h; h]= - a(u) \frac{ |\xi |^2_{L^2}} {1 - |\eta
	|^2_{L^2}} - a(u) \left( \frac{\RE \langle \eta |\xi
	\rangle_{L^2}} { 1 - |\eta |^2_{L^2}} \right)^2
\end{equation*} 
Setting $v_+ = \fw^{-1} \left( \begin{smallmatrix} w_{tr} \\ 0
\end{smallmatrix} \right) $, $\phi = \left( \begin{smallmatrix} a ( u)
w\\ u \end{smallmatrix} \right) $ and $\psi = \fw^{-1} \phi$ we have
\begin{align*}
	\frac{1}{2} &d \mathcal{J}_{W}(u)[ h] = - \RE \langle \eta | \xi
	\rangle_{L^2} \| w \|^2_{H^1} - \RE \langle u |h \rangle_{H^1} \\
	&\quad - \cfstr \RE \langle \eta | \xi \rangle_{L^2} \int_{\R^3} V
	\rho_{ v_+} \, dy + \cfstr \int_{\R^3} V \RE (\eta, \xi) \, dy \\
	&\quad + \cfstr \int_{\R^3} V \RE(a(u) v_+, \xi) \, dy - \cfstr
	\frac{ \RE \langle \eta | \xi \rangle_{L^2} } {1 - |\eta
	|^2_{L^2}} \int_{\R^3} V \RE(a(u)v_+, \eta)\, dy \\
	&\quad - \cfstr \RE \langle \eta | \xi \rangle_{L^2} \iint_{\R^3
	\times \R^3} \frac{\rho_{\psi} (y) \rho_{v_+}(z) - J_{\psi} (y)
	\cdot J_{v_+}(z) }{|y-z|} \, dy dz \\
	&\quad + \cfstr \iint_{\R^6} \frac{\rho_{\psi} (y)
	\RE(\eta, \xi) (z) - J_{\psi} (y) \cdot (\eta, \dalfa
	\xi)(z) }{ |y-z|} \, dy dz \\
	&\quad +\cfstr \iint_{\R^6} \frac{\rho_{\psi} (y) \RE( a(u)v_+,
	\xi)(z) - J_{\psi} (y) \cdot \RE(a(u)v_+, \dalfa \xi)(z) }{ |y-z|}
	\, dy dz \\
	&\quad - \cfstr \frac{\RE \langle \eta | \xi \rangle_{L^2} } { 1 -
	|\eta |^2_{L^2}} \iint_{\R^6} \frac{\rho_{\psi} (y)
	\RE(a(u)v_+, \eta)(z) - J_{\psi} (y) \cdot \RE(a(u) v_+,
	\dalfa \eta)(z) }{|y-z|} \, dy dz
\end{align*}
It is convenient to define, for any $\nu \in H^{1/2}$,
\begin{equation}
	\label{eq:Gamma}
	\begin{split}
		\Gamma_{\psi}& (\nu) = \cfstr \int_{\R^3} V \RE(a(u) v_+,
		\nu) \, dy \\
		& + \cfstr \iint_{\R^3 \times \R^3} \frac{\rho_{\psi} (y)
		\RE(a(u)v_+, \nu)(z) - J_{\psi} (y) \cdot \RE(a(u) v_+, \dalfa
		\nu)(z) }{|y-z|} \, dy dz
	\end{split}
\end{equation}
where $\psi = a(u) v_+ + \eta$.  Remark that we have
\begin{equation*}
	\abs{\Gamma_{\psi}(\nu)} \leq C \abs{\psi}_{H^{1/2}}
	\abs{\nu}_{H^{1/2}}
\end{equation*}
and
\begin{align*}
	\frac{1}{2} d \mathcal{J}_{W}(u)[ h] & = - \RE \langle \eta | \xi
	\rangle_{L^2} \| w \|^2_{H^1} - \RE \langle u |h \rangle_{H^1} \\
	&\qquad - \cfstr \RE \langle \eta | \xi \rangle_{L^2} \int_{\R^3}
	V \rho_{ v_+} \, dy + \cfstr \int_{\R^3} V \RE (\eta, \xi) \, dy
	\\
	&\qquad - \cfstr \RE \langle \eta | \xi \rangle_{L^2} \iint_{\R^3
	\times \R^3} \frac{\rho_{\psi} (y) \rho_{v_+}(z) - J_{\psi} (y)
	\cdot J_{v_+}(z) }{|y-z|} \, dy dz \\
	&\qquad + \cfstr \iint_{\R^3 \times \R^3} \frac{\rho_{\psi} (y)
	\RE(\eta, \xi) (z) - J_{\psi} (y) \cdot (\eta, \dalfa
	\xi)(z) }{|y-z|} \, dy dz \\
	&\qquad + \Gamma_{\psi} (\xi) - \frac{ \RE \langle \eta | \xi
	\rangle_{L^2} } { 1 - |\eta |^2_{L^2}} \Gamma_{\psi} (\eta)
\end{align*}
       
In particular, for $h = u$, that is $ \xi = \eta$, we have
\begin{align*}
	\frac{1}{2} d \mathcal{J}_{W}(u) [ u ] = & - |\eta|^2_{L^2} \| w
	\|^2_{H^1} - \|u \|^2_{H^1} - \cfstr |\eta|^2_{L^2} \int_{\R^3} V
	\rho_{ v_+} \, dy + \cfstr \int_{\R^3} V \rho_{\eta} \, dy \\
	&\qquad - \cfstr |\eta|^2_{L^2} \iint_{\R^3 \times \R^3}
	\frac{\rho_{\psi} (y) \rho_{v_+}(z) - J_{\psi} (y) \cdot
	J_{v_+}(z) }{|y-z|} \, dy dz \\
	&\qquad + \cfstr \iint_{\R^3 \times \R^3} \frac{\rho_{\psi} (y)
	\rho_{\eta}(z) - J_{\psi} (y) \cdot J_{\eta}(z) }{|y-z|} \, dy
	dz \\
	&\qquad + \left(1 - \frac{ |\eta|^2_{L^2}} {1- |\eta|^2_{L^2}}
	\right) \Gamma_{\psi} (\eta)
\end{align*}
hence we get
\begin{equation}
	\label{eq:stima_grad}
	\frac{1}{2} d \mathcal{J}_{W}(u)[ u] \leq - |\eta|^2_{L^2} (1-
	Z \cfstr \gamma_T) \| w \|^2_{H^1} - (1 - 2\cfstr \gamma_{T})\|u
	\|^2_{H^1} + \frac{1- 2 |\eta|^2_{L^2}} {1- |\eta|^2_{L^2}}
	\Gamma_{\psi} (\eta).
\end{equation}
and in particular for all $\abs{\eta}_{L^{2}} = \frac{1}{2}$ we have
\begin{equation}
	\label{eq:stima_gradPalla}
	\frac{1}{2} d \mathcal{J}_{W}(u)[ u] \leq - |\eta|^2_{L^2} (1-
	Z \cfstr \gamma_T) \| w \|^2_{H^1} - (1 - 2\cfstr \gamma_{T})\|u
	\|^2_{H^1} < 0
\end{equation}

\begin{prop}
	\label{prop:PalaisSmale2}
	Let $\{u_{n} \} \subset B_{1} \subset H^{1}(\R^4_{+},
	\mathbb{C}^2)$ be a Palais Smale sequence for $\mathcal{J}_{W}$,
	i.e. such that $ \| d\mathcal{J}_{W}(u_n) \| \to 0$ and
	$\mathcal{J}_{W}(u_n) \to c > 0$.
	
	Then,
	\begin{itemize}
		\item[(i)] there exists $\kappa >0 $ such that
		$|\eta_n |^2_{L^2} \leq \frac{1}{2} - \kappa$ for 
		all $n$ large enough;
                              
		\item[(ii)] $\{ u_n \}$ is precompact in $H^1$.
	\end{itemize} 
	where $\eta_{n} = (u_{n})_{\text{tr}}$.
\end{prop}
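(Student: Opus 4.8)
The idea is to view $\mathcal{J}_W$ as $\mathcal{I}$ restricted to the manifold $\mathcal{X}_W$ and then argue by contradiction for (i) and by a concentration--compactness splitting for (ii). For (i), note that $\Phi\colon u\mapsto\bigl(\begin{smallmatrix}a(u)w\\u\end{smallmatrix}\bigr)$ embeds $B_1$ into $\mathcal{X}_W$ with $\mathcal{I}(\Phi(u))=\mathcal{J}_W(u)$, the range of $d\Phi(u)\colon k\mapsto\bigl(\begin{smallmatrix}da(u)[k]w\\k\end{smallmatrix}\bigr)$ sits inside $T_{\Phi(u)}\mathcal{X}_W$ with $H^1$--norm $\ge\norm{k}_{H^1}$, and the only other tangent direction $(iw,0)$ is real--orthogonal to both $\Phi(u)$ and $\beta\Phi(u)$; hence the proof of Lemma~\ref{lem:PalaisSmale} (which tests $\nabla_{\mathcal{X}_W}\mathcal{I}$ only against $\phi^n$ and $\beta\phi^n$) applies to $\{\Phi(u_n)\}$, giving that $\{u_n\}$ is bounded in $H^1$ and $\abs{\eta_n}^2_{L^2}=1-\abs{a(u_n)}^2<\tfrac12$. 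To rule out $\abs{\eta_n}^2_{L^2}\to\tfrac12$ along a subsequence: boundedness and $\norm{d\mathcal{J}_W(u_n)}\to0$ give $d\mathcal{J}_W(u_n)[u_n]\to0$, while in \eqref{eq:stima_grad} the factor $\tfrac{1-2\abs{\eta_n}^2_{L^2}}{1-\abs{\eta_n}^2_{L^2}}\to0$ and $\abs{\Gamma_{\psi_n}(\eta_n)}\le C\abs{\psi_n}_{H^{1/2}}\abs{\eta_n}_{H^{1/2}}$ stays bounded, so
\begin{equation*}
  \limsup_n\ \tfrac12\,d\mathcal{J}_W(u_n)[u_n]\ \le\ -\tfrac12\,(1-Z\cfstr\gamma_T)\,\norm{w}^2_{H^1}\ <\ 0
\end{equation*}
($Z<124$ forces $Z\cfstr\gamma_T<1$ and $\norm{w}^2_{H^1}\ge\abs{w_{tr}}^2_{L^2}=1$), a contradiction. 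Hence $\limsup_n\abs{\eta_n}^2_{L^2}<\tfrac12$, which is (i).

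For (ii), by (i) pass to a subsequence with $u_n\rightharpoonup u$ in $H^1$, $(u_n)_{tr}\rightharpoonup u_{tr}$ in $H^{1/2}$ and $a(u_n)\to a_\infty\ge\sqrt{\tfrac12+\kappa}$; then $\abs{u_{tr}}^2_{L^2}\le\tfrac12-\kappa$, so $u\in B_1$. Write $u_n=u+\widetilde r_n$ with $\widetilde r_n\rightharpoonup0$, set $r_n:=\fw^{-1}\bigl(\begin{smallmatrix}0\\(\widetilde r_n)_{tr}\end{smallmatrix}\bigr)\rightharpoonup0$ in $H^{1/2}$, and note $\psi_n:=\fw^{-1}\Phi(u_n)_{tr}=\sigma_n+r_n$ with $\sigma_n:=a(u_n)v_+ +\eta\to\psi_\infty:=a_\infty v_+ +\eta$ \emph{strongly} in $H^{1/2}$ (the $\Lambda_+$--component is a scalar multiple of the fixed spinor $v_+$, so it neither escapes nor concentrates). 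The core is a splitting lemma: every integral coupling $r_n$ to $\sigma_n$ or to a fixed datum tends to $0$. This follows from \eqref{eq:convergenceto0} (using $v_+$, resp.\ $\eta$, as the fixed test element), from the compactness of the trace on bounded domains together with the uniform decay of $g\ast\tfrac1{\abs{\cdot}}$ for $g$ in bounded subsets of $L^1\cap L^{3/2}$, from Cauchy--Schwarz for the positive bilinear form $\langle f,g\rangle_\ast=\iint\frac{f(y)\bar g(z)}{\abs{y-z}}$ of Remark~\ref{rem:pos_current_term}, and --- near the origin --- from the commutator estimates of Lemma~\ref{lem:stima_commutatori}. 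In particular $\Gamma_{\psi_n}(r_n)\to0$, $\cfstr\int_{\R^3}V\,\RE(\psi_\infty,r_n)\to0$, and $\iint\frac{\rho_{\psi_n}(y)\rho_{r_n}(z)}{\abs{y-z}}=\iint\frac{\rho_{r_n}(y)\rho_{r_n}(z)}{\abs{y-z}}+o(1)$, with the analogous identity for the currents $J$.

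Using these relations (and $\RE\langle\eta_n\mid r_n\rangle_{L^2}=\abs{r_n}^2_{L^2}+o(1)$, $\RE\langle u_n\mid\widetilde r_n\rangle_{H^1}=\norm{\widetilde r_n}^2_{H^1}+o(1)$) one gets the decomposition $\mathcal{J}_W(u_n)=\mathcal{I}\bigl(\begin{smallmatrix}a_\infty w\\u\end{smallmatrix}\bigr)+B_n+o(1)$, where
\begin{equation*}
  B_n:=-\norm{\widetilde r_n}^2_{H^1}+\cfstr\!\int_{\R^3}\!V\abs{r_n}^2+\tfrac{\cfstr}{2}\!\iint_{\R^3\times\R^3}\!\frac{\rho_{r_n}^2-J_{r_n}\cdot J_{r_n}}{\abs{y-z}},
\end{equation*}
while $d\mathcal{J}_W(u_n)[\widetilde r_n]=o(1)$ (valid since $\norm{d\mathcal{J}_W(u_n)}\to0$) reduces, after the same substitutions, to $0=B_n+\tfrac{\cfstr}{2}\iint\frac{\rho_{r_n}^2-J_{r_n}\cdot J_{r_n}}{\abs{y-z}}-\abs{r_n}^2_{L^2}\,b_n+o(1)$, with $b_n=\norm{w}^2_{H^1}+\cfstr\int V\rho_{v_+}+\cfstr\iint\frac{\rho_{\psi_n}\rho_{v_+}-J_{\psi_n}\cdot J_{v_+}}{\abs{y-z}}+\tfrac{\Gamma_{\psi_n}(\eta_n)}{1-\abs{\eta_n}^2_{L^2}}$. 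By \eqref{eq:stimaGES} and \eqref{eq:key_estimate} one has $0\le\cfstr\iint\frac{\rho_{r_n}^2-J_{r_n}\cdot J_{r_n}}{\abs{y-z}}\le\cfstr\gamma_T\abs{r_n}^2_{L^2}\norm{\widetilde r_n}^2_{H^1}$ and $\cfstr\int V\abs{r_n}^2\le0$, so $B_n\le-(1-\tfrac{\cfstr\gamma_T}{2}\abs{r_n}^2_{L^2})\norm{\widetilde r_n}^2_{H^1}\le-\tfrac12\norm{\widetilde r_n}^2_{H^1}$; combining the two displayed identities then gives $\abs{r_n}^2_{L^2}b_n+\tfrac14\norm{\widetilde r_n}^2_{H^1}\le o(1)$, whence $\widetilde r_n\to0$ in $H^1$ --- and $\{u_n\}$ is precompact --- as soon as $b_n$ is bounded below (equivalently, no $L^2$--mass of the trace is lost), which one secures from \eqref{eq:Tix}, \eqref{eq:stimaGES}, the gap $\abs{\eta_n}^2_{L^2}\le\tfrac12-\kappa$ of (i), and the smallness of $\cfstr\gamma_T$.

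The step I expect to be the main obstacle is precisely this last one: the attractive Coulomb term $\cfstr\int V\abs{r_n}^2$ and the constraint--coupling term $\Gamma_{\psi_n}(\eta_n)$ must be controlled for a remainder $r_n\rightharpoonup0$ that a priori might concentrate at the Coulomb singularity or drift off to infinity, so that $b_n$ cannot degenerate. Its resolution rests on the negative sign of $V$, the Tix inequality \eqref{eq:Tix}, the inequality \eqref{eq:stimaGES}, the smallness of $\cfstr\gamma_T$, and the quantitative bound $\abs{a(u_n)}^2>\tfrac12+\kappa$ furnished by part~(i) (which in particular keeps $\abs{a(u_n)}$ bounded away from $0$ and $1$).
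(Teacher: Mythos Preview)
Your argument for (i) is correct and essentially identical to the paper's.

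For (ii) you take a genuinely different and heavier route. The paper does not set up an energy splitting $\mathcal{J}_W(u_n)=\mathcal{I}(\cdot)+B_n+o(1)$, does not invoke Lemma~\ref{lem:stima_commutatori}, and does not worry about concentration at the Coulomb singularity. Instead it extracts one piece of information \emph{first}: from \eqref{eq:stima_grad}, once (i) guarantees that $\tfrac{1-2|\eta_n|^2_{L^2}}{1-|\eta_n|^2_{L^2}}$ is bounded below by a positive constant, the relation $d\mathcal{J}_W(u_n)[u_n]=o(1)$ forces $\Gamma_{\psi_n}(\eta_n)\ge -C\|d\mathcal{J}_W(u_n)\|=o(1)$. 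This is exactly the lower bound you need for your $b_n$ --- and it is the step you single out as ``the main obstacle'' without quite closing it (the ingredients you list are the right ones, but the mechanism is another pass through \eqref{eq:stima_grad}, not Tix or \eqref{eq:stimaGES} directly). With this sign information in hand, the paper simply tests $d\mathcal{J}_W(u_n)$ against $u_n-u$: the terms coupling $v_+$ to $\eta_n-\eta$ vanish because $Vv_+\in H^{-1/2}$ and by \eqref{eq:convergenceto0}, and one is left with
\[
o(1)=\tfrac12\,d\mathcal{J}_W(u_n)[u_n-u]\le -(1-2\cfstr\gamma_T)\|u_n-u\|^2_{H^1}-\frac{|\eta_n-\eta|^2_{L^2}}{1-|\eta_n|^2_{L^2}}\,\Gamma_{\psi_n}(\eta_n)+o(1),
\]
the last term being $\le o(1)$ by the sign of $\Gamma_{\psi_n}(\eta_n)$. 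Strong convergence follows in two lines. Your splitting can be pushed through, but it buys nothing here: the problem is already ``local'' (the $\Lambda_+$ component lives in a one--dimensional space), so no dichotomy can occur, and the direct test against $u_n-u$ together with the a priori sign of $\Gamma_{\psi_n}(\eta_n)$ replaces your entire $B_n$/$b_n$ bookkeeping.
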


\begin{proof}
	It is clear that the sequence $\phi^{n} = \left( 
	\begin{smallmatrix} a(u_{n})w \\ u_{n} \end{smallmatrix}
	\right)$ is a Palais-Smale sequence for $\mathcal{I}$ restricted
	to the subspace $\mathcal{X}_{W}$.  We can then apply Lemma
	\ref{lem:PalaisSmale} to deduce
	that $\{ u_n \}$ is a bounded sequence in $H^1$ and
	$\abs{\eta_{n}} <\frac{1}{2}$.  We can assume that $u_n
	\rightharpoonup u$ weakly in $H^1$.
 
	We let $v_{+} = \fw^{-1} \left( \begin{smallmatrix} w_{tr} \\
	0\end{smallmatrix} \right) $,
	$a_n= a(u_{n}) \to a $ (up to subsequences) and $\psi_n = \fw^{-1}
	(\phi_n)_{tr} = a_n v_+ + \eta_{n}$.

	(i) Suppose on the contrary that $|\eta_n |^2_{L^2} \to
	\frac{1}{2}$.  Then from \eqref{eq:stima_grad}, $| \Gamma_{\psi_n}
	(\eta_n) | \leq C \abs{\psi_{n}}_{H^{1/2}} |\eta_n |_{H^{1/2}}$
	and the fact that $\{ u_n \}$ is a bounded sequence in $H^1$ , we
	get
	\begin{align*}
		\frac{1}{2} d \mathcal{J}_{W}(u_n)[ u_n] \leq & -
		|\eta_n|^2_{L^2} (1- Z \cfstr \gamma_T) \| w \|^2_{H^1} - (1 -
		\cfstr\gamma_{T}) \|u_n \|^2_{H^1} \\
		&\qquad + \frac{1- 2 |\eta_n|^2_{L^2}} {1- |\eta_n|^2_{L^2}}
		\Gamma_{\psi_n} (\eta_n) \\
		\leq & - |\eta_n |^2_{L^2}(1- Z \cfstr \gamma_T) \| w \|^2_{H^1} +
		\frac{ 1- 2|\eta_n |^2_{L^2}} {1- |\eta_n |^2_{L^2}} C
		\|u_n\|_{H^{1}} \\
		\leq & - \frac{1}{2} (1- Z \cfstr \gamma_T) \| w \|^2_{H^1} + o(1)
	\end{align*}
	a contradiction.
	 
	(ii) Since $ |\eta_n|^2_{L^2} \leq \frac{1}{2} - \kappa$ and
	$\{ u_n \}$ is a bounded sequence in $H^1$, by
	\eqref{eq:stima_grad} we may conclude that
	\begin{equation*}
		\Gamma_{\psi_n} (\eta_n) \geq - C \|d \mathcal{J}_{W}(u_n)\| =
		o(1)
	\end{equation*}
	for some constant $C>0$ independent on $n$.
     
	Now, from $ u_n \rightharpoonup u $ weakly in $H^1$, $\eta_n
	\rightharpoonup \eta = \fw^{-1} \left( \begin{smallmatrix} 0 \\
	u_{tr}\end{smallmatrix} \right)$ weakly in $H^{1/2}$ and $a_n \to
	a$ in $\C$, since $V v_{+ } \in H^{- 1/2}$ we have
	\begin{equation*}
		\int_{\R^3} V \RE(a_n v_+, (\eta_n - \eta))(y) \, dy \to 0
	\end{equation*}
	and in view of equation \eqref{eq:convergenceto0}
	\begin{align*}
		& \iint_{\R^3 \times \R^3} \frac{\rho_{\psi_n} (y) \RE(a_n
		v_+, (\eta_n - \eta))(z) }{|y-z|} \, dy \, dz\to 0 \\
		& \iint_{\R^3 \times \R^3} \frac{ J_{\psi_n} (y) \cdot \RE(a_n
		v_+, \dalfa( \eta_n - \eta))(z) }{|y-z|} \, dy \, dz \to 0
	\end{align*}
	then  we have
	\begin{align*}
		\frac{1}{2} d \mathcal{J}_{W} &(u_n) [u_n - u] \leq \\
		& \leq - | \eta_n -\eta |^2_{L^2} (1 - Z \cfstr \gamma_T) \| w \|^2_{H^1}
		- (1 - 2 \cfstr \gamma_{T}) \| u_n - u \|^2_{H^1} \\
		& \qquad - \frac{| \eta_n -\eta |^2_{L^2}} { 1 - |\eta_n
		|^2_{L^2}} \Gamma_{\psi_n}(\eta_n) + o(1) \\
		& \leq - | \eta_n -\eta |^2_{L^2} (1 - Z \cfstr \gamma_T) \| w
		\|^2_{H^1} - (1 - 2 \cfstr \gamma_{T}) \| u_n - u \|^2_{H^1} +
		o(1)
	\end{align*}
	Hence we may conclude that $u_n \to u $ strongly in $H^1$.
\end{proof}

We have the following strict concavity  result
        
\begin{prop}
	\label{prop:concavity}
	Let $u \in H^1$ be a critical point of $ \mathcal{J}_{W}$, namely
	$ d \mathcal{J}_{W}(u)[h] = 0$ for any $h \in H^1$, such that
	$|u_{tr}|^2_{L^2} < \frac{1}{2}$.
	 
	Then $u$ is a strict local maximum for $ \mathcal{J}_{W}$, namely
	\begin{equation*}
		d^2 \mathcal{J}_{W}(u)[h ; h] \leq - \delta \|h \|^2_{H^1}
		\qquad \forall \, h \in H^1
	\end{equation*}
	for some $\delta >0$.
\end{prop}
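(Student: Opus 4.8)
The plan is to start from the second-order data already recorded in \eqref{eq:Q1}--\eqref{eq:Q2}, namely $d^2\mathcal{J}_W(u)[h;h] = Q_1[h;h] + Q_2[h;h]$, and to use the criticality of $u$ to turn $Q_1$ into a manifestly nonpositive term which, moreover, cancels the only dangerous term of $Q_2$. Set $\phi = \left( \begin{smallmatrix} a(u)w \\ u\end{smallmatrix} \right)$, $\psi = \fw^{-1}\phi_{tr}$, $v_+ = \fw^{-1}\left( \begin{smallmatrix} w_{tr}\\ 0\end{smallmatrix} \right)$, and, for $h\in H^1$, $\xi = \fw^{-1}\left( \begin{smallmatrix} 0\\ h_{tr}\end{smallmatrix} \right)$ and $\tilde\xi = da(u)[h]\,v_+ + \xi = \fw^{-1}\left( \begin{smallmatrix} da(u)[h]\,w_{tr}\\ h_{tr}\end{smallmatrix} \right)$; recall $v_+ = \Lambda_+ v_+$ and $\xi = \Lambda_-\xi$, and that $\fw$ is $L^2$- and $H^{1/2}$-unitary. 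Since $u$ is a critical point of $\mathcal{J}_W$, the point $\phi$ is a constrained critical point of $\mathcal{I}$ on $\mathcal{X}_W$, so $d\mathcal{I}(\phi)[k] = 2\mult(\phi)\,\RE\langle\phi_{tr}| k_{tr}\rangle_{L^2}$ for every $k\in W\times X_-$; by the computation yielding \eqref{eq:moltiplicatore}, applied to the constant Palais--Smale sequence $\phi^n\equiv\phi$, one gets $\mult(\phi)=\mathcal{J}_W(u)+\tfrac{\cfstr}{2}\iint |y-z|^{-1}(\rho_\psi\rho_\psi - J_\psi\cdot J_\psi)\,dy\,dz>0$, the critical point being at positive level. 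Testing with $k=\left( \begin{smallmatrix} w\\ 0\end{smallmatrix} \right)$ and comparing with the explicit formula for $d\mathcal{I}(\phi)\bigl[\left( \begin{smallmatrix} w\\ 0\end{smallmatrix} \right)\bigr]$ and with \eqref{eq:Gamma} gives both
\begin{equation*}
	d\mathcal{I}(\phi)\bigl[\left( \begin{smallmatrix} w\\ 0\end{smallmatrix} \right)\bigr] = 2\mult(\phi)\,a(u) \qquad\text{and}\qquad \mult(\phi) = \norm{w}_{H^1}^2 + \frac{\Gamma_\psi(\psi)}{a(u)^2}.
\end{equation*}

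Feeding the first identity into \eqref{eq:Q1} and using $d^2a(u)[h;h] = -\bigl(\abs{h_{tr}}^2_{L^2} + da(u)[h]^2\bigr)/a(u)$ yields $Q_1[h;h] = -2\mult(\phi)\bigl(\abs{h_{tr}}^2_{L^2} + da(u)[h]^2\bigr)\le 0$. In $Q_2$ the leading contribution is $2\,da(u)[h]^2\norm{w}_{H^1}^2 - 2\norm{h}_{H^1}^2$, and by the second identity the possibly large term $2\,da(u)[h]^2\norm{w}_{H^1}^2$ is cancelled exactly by the $da(u)[h]^2$-part of $Q_1$, leaving
\begin{equation*}
	d^2\mathcal{J}_W(u)[h;h] = -2\norm{h}_{H^1}^2 - 2\norm{w}_{H^1}^2\,\abs{h_{tr}}^2_{L^2} - \frac{2\Gamma_\psi(\psi)}{a(u)^2}\bigl(\abs{h_{tr}}^2_{L^2} + da(u)[h]^2\bigr) + \mathcal{R}(h),
\end{equation*}
where $\mathcal{R}(h)$ is the sum of the three Coulomb-type terms in \eqref{eq:Q2} (the one with $V$, the one with $\rho_\psi,J_\psi$, and the self-energy term quadratic in $\RE(\psi,\tilde\xi)$ and $\RE(\psi,\dalfa\tilde\xi)$), all evaluated with $h=k=\left( \begin{smallmatrix} da(u)[h]\,w\\ h\end{smallmatrix} \right)$.

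It then remains to estimate $\mathcal{R}(h)$ and the $\Gamma_\psi(\psi)$-correction. The $V$-term $2\cfstr\int V\abs{\tilde\xi}^2$ is $\le 0$ and is dropped; in the remaining ones I would decompose $\tilde\xi = da(u)[h]\,v_+ + \xi$ into its $\Lambda_\pm$-parts, discard the nonnegative ``current'' part of the self-energy term (nonnegative by \eqref{eq:correntePositiva}), use $\abs{J_\psi}\le\rho_\psi$ together with the analogue for $\tilde\xi$ (Remark \ref{rem:pos_current_term}), and bound the charge parts by the Tix inequality \eqref{eq:key_estimate} and a Hardy--Littlewood--Sobolev estimate, exploiting $\abs{v_+}^2_{H^{1/2}} = \abs{w_{tr}}^2_{H^{1/2}} \le \norm{w}_{H^1}^2$ and $\abs{\xi}^2_{H^{1/2}} = \abs{h_{tr}}^2_{H^{1/2}} \le \norm{h}_{H^1}^2$ (Lemma \ref{lem:betterminimizer}), the bound $da(u)[h]^2 \le \tfrac{\abs{u_{tr}}^2_{L^2}}{a(u)^2}\abs{h_{tr}}^2_{L^2}$, and the a priori control of $\norm{u}_{H^1}$ and $\abs{\psi}_{H^{1/2}}$ from Lemma \ref{lem:PalaisSmale}. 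The key structural point is that $\tilde\xi$ meets $v_+$ only through the scalar $da(u)[h]=O(\abs{h_{tr}}_{L^2})$, so every constant which may be large in $\norm{w}_{H^1}$ ends up multiplying $\abs{h_{tr}}^2_{L^2}$, while each coefficient of $\norm{h}_{H^1}^2$ carries a factor $\cfstr$ (or $Z\cfstr\gamma_T$); one is thus led to $\mathcal{R}(h)\le C_1\cfstr\norm{h}_{H^1}^2 + C_2(W)\abs{h_{tr}}^2_{L^2}$, and likewise $-\tfrac{2\Gamma_\psi(\psi)}{a(u)^2}\bigl(\abs{h_{tr}}^2_{L^2}+da(u)[h]^2\bigr)\le C_3(W)\abs{h_{tr}}^2_{L^2}$ (the positive part of $-\Gamma_\psi(\psi)$ controlled, via Tix, by $Z\cfstr\gamma_T\abs{w_{tr}}^2_{H^{1/2}}$ up to lower order, and $\abs{u_{tr}}^2_{L^2}<\tfrac12$ keeping $a(u)^2>\tfrac12$). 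Collecting everything,
\begin{equation*}
	d^2\mathcal{J}_W(u)[h;h] \le -(2 - C_1\cfstr)\norm{h}_{H^1}^2 - \bigl(2\norm{w}_{H^1}^2 - C_2(W) - C_3(W)\bigr)\abs{h_{tr}}^2_{L^2},
\end{equation*}
and once the bracket multiplying $\abs{h_{tr}}^2_{L^2}$ is shown to be nonnegative the claim follows with $\delta = 2 - C_1\cfstr>0$.

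The hard part will be precisely this last accounting: one must separate cleanly, inside $\mathcal{R}(h)$ and inside the $\Gamma_\psi(\psi)$-correction, the contributions governed by $\abs{h_{tr}}^2_{L^2}$ — to be absorbed by $-2\norm{w}_{H^1}^2\abs{h_{tr}}^2_{L^2}$, which is exactly why the explicit formula $\mult(\phi)=\norm{w}_{H^1}^2+\Gamma_\psi(\psi)/a(u)^2$ and the positivity $\mult(\phi)>0$ were needed — from those governed by $\norm{h}_{H^1}^2$, which are small because $\cfstr$ is, and then check that no large multiple of $\norm{h}_{H^1}^2$ survives. This is the step where the smallness of $\cfstr$, the restriction $Z<124$ (so $Z\cfstr\gamma_T<1$), and the strict inequality $\abs{u_{tr}}^2_{L^2}<\tfrac12$ (equivalently the number $\kappa$ of Proposition \ref{prop:PalaisSmale2}, which makes $da(u)[h]^2\le\theta\abs{h_{tr}}^2_{L^2}$ with $\theta<1$) all enter.
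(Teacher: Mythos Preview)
Your approach diverges from the paper's and, as you yourself flag, the ``hard part'' is where it breaks. The paper does \emph{not} pass through the Lagrange multiplier $\mult(\phi)$ at all. Instead it adds to $Q_{1}$ and $Q_{2}$ carefully chosen multiples of the vanishing quantities $d\mathcal{J}_{W}(u)[u]$ and $d\mathcal{J}_{W}(u)[h]$: with $r[\xi]=\RE\langle\eta|\xi\rangle_{L^{2}}/(1-|\eta|^{2}_{L^{2}})$, $p=r^{2}$, $q=|\xi|^{2}_{L^{2}}/(1-|\eta|^{2}_{L^{2}})+p$, one writes
\[
Q_{1}=Q_{1}+q\,d\mathcal{J}_{W}(u)[u],\qquad
Q_{2}=Q_{2}+2r\,d\mathcal{J}_{W}(u)[h]+2p\,d\mathcal{J}_{W}(u)[u].
\]
After this substitution every term carrying a factor $\|w\|^{2}_{H^{1}}$ appears with a \emph{negative} coefficient (bounded by $-6p(1-Z\cfstr\gamma_{T}-\tfrac{2}{3}\cfstr\gamma_{K})$), and one reads off directly
\[
d^{2}\mathcal{J}_{W}(u)[h;h]\le -(1-8\cfstr\gamma_{T})\|h\|^{2}_{H^{1}},
\]
a $W$-independent bound. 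No comparison ``$C_{2}(W)+C_{3}(W)\le 2\|w\|^{2}_{H^{1}}$'' is ever needed.

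Your route, by contrast, produces exactly such a comparison, and it does not close in the stated range $Z<124$. The positive part of $-\Gamma_{\psi}(\psi)/a(u)^{2}$ alone is, via Tix, of order $Z\cfstr\gamma_{T}\|w\|^{2}_{H^{1}}/a(u)^{2}$; multiplied by $(|h_{tr}|^{2}_{L^{2}}+da(u)[h]^{2})\le 2|h_{tr}|^{2}_{L^{2}}/a(u)^{2}$ this already gives a contribution to $C_{3}(W)$ of size roughly $4Z\cfstr\gamma_{T}\|w\|^{2}_{H^{1}}/a(u)^{4}$, which for $Z\cfstr\gamma_{T}$ close to $1$ and $a(u)^{2}$ close to $\tfrac{1}{2}$ can be many times $2\|w\|^{2}_{H^{1}}$. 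Add $C_{2}(W)\ge 8\cfstr\gamma_{K}\|w\|^{2}_{H^{1}}$ from $\mathcal{R}$ and the bracket in front of $|h_{tr}|^{2}_{L^{2}}$ is negative. The structural observation that ``every large-in-$\|w\|$ constant multiplies $|h_{tr}|^{2}_{L^{2}}$'' is correct, but absorbing those constants into $-2\|w\|^{2}_{H^{1}}|h_{tr}|^{2}_{L^{2}}$ is not possible without using the criticality in the directions $u$ and $h$ themselves, which is precisely the paper's trick. A secondary issue: your appeal to \eqref{eq:moltiplicatore} for $\mult(\phi)>0$ presupposes $\mathcal{J}_{W}(u)>0$, which is not among the hypotheses; the paper's argument uses only $d\mathcal{J}_{W}(u)[u]=0$ (yielding $\Gamma_{\psi}(\eta)\ge 0$ via \eqref{eq:stima_grad}) and never needs the sign of $\mult(\phi)$.
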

        
\begin{proof}
	Let $\phi = \left( \begin{smallmatrix} a(u) w\\ u
	\end{smallmatrix} \right) $ and $\psi = \fw^{-1} \phi_{tr} = a(u)
	v_{+} + \eta$ where $v_{+} = \fw^{-1} \left( \begin{smallmatrix}
	w_{tr} \\ 0\end{smallmatrix} \right)$ and $\eta = \fw^{-1} \left(
	\begin{smallmatrix} 0 \\ u_{tr}\end{smallmatrix} \right)$.  From
	the assumptions follows that $|\eta |^2_{L^2} < \frac{1}{2}$.
       
	Now, let $ d^2 \mathcal{J}_{W}(u)[h ; h] = Q_1 [h ; h] + Q_2 [h ;
	h] $ (see \eqref{eq:Q1}-\eqref{eq:Q2}).  We set $\xi= \fw^{-1}
	\left( \begin{smallmatrix} 0 \\ h_{tr} \end{smallmatrix} \right)$
	and
	\begin{align*}
		&r[\xi] = \frac{\RE \langle \eta, \xi \rangle_{L^2}} { 1 -
		|\eta |^2_{L^2}},\\
		&p[\xi; \xi] = (r[\xi] )^2 \geq 0, \\
		&q[\xi; \xi] = \frac{ |\xi |^2_{L^2}} {1 - | \eta |^2_{L^2}} +
		p[\xi; \xi] \geq 2 p[\xi; \xi].
	\end{align*}

	We have $ da(u) [\xi] = - a(u) r[\xi]$ and
	\begin{equation*}
		(da(u) [\xi] )^2 = a(u)^2 p[\xi; \xi] \geq 0 ; \qquad \quad d^2
		a(u) [\xi; \xi] = - a(u) q[\xi; \xi] \leq 0.
	\end{equation*}
	Since $ d \mathcal{J}_{W}(u)[ u] = 0$, in view of
	\eqref{eq:stima_grad} we have $\Gamma_{\psi}(\eta) \geq 0$.

	Let us compute $Q_1 [h ; h]$ adding a zero term for convenience,
	we get
	\begin{align*}
		Q_1 [h ; h] &= Q_1 [h ; h] + q[\xi; \xi] d \mathcal{J}_{W}(u)[
		u] \\
		&= - 2 q[\xi; \xi] \Bigl( \|w\|^2_{H^1} + \cfstr \int_{\R^3} V
		\rho_{v_+} \, dy \\
		&\qquad + \cfstr\iint_{\R^3 \times \R^3} \frac{\rho_{\psi} (y)
		\rho_{v_+}(z) - J_{\psi} (y) \cdot J_{v_+}(z) }{|y-z|} \, dy
		dz \Bigr)\\
		&\qquad - 2 q[\xi; \xi] \Bigl( \|u \|^2_{H^1} - \cfstr
		\int_{\R^3} V \rho_{\eta} \, dy \\
		&\qquad - \cfstr \iint_{\R^3 \times \R^3} \frac{\rho_{\psi}
		(y) \rho_{\eta} (z) - J_{\psi} (y) \cdot J_{\eta}(z) }{|y-z|}
		\, dy dz \Bigr) \\
		&\qquad - 2 q[\xi; \xi] \frac{ |\eta|^2_{L^2}} {1-
		|\eta|^2_{L^2}} \Gamma_{\psi}(\eta) \\
		&\leq - 2 q[\xi; \xi] (1 - Z \cfstr \gamma_T) \|w\|^2_{H^1} - 2
		q[\xi; \xi] (1 - 2 \cfstr \gamma_{T}) \|u \|^2_{H^1}
	\end{align*}  
	Now let estimate $Q_2 [h ; h] $, setting $\chi = \fw^{-1} \left(
	\begin{smallmatrix} da(u) [\xi] w_{tr} \\ h_{tr} 
	\end{smallmatrix}
	\right) = da(u) [\xi] v_{+} + \xi$.  We first note that by
	H\"older inequality implies
	\begin{align*}
		\iint_{\R^3 \times \R^3 } \frac{\RE (\psi, \chi )(y) \RE(\psi,
		\chi )(z) } {|y-z | } \, dy \, dz \leq \iint_{\R^3 \times \R^3
		} \frac{\rho_{\psi}(y) \rho_{\chi} (z) } {|y-z | } \, dy \, dz
	\end{align*}
	and by Remark \ref{rem:pos_current_term} follows
	\begin{equation*}
		\iint_{\R^3 \times \R^3 } \frac{\RE (\psi, \dalfa \chi)(y)
		\cdot \RE (\psi, \dalfa \chi )(z) } {|y-z | } \, dy \, dz \geq
		0
	\end{equation*}
	hence we have
	\begin{align*}
		Q_2 [h ; h] &\leq 2 p[\xi; \xi] \| a(u) w \|^2_{H^1} - 2 \|h
		\|^2_{H^1} + 2 \cfstr \int_{\R^3} V \rho_{\chi} (y) \, dy \\
		&\qquad + 2 \cfstr \iint_{\R^3 \times \R^3 } \frac{
		\rho_{\psi} (y) \rho_{\chi} (z) - J_{\psi}(y) \cdot J_{\chi}
		(z)} {|y-z | } \, dy \, dz\\
		&\qquad + 4 \cfstr \iint_{\R^3 \times \R^3 } \frac{
		\rho_{\psi} (y) \rho_{\chi} (z) } {|y-z | } \, dy \, dz \\
		& \leq 2 a(u)^2 p[\xi; \xi] \Bigl(\| w \|^2_{H^1} + \cfstr
		\int_{\R^3} V \rho_{v_{+}} \\
		&\qquad + \cfstr \iint_{\R^3 \times \R^3} \frac{\rho_{\psi}
		(y) \rho_{v_+}(z) - J_{\psi} (y) \cdot J_{v_+} (z) }{|y-z|}
		\Bigr)\\
		&\qquad - 2 \|h \|^2_{H^1} + 2 \cfstr \int_{\R^3} V \rho_{\xi}
		+ 2 \cfstr \iint_{\R^3 \times \R^3} \frac{\rho_{\psi} (y)
		\rho_{\xi}(z) - J_{\psi} (y) \cdot J_{\xi}(z) }{|y-z|} \\
		&\qquad - 4 r[\xi] \Gamma_{\psi}(\xi) + 4 \cfstr \iint_{\R^3
		\times \R^3 } \frac{ \rho_{\psi} (y) \rho_{\chi} (z) } {|y-z |
		} \, dy \, dz
	\end{align*}  
	Again it is convenient to add the following zero terms,
	\begin{align*}	      
		0 &= 2 r[\xi] d \mathcal{J}(u)[ h] + 2 p[\xi; \xi] d
		\mathcal{J} (u) [u ] \\
		&= - 4 p[\xi; \xi] \left( \| w \|^2_{H^1} + \cfstr \int_{\R^3}
		V \rho_{v_+} + \cfstr\iint_{\R^3 \times \R^3}
		\frac{\rho_{\psi} (y) \rho_{v_+} (z) - J_{\psi} (y) \cdot
		J_{v_+}(z) }{|y-z|} \right)\\
		& - 4 p[\xi; \xi] \left( \|u \|^2_{H^1} - \cfstr \int_{\R^3} V
		\rho_{\eta} \, dy - \cfstr \iint_{\R^3 \times \R^3}
		\frac{\rho_{\psi} (y) \rho_{\eta}(z) - J_{\psi} (y) \cdot
		J_{\eta}(z) }{|y-z|} \right) \\
		& - 4 r[\xi] \RE \langle u | h \rangle_{H^1} + 4 r[\xi] \cfstr
		\int_{\R^3} V \RE (\eta, \xi) \, dy \\
		& + 4 r[\xi] \cfstr \iint_{\R^3 \times \R^3} \frac{\rho_{\psi} (y)
		\RE(\eta, \xi) (z) - J_{\psi} (y) \cdot \RE (\eta, \dalfa
		\xi)(z) }{|y-z|} \, dy dz \\
		& + 4 r[\xi] \Gamma_{\psi}(\xi) - 4 p[\xi; \xi] \frac{
		|\eta|^2_{L^2}} {1- |\eta|^2_{L^2}} \Gamma_{\psi}(\eta).
	\end{align*}
	In view of \eqref{eq:stima_grad} $\Gamma_{\psi}(\eta) \geq 0 $ and
	by Lemma \ref{lem:key_estimate} we have
	\begin{align*}
		Q_2 [h ; h] &= Q_2 [h ; h] +2 r[\xi] d \mathcal{J}_{W}(u)[ h]
		+ 2 p[\xi; \xi] d \mathcal{J}_{W} (u) [u ] \\
		&\leq - 2 p[\xi; \xi] \left( \| w \|^2_{H^1} + \cfstr
		\int_{\R^3} V \rho_{v_+} + \cfstr \iint_{\R^3 \times \R^3}
		\frac{\rho_{\psi} (y) \rho_{v_+} (z) - J_{\psi} (y) \cdot
		J_{v_+}(z) }{|y-z|} \right)\\
		&\quad - 2 \|h \|^2_{H^1} + 2 \cfstr \int_{\R^3} V \rho_{\xi}
		\, dy + 2 \cfstr \iint_{\R^3 \times \R^3} \frac{\rho_{\psi}
		(y) \rho_{\xi}(z) - J_{\psi} (y) \cdot J_{\xi}(z) }{|y-z|} \,
		dy dz \\
		&\quad - 4 p[\xi; \xi] \left( \|u \|^2_{H^1} - \cfstr
		\int_{\R^3} V \rho_{\eta} \, dy - \cfstr \iint_{\R^3 \times
		\R^3} \frac{\rho_{\psi} (y) \rho_{\eta}(z) - J_{\psi} (y)
		\cdot J_{\eta} (z) }{|y-z|} \, dy dz \right) \\
		&\quad + 4 r[\xi] ( - \RE \langle u |h \rangle_{H^1} +
		\cfstr \int_{\R^3} V \RE (\eta, \xi) )\\
		&\quad + 4 r[\xi] \cfstr \iint_{\R^3 \times \R^3}
		\frac{\rho_{\psi} (y) \RE( \eta, \xi) (z) - J_{\psi} (y) \cdot
		\RE( \eta, \dalfa \xi)(z) }{|y-z|} \\
		&\quad + 4 \cfstr \iint_{\R^3 \times \R^3 } \frac{ \rho_{\psi}
		(y) \rho_{\chi} (z) } {|y-z | } \, dy \, dz \\
		&\leq- 2 p[\xi; \xi] (1 - Z \cfstr \gamma_T) \| w \|^2_{H^1} - (1 - 2
		\cfstr \gamma_{T}) \|h \|^2_{H^1} - (1- 2 \cfstr \gamma_{T})
		\|h + 2 r[\xi] u \|^2_{H^1} \\
		&\qquad + 4 \cfstr \gamma_{K} | da(u) [\xi] v_{+} + \xi
		|^2_{H^{1/2}}
	\end{align*}
	Therefore, since $q[\xi; \xi] \geq 2p[\xi; \xi]$, we have
	\begin{align*}
		Q_1 [h ; h] + Q_2 [h ; h] &\leq - 6 (1- Z \cfstr \gamma_T) p[\xi;
		\xi] \|w\|^2_{H^1} \\
		&\qquad - (1 - 2 \cfstr \gamma_{T}) \|h \|^2_{H^{1}} + 4
		\cfstr \gamma_{K} ( p[\xi; \xi] \|a(u) w\|^2_{H^1} + \| h
		\|^2_{H^1} ) \\
		& \leq - 6 (1- Z \cfstr \gamma_T - \frac{2}{3} \cfstr \gamma_{K} )
		p[\xi; \xi] \| w\|^2_{H^1} - (1 - 2 \cfstr \gamma_{T} - 4
		\cfstr \gamma_{K}) \|h \|^2_{H^1} \\
		& \leq - (1 - 8 \cfstr \gamma_{T} ) \|h \|^2_{H^1}
	\end{align*}
	where we have used that $\gamma_{K} < \frac{3}{2} \gamma_{T}$,
	$Z \cfstr \gamma_T + \cfstr \gamma_{T} = \cfstr (Z + 1) \gamma_{T} \leq
	1$ (since $Z \leq 123$).
\end{proof} 
In view of the above results we may conclude
\begin{prop}
	\label{prop:sup_achieved}
	For any $w \in H^{1}(\R^4_{+}, \mathbb{C}^2)$ with $|w_{tr}|_{L^2}
	= 1$ there exists unique $ \phi_2 = \phi_2(w) $, a strict global
	maximum of $\mathcal{J}_{W}$, namely 
	\begin{equation*}
		\mathcal{J}_{W}(\phi_{2}) = \sup_{u \in B_{1}}
		\mathcal{J}_{W}(u) = \sup_{ \phi \in \mathcal{X}_{W} }
		\mathcal{I}(\phi) = \lambda_{W}.
	\end{equation*}
	Moreover 
	\begin{itemize}
		\item $d\mathcal{J}_{W}( \phi_2(w)) = 0$;
		
		\item there exists $\delta >0$ such that
		\begin{equation*}
			\mathcal{J}_{W}( \phi_2(w))[h ; h] \leq - \delta \|h\|^2
			\qquad \forall h \in H^1
		\end{equation*}
		
		\item the map $w \to \phi_2(w)$, is smooth and
		\begin{equation*}
			d\phi_2 (w)[dP(w)[ \cdot ]] = - (d_{u} F(w,
			\phi_2(w)))^{-1} [d_{w} F(w, \phi_2(w))[\cdot ]] .
		\end{equation*}
		where $P(w) = \frac{w}{|w_{tr}|_{L^2}}$ and 
		\begin{equation*}
			F(w, u)[h] = d \mathcal{I}\left(\left( \begin{smallmatrix}
			a(u) P( w )\\ u \end{smallmatrix} \right) \right) \left[
			\left(
			\begin{smallmatrix} 
				da(u)[h ] P(w) \\ h
			\end{smallmatrix} 
			\right) \right] \qquad \forall h \in H^1.
		\end{equation*}
	\end{itemize}
\end{prop}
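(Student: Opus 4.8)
\emph{Plan.} I would prove the four assertions in turn, each reduced to the material already at hand.

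\emph{Step 1: existence of the maximizer.} Take a maximizing sequence $\{u_n\}\subset B_1$, $\mathcal J_W(u_n)\to\lambda_W$. The bound $\mathcal J_W(u)\le-(1-\cfstr\gamma_T)\|u\|^2_{H^1}+C_W$ obtained inside the proof of Lemma~\ref{lem:apriori_estimates} shows $\{u_n\}$ is bounded in $H^1$, and since $B_1$ is open, Ekeland's variational principle lets us assume in addition $\|d\mathcal J_W(u_n)\|\to0$. Thus $\{u_n\}$ is a Palais--Smale sequence for $\mathcal J_W$ at the positive level $\lambda_W$ (positivity from Lemma~\ref{lem:apriori_estimates}), so Proposition~\ref{prop:PalaisSmale2} applies: $|\eta_n|^2_{L^2}\le\frac12-\kappa$ and, along a subsequence, $u_n\to\phi_2$ strongly in $H^1$. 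By continuity of $\mathcal J_W$ and $d\mathcal J_W$ we get $\phi_2\in B_1$, $|(\phi_2)_{tr}|^2_{L^2}\le\frac12-\kappa$, $d\mathcal J_W(\phi_2)=0$ and $\mathcal J_W(\phi_2)=\lambda_W$; recalling $\mathcal J_W(u)=\mathcal I(\phi)$ with $\phi=\left(\begin{smallmatrix}a(u)w\\ u\end{smallmatrix}\right)$ and $\lambda_W=\sup_{\phi\in\mathcal X_W}\mathcal I(\phi)$ (Lemma~\ref{lem:apriori_estimates}), this is the chain of equalities in the statement.

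\emph{Step 2: strict maximum.} Since $d\mathcal J_W(\phi_2)=0$ and $|(\phi_2)_{tr}|^2_{L^2}<\frac12$, Proposition~\ref{prop:concavity} applies verbatim and gives $d^2\mathcal J_W(\phi_2)[h;h]\le-\delta\|h\|^2_{H^1}$ for all $h$, with $\delta=1-8\cfstr\gamma_T>0$. The same conclusion holds at \emph{any} critical point $u$ of $\mathcal J_W$ lying in $\Sigma:=\{\,u:|u_{tr}|^2_{L^2}<\tfrac12\,\}$, so such critical points are nondegenerate, hence isolated in $H^1$; moreover, by Proposition~\ref{prop:PalaisSmale2} applied to the constant sequence, every critical point of $\mathcal J_W$ at a \emph{positive} level automatically lies in $\Sigma$.

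\emph{Step 3: uniqueness.} Suppose $u_0,u_1$ both realize $\lambda_W$. By Step~1 each is a critical point in $\Sigma$, and since $u\mapsto|u_{tr}|_{L^2}$ is convex the segment $u_t=(1-t)u_0+tu_1$ stays in $\Sigma$. Put $g(t)=\mathcal J_W(u_t)$: then $g(0)=g(1)=\lambda_W=\max g$, $g'(0)=g'(1)=0$, and $g''(0),g''(1)\le-\delta\|u_1-u_0\|^2_{H^1}$ by Step~2. If $u_0\ne u_1$, then $g$ is not constant (critical points are isolated), so $\lambda_W-g$ attains a positive interior maximum at some $t_0\in(0,1)$, where $d\mathcal J_W(u_{t_0})[u_1-u_0]=0$ and $d^2\mathcal J_W(u_{t_0})[u_1-u_0;u_1-u_0]\ge0$. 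To reach a contradiction I would upgrade Proposition~\ref{prop:concavity} to $d^2\mathcal J_W(u)[h;h]\le-\delta'\|h\|^2_{H^1}$ for \emph{all} $u\in\Sigma$ and all $h$, by rerunning the computation of that proof while keeping the now non‑vanishing terms $q[\xi;\xi]\,d\mathcal J_W(u)[u]$ and $r[\xi]\,d\mathcal J_W(u)[h]$ as errors controlled through \eqref{eq:stima_grad} and the Kato/Tix inequalities. Alternatively, a Pucci--Serrin three‑critical‑points argument applies: two distinct strict local maxima of the $C^1$ functional $\mathcal J_W$, which is bounded above and satisfies (PS) at positive levels, would force a further critical point $v$ that is not a local maximum; being at a positive level it lies in $\Sigma$ (Step~2), hence is a strict local maximum by Proposition~\ref{prop:concavity}, a contradiction. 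Either way $u_0=u_1$, so $\phi_2=\phi_2(w)$ is unique and $\mathcal J_W(\phi_2(w))=\sup_{u\in B_1}\mathcal J_W(u)=\sup_{\phi\in\mathcal X_W}\mathcal I(\phi)=\lambda_W$.

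\emph{Step 4: smooth dependence, and the main obstacle.} Fix $w$ with $|w_{tr}|_{L^2}=1$. The map $(w,u)\mapsto F(w,u)$ is $C^\infty$ near $(w,\phi_2(w))$ (it is assembled from the smooth $\mathcal I$, the smooth $a$, and $P$), and $F(w,\phi_2(w))=0$ is precisely the Euler--Lagrange equation for $\mathcal J_W$ at its maximizer. Under the identification of $(H^1)^*$ with $H^1$ via $\langle\cdot,\cdot\rangle_{H^1}$, the partial derivative $d_uF(w,\phi_2(w))$ is the bounded symmetric bilinear form $d^2\mathcal J_W(\phi_2(w))$, which by Proposition~\ref{prop:concavity} satisfies $d^2\mathcal J_W(\phi_2(w))[h;h]\le-\delta\|h\|^2_{H^1}$; by Lax--Milgram it is a topological isomorphism $H^1\to(H^1)^*$. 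The implicit function theorem then produces a $C^\infty$ map $w\mapsto\widetilde\phi_2(w)$ near $w$ with $F(w,\widetilde\phi_2(w))=0$ and $d\widetilde\phi_2(w)[dP(w)[\,\cdot\,]]=-(d_uF(w,\widetilde\phi_2(w)))^{-1}[d_wF(w,\widetilde\phi_2(w))[\,\cdot\,]]$; by the uniqueness of Step~3, $\widetilde\phi_2(w)=\phi_2(w)$, which yields the asserted smoothness and the displayed formula. I expect Step~3 to be the genuinely non‑routine point: Proposition~\ref{prop:concavity} supplies strict concavity only \emph{at} critical points, whereas the segment argument needs it throughout $\Sigma$ (and the variational alternative needs the mountain‑pass critical point to sit at a positive level so that Proposition~\ref{prop:PalaisSmale2} confines it to $\Sigma$); tracking the sign of the extra terms that vanished in the proof of Proposition~\ref{prop:concavity} — using \eqref{eq:stima_grad}, \eqref{eq:stima_gradPalla} and the Kato/Tix bounds — is where the work lies. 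Everything else is a direct application of Lemmas~\ref{lem:betterminimizer} and \ref{lem:apriori_estimates}, Propositions~\ref{prop:PalaisSmale2} and \ref{prop:concavity}, Ekeland's principle and the implicit function theorem.
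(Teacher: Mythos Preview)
Your proposal is correct and follows the paper's proof closely: existence via Ekeland plus Proposition~\ref{prop:PalaisSmale2}, strict concavity at the maximizer via Proposition~\ref{prop:concavity}, and smoothness via the implicit function theorem with Lax--Milgram are exactly what the paper does.

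For uniqueness, the paper takes precisely your \emph{second} alternative, the mountain-pass/three-critical-points route, so you need not pursue the segment argument or try to globalize Proposition~\ref{prop:concavity}. One refinement: the paper confines the mountain-pass critical point to $\Sigma=\{|u_{tr}|^2_{L^2}<\tfrac12\}$ not by arguing that its level is positive (which would be needed to invoke Proposition~\ref{prop:PalaisSmale2} as you suggest), but by observing from \eqref{eq:stima_gradPalla} that $d\mathcal J_W(u)[u]<0$ on $\partial B_{1/2}$, so $B_{1/2}$ is positively invariant for the (ascending) gradient flow and the min-max can be run entirely inside $B_{1/2}$. This sidesteps the question of whether the saddle level is positive and delivers a critical point in $\Sigma$ directly, where Proposition~\ref{prop:concavity} yields the contradiction.
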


\begin{proof}
	It is clear that the equality $\sup_{u \in B_{1}}
	\mathcal{J}_{W}(u) = \sup_{ \phi \in \mathcal{X}_{W}}
	\mathcal{I}(\phi)$ holds.  The existence of a maximizer for
	$\mathcal{J}_{W}$ then follows from lemma
	\ref{lem:apriori_estimates}, which shows that the supremum is
	strictly positive, Ekeland's variational principle, which implies
	that we can find a maximizing sequence $u_{n}$ which is also a
	Palais-Smale sequence, and Proposition \ref{prop:PalaisSmale2},
	which shows that $u_{n} \to u$ with $\abs{u_{\text{tr}}}_{L^{2}} <
	\frac{1}{2}$.
	
	Suppose that we have another maximizer $\tilde{u}$.  By
	Proposition \ref{prop:PalaisSmale2} we deduce that
	$\abs{\tilde{u}_{\text{tr}}}_{L^{2}} < \frac{1}{2}$.
	
	To reach a contradiction, we consider the set
	\begin{equation*}
		\mathcal{G} = \settc{g \colon [-1,1] \to
		H^{1}(\mathbb{R}^{4}_{+}, \mathbb{C}^{2})}{g(-1) = u, \ g(1) =
		\tilde{u}, \ \abs{(g(t))_{\text{tr}}}_{L^{2}} \leq
		\frac{1}{2}}
	\end{equation*}
	and the min-max level
	\begin{equation*}
		c = \sup_{g \in \mathcal{G}} \min_{t \in [-1,1]}
		\mathcal{J}_{W}(g(t))
	\end{equation*}
	The functional $\mathcal{J}_{W}$ satisfies the Palais-Smale
	condition, see proposition \ref{prop:PalaisSmale2}, and the set
	$B_{1/2}{} = \settc{u \in H^{1}(\R^4_{+},
	\mathbb{C}^2)}{\abs{u_{\text{tr}}}_{L^{2}} < 1/2}$ is invariant
	for the gradient flow generated by $d\mathcal{J}_{W}$ since by
	\eqref{eq:stima_gradPalla} $d\mathcal{J}_{W}(u)[u] < 0$ on
	$\partial B_{1/2}$.  Then we can deduce that $c$ is a Mountain
	pass critical level, and that there is a Mountain pass critical
	point $\tilde{v}$ in $B_{1/2}$, i.e.~such that
	$\abs{\tilde{v}_{\text{tr}}}_{L^{2}} < \frac{1}{2}$, a
	contradiction with Proposition \ref{prop:concavity}, since a
	Mountain pass critical point cannot be a strict local maximum.

	Finally to prove that the map $w \to \phi_2(w)$ is smooth we use
	the implicit function theorem.  Indeed let consider any open
	subset $U \subset H^1 \setminus \{w_{tr} = 0 \}$ and the smooth
	map $F : U \times H^1 \to H^{-1}$ defined by
	\begin{equation*}
		F(w, u)[h] = d \mathcal{I}\left(\left( \begin{smallmatrix}
		a(u) P( w )\\ u \end{smallmatrix} \right) \right) \left[
		\left( \begin{smallmatrix} da(u)[h] P(w) \\ h
		\end{smallmatrix} \right) \right] \qquad \forall h \in H^1.
	\end{equation*}

	Now fix $w_0 \in U$ with $|w_0|_{L^2} = 1$ and let $u_0 =
	\phi_2(w_0)$ and $W_0 = \spann \{ w_0 \}$, we have
	\begin{equation*}
		F(w_0, u_0) = d\mathcal{J}_{W_0}(u_0) = 0
	\end{equation*}
	and the operator $d_u F(w_0, u_0) : H^1 \to H^{-1}$ given by
	\begin{align*}
		(d_u F (w_0, u_0)[h]) [k] = & d^2 \mathcal{I} \left(\left(
		\begin{smallmatrix} a (u_0) w_0\\ u_0\end{smallmatrix} \right)
		\right) \left[ \left( \begin{smallmatrix} da (u_0) [h] w_0\\ h
		\end{smallmatrix} \right) ; \left( \begin{smallmatrix} da (
		u_0) [k] w_0\\ k \end{smallmatrix} \right) \right] \\
		&+ d \mathcal{I} \left( \left( \begin{smallmatrix} a ( u_0)
		w_0 \\ u_0 \end{smallmatrix} \right) \right) \left[
		\left(\begin{smallmatrix} d^2 a ( u_0) [h; k] w_0 \\
		0\end{smallmatrix} \right)\right] \qquad \forall h, k \in H^1
	\end{align*}
	is invertible.  Indeed, we simply apply the Riesz theorem on
	Hilbert spaces (or equivalently Lax-Milgram theorem) to the
	symmetric, bilinear and bi-continuous, quadratic form $Q : H^1
	\times H^1 \to \R$ given by
	\begin{equation*}
		Q[h;k] = - (d_u F (w_0, u_0)[h]) [k].
	\end{equation*}
	In view of Proposition \ref {prop:concavity}
	\begin{equation*}
		Q[h;h] = - d^2 \mathcal{J}_{W_0}( u_0)[h; h] \geq \delta
		\|h\|^2 \qquad \forall h \in H^1
	\end{equation*}
	for some $\delta >0$, namely $Q$ is definite positive (coercive)
	and the theorem apply, hence for any $f \in H^{-1}$ there exists
	unique $h \in H^1$ such that $Q[h;k] = f[k]$ for any $k \in H^1$,
	namely $d_u F(w_0, u_0)[h] = - f$.
 
	Therefore we can apply the implicit function theorem to conclude
	that there exists a neighborhood $U_0 \subset X_{+} \setminus
	\{w_{tr} = 0 \}$ of $w_0$ and a smooth map $u : U_0 \to H^1$ such
	that $F(w, u(w)) = 0 $ for all $w \in U_0$.
	
	Since we already know that for any $w \in X_{+} \setminus \{
	w_{tr} = 0 \} $ there exist $ \phi_2(P(w))$, the unique strict
	global maximum of $\mathcal{J}_{W}$, such that $F(w, \phi_2(P(w)))
	= 0$, we may conclude that $u(w) \equiv \phi_2(P(w))$ for any $w
	\in U_0$.

	Moreover, we have that for $w \in U_0$, $du(w): H^1 \to H^1$ is
	given by
	\begin{equation*}
		du(w)[h] = - (d_u F(w, u(w)))^{-1} [d_{w} F(w, u(w))[h]]
		\qquad \forall h \in H^1.
	\end{equation*}
\end{proof}    
    
\begin{cor}
	\label{cor:sup_achieved_I}
	For any $w \in H^{1}(\R^4_{+}, \mathbb{C}^2)$ with $|w_{tr}|_{L^2}
	= 1$, let $\bar{\phi}(w) = \left( \begin{smallmatrix} a(\phi_2(w)
	) w \\ \phi_2(w) \end{smallmatrix} \right)$.
	
	Then $\bar{\phi}(w) \in \mathcal{X}_{W} $ is the unique (up to
	phase) maximizer of $\mathcal{I}$ in $\mathcal{X}_{W} $, namely
	\begin{equation}
		\label{eq:p11}
		\mathcal{I}(\bar{\phi}(w)) = \sup_{ \phi \in \mathcal{X}_{W} }
		\mathcal{I}(\phi) = \lambda_{_{W}} > 0 .
	\end{equation}
	and 
	\begin{equation*}
		d \mathcal{I}(\bar{\phi}(w)) [h] = 2 \mult(\bar{\phi}(w)) \RE
		\langle(\bar{\phi}(w))_{tr} \mid h_{tr} \rangle_{L^2} \qquad
		\forall h \in W \oplus X_{-}
	\end{equation*}
	where 
	\begin{equation*}
		\mult(\bar{\phi}(w)) = \lambda_{W} + \frac{\cfstr }{2}
		\iint_{\R^3 \times \R^3}
		\frac{\rho_{\psi_w}(y)\rho_{\psi_w}(z) - J_{\psi_w}(y) \cdot
		J_{\psi_w}(z)}{|y-z|} \, dy dz .
	\end{equation*}
	and $\psi_{w} = \fw^{-1} (\bar{\phi}(w))_{\text{tr}}$.  Moreover,
	the following estimates holds
	\begin{itemize}
 
		\item[(i)] $ | \phi_{1}(w)_{\text{tr}} |^2_{L^2} >
		\frac{1}{2}$;
   
		\item [(ii)] $ \|\bar{\phi}(w) \|^2_{H^1} \leq \frac{1 +
		\cfstr\gamma_{T}Z}{(1 - \cfstr\gamma_{T})(1 -
		\cfstr\gamma_{T}Z)} \lambda_{W}$.
 
	\end{itemize}
	
\end{cor}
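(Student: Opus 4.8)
\emph{Strategy.} The Corollary is essentially Proposition~\ref{prop:sup_achieved} transported from the reduced functional $\mathcal{J}_{W}$ back to $\mathcal{I}$ on $\mathcal{X}_{W}$, together with two bookkeeping estimates. Since $\mathcal{J}_{W}(u)=\mathcal{I}\bigl(\bigl(\begin{smallmatrix}a(u)w\\ u\end{smallmatrix}\bigr)\bigr)$ by definition, Proposition~\ref{prop:sup_achieved} gives directly that $\bar{\phi}(w)\in\mathcal{X}_{W}$ (because $a(\phi_{2}(w))w\in W$ and $|(\bar{\phi}(w))_{tr}|^{2}_{L^{2}}=|a(\phi_{2}(w))|^{2}+|(\phi_{2}(w))_{tr}|^{2}_{L^{2}}=1$) and $\mathcal{I}(\bar{\phi}(w))=\mathcal{J}_{W}(\phi_{2}(w))=\sup_{u\in B_{1}}\mathcal{J}_{W}=\sup_{\phi\in\mathcal{X}_{W}}\mathcal{I}=\lambda_{W}>0$. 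For uniqueness up to phase I would take any maximizer $\tilde{\phi}=\bigl(\begin{smallmatrix}\tilde{a}w\\ \tilde{\phi}_{2}\end{smallmatrix}\bigr)\in\mathcal{X}_{W}$; being a critical point of $\mathcal{I}$ on $\mathcal{X}_{W}$ it is a (constant) Palais--Smale sequence at the positive level $\lambda_{W}$, so Lemma~\ref{lem:PalaisSmale} gives $|\tilde{a}|^{2}=|(\tilde{\phi}_{1})_{tr}|^{2}_{L^{2}}>\tfrac12$, in particular $\tilde{a}\neq0$. Writing $\tilde{a}=|\tilde{a}|e^{i\theta}$, the element $e^{-i\theta}\tilde{\phi}=\bigl(\begin{smallmatrix}|\tilde{a}|w\\ e^{-i\theta}\tilde{\phi}_{2}\end{smallmatrix}\bigr)$ is again a maximizer (global phase invariance of $\mathcal{I}$ and of $\mathcal{X}_{W}$), now of the admissible form $\bigl(\begin{smallmatrix}a(u)w\\ u\end{smallmatrix}\bigr)$ with $u=e^{-i\theta}\tilde{\phi}_{2}$; hence $\mathcal{J}_{W}(u)=\lambda_{W}$, and the uniqueness of the maximizer of $\mathcal{J}_{W}$ in Proposition~\ref{prop:sup_achieved} forces $u=\phi_{2}(w)$, that is $\tilde{\phi}=e^{i\theta}\bar{\phi}(w)$.

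\emph{Euler--Lagrange identity and the multiplier.} This is the Lagrange multiplier rule: $\bar{\phi}(w)$ maximizes the smooth functional $\mathcal{I}$ on the codimension-one manifold $\mathcal{X}_{W}=\{\mathcal{G}=1\}\subset W\oplus X_{-}$, and $d\mathcal{G}(\bar{\phi}(w))[h]=2\RE\langle(\bar{\phi}(w))_{tr}\mid h_{tr}\rangle_{L^{2}}$ does not vanish identically on $W\oplus X_{-}$ (at $h=\bar{\phi}(w)$ it equals $2$), so there is $\mult(\bar{\phi}(w))\in\R$ with $d\mathcal{I}(\bar{\phi}(w))[h]=2\mult(\bar{\phi}(w))\RE\langle(\bar{\phi}(w))_{tr}\mid h_{tr}\rangle_{L^{2}}$ for every $h\in W\oplus X_{-}$. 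To get the formula for $\mult(\bar{\phi}(w))$ I would test this with $h=\bar{\phi}(w)$ (so $\xi=\psi_{w}$, $\RE(\psi_{w},\psi_{w})=\rho_{\psi_{w}}$, $\RE(\psi_{w},\dalfa\psi_{w})=J_{\psi_{w}}$): by Euler's homogeneity identity the left-hand side equals $2\mathcal{I}(\bar{\phi}(w))+\cfstr\iint_{\R^{3}\times\R^{3}}\frac{\rho_{\psi_{w}}\rho_{\psi_{w}}-J_{\psi_{w}}\cdot J_{\psi_{w}}}{|y-z|}$ (the $H^{1}$-quadratic part and the term $\cfstr\int V\rho_{\psi}$ are $2$-homogeneous in $\phi$, the Hartree and current terms $4$-homogeneous), while the right-hand side equals $2\mult(\bar{\phi}(w))$; dividing by $2$ and using $\mathcal{I}(\bar{\phi}(w))=\lambda_{W}$ yields the stated expression (this is exactly \eqref{eq:moltiplicatore} with $\epsilon_{n}=0$, $c=\lambda_{W}$ and $\langle\nabla_{\mathcal{X}_{W}}\mathcal{I},\phi\rangle=0$).

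\emph{Estimate (i).} It is immediate from Proposition~\ref{prop:sup_achieved} (equivalently Proposition~\ref{prop:PalaisSmale2}(i)): the maximizer $\phi_{2}(w)$ has $|(\phi_{2}(w))_{tr}|^{2}_{L^{2}}<\tfrac12$, hence $|(\phi_{1}(w))_{tr}|^{2}_{L^{2}}=1-|(\phi_{2}(w))_{tr}|^{2}_{L^{2}}>\tfrac12$.

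\emph{Estimate (ii).} I would combine two inequalities. First, $\|\phi_{1}(w)\|^{2}_{H^{1}}=|a(\phi_{2}(w))|^{2}\|w\|^{2}_{H^{1}}\le\|w\|^{2}_{H^{1}}$, and since $\lambda_{W}\ge\mathcal{I}\bigl(\bigl(\begin{smallmatrix}w\\ 0\end{smallmatrix}\bigr)\bigr)\ge(1-Z\cfstr\gamma_{T})\|w\|^{2}_{H^{1}}$ (the lower bound established inside the proof of Lemma~\ref{lem:apriori_estimates}, using \eqref{eq:stimaGES}, \eqref{eq:stimaPotenzialeTix} and \eqref{eq:normaTracciaNormaH1}), we get $\|\phi_{1}(w)\|^{2}_{H^{1}}\le\lambda_{W}/(1-Z\cfstr\gamma_{T})$. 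Second, arguing exactly as in the proof of Lemma~\ref{lem:PalaisSmale} (using $V\le0$, the positivity \eqref{eq:correntePositiva}, the pointwise bound $|\psi_{w}|^{2}\le2(|\Lambda_{+}\psi_{w}|^{2}+|\Lambda_{-}\psi_{w}|^{2})$, the estimate \eqref{eq:key_estimate} with $|\rho_{\psi_{w}}|_{L^{1}}=1$, and Lemma~\ref{lem:betterminimizer}), one obtains $\lambda_{W}=\mathcal{I}(\bar{\phi}(w))\le(1+\cfstr\gamma_{T})\|\phi_{1}(w)\|^{2}_{H^{1}}-(1-\cfstr\gamma_{T})\|\phi_{2}(w)\|^{2}_{H^{1}}$, whence $\|\phi_{2}(w)\|^{2}_{H^{1}}\le\frac{(1+\cfstr\gamma_{T})\|\phi_{1}(w)\|^{2}_{H^{1}}-\lambda_{W}}{1-\cfstr\gamma_{T}}\le\frac{\cfstr\gamma_{T}(1+Z)}{(1-\cfstr\gamma_{T})(1-Z\cfstr\gamma_{T})}\lambda_{W}$. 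Adding the two bounds gives $\|\bar{\phi}(w)\|^{2}_{H^{1}}=\|\phi_{1}(w)\|^{2}_{H^{1}}+\|\phi_{2}(w)\|^{2}_{H^{1}}\le\frac{1+\cfstr\gamma_{T}Z}{(1-\cfstr\gamma_{T})(1-\cfstr\gamma_{T}Z)}\lambda_{W}$, as claimed. The only mildly delicate point is absorbing the factor $2$ from $|\psi_{w}|^{2}\le2(|\Lambda_{+}\psi_{w}|^{2}+|\Lambda_{-}\psi_{w}|^{2})$ into the $\tfrac{\cfstr}{2}$ in front of the Hartree term and checking that the two constants recombine to exactly the stated fraction; no analytic ingredient beyond the lemmas already proved is needed.
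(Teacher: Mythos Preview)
Your proposal is correct and follows essentially the same approach as the paper: the paper's proof says only ``we only have to show that item (ii) holds'' and then derives (ii) from the two inequalities $\lambda_{W}\le(1+\cfstr\gamma_{T})\|\phi_{1}(w)\|^{2}_{H^{1}}-(1-\cfstr\gamma_{T})\|\phi_{2}(w)\|^{2}_{H^{1}}$ (as in Lemma~\ref{lem:PalaisSmale}) and $\lambda_{W}\ge(1-\cfstr\gamma_{T}Z)\|\phi_{1}(w)\|^{2}_{H^{1}}$, which is exactly your argument. Your extra details on uniqueness up to phase, the Lagrange multiplier, and (i) simply spell out what the paper leaves implicit, and your route to the bound on $\|\phi_{1}(w)\|^{2}_{H^{1}}$ via $\|\phi_{1}(w)\|^{2}_{H^{1}}\le\|w\|^{2}_{H^{1}}$ and $\bigl(\begin{smallmatrix}w\\0\end{smallmatrix}\bigr)\in\mathcal{X}_{W}$ is in fact cleaner than the paper's, which writes $\bigl(\begin{smallmatrix}\phi_{1}(w)\\0\end{smallmatrix}\bigr)$ (an element not in $\mathcal{X}_{W}$ unless $\phi_{2}(w)=0$) as test point.
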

    
\begin{proof}
	We only have to show that item (ii) holds. 
	
	If $\phi(w)$ is the maximizer for $\mathcal{I}$ in $
	\mathcal{X}_{W}$ we have as in the proof of Lemma
	\ref{lem:PalaisSmale},
	\begin{equation*} 
		\lambda_{W} \leq (1 + \cfstr\gamma_{T}) \|\phi_{1}(w)
		\|^2_{H^1} - (1 - \cfstr\gamma_{T}) \|\phi_{2}(w) \|^2_{H^1}
	\end{equation*}
	Moreover we have
	\begin{equation*}
		\lambda_{W} \geq \mathcal{I}( \left( \begin{smallmatrix}
		\phi_{1}(w) \\ 0\end{smallmatrix} \right) ) \geq \|\phi_{1}(w)
		\|^2_{H^1} + \cfstr \int_{\R^3} V \rho_{\psi_{+,w}} \, dy \geq
		(1 - \cfstr\gamma_{T}Z) \|\phi_{1}(w) \|^2_{H^1}
	\end{equation*}
	where $\psi_{+,w} = \fw^{-1} \left( \begin{smallmatrix}
	\phi_{1}(w) \\ 0\end{smallmatrix} \right)$. Hence we may conclude
	that
	\begin{align*}
		&\|\phi_{2}(w) \|^2_{H^1} \leq \frac{\cfstr\gamma_{T}}{1 -
		\cfstr\gamma_{T}} (1 + Z) \|\phi_{1}(w) \|^2_{H^1}\\
		&\|\phi_{1}(w) \|^2_{H^1} \leq \frac{ \lambda_{W}}{ 1 -
		\cfstr\gamma_{T}Z }
	\end{align*}
	and also
	\begin{equation*} 
		\|\phi_{1}(w) \|^2_{H^1}+ \|\phi_{2}(w) \|^2_{H^1} \leq \frac{1
		+ \cfstr\gamma_{T}Z}{1 - \cfstr\gamma_{T}}
		\norm{\phi_{1}(w)}^{2} \leq \frac{1 + \cfstr\gamma_{T}Z}{(1 -
		\cfstr\gamma_{T})(1 - \cfstr\gamma_{T}Z)} \lambda_{W}.
	\end{equation*}
\end{proof}

\section {Proof of Theorem \ref{thm:main}}
 
In view of the results of Proposition \ref{prop:sup_achieved} it is
convenient to introduce the smooth functional $ \mathcal{F}:
H^1(\R^4_{+}; \C^2) \setminus \{w_{tr} \equiv 0 \} \to \R$
\begin{equation*}
	\mathcal{F}(w) = \mathcal{I} (\phi(w) )
\end{equation*} 
where $\phi(w) = \left( \begin{smallmatrix} a( \phi_2(P(w)) ) P(w) \\
\phi_2(P(w)) \end{smallmatrix} \right) $ and $P(w) =
\frac{w}{|w_{tr}|_{L^2}}$.  Now in view of Proposition
\ref{prop:sup_achieved} we may conclude that
\begin{equation*}
	\Lambda_1 = \inf_{ \substack{ W \subset X_{+} \\ \dim W = 1}} \,
	\sup_{ \phi \in \mathcal{X}_{W} } \mathcal{I}(\phi) =
	\inf_{ \substack { w \in H^1(\R^4_{+}; \C^2) \\
	|w_{tr}|_{L^2} = 1} }
	\mathcal{I}(\bar{\phi}(w)) = \inf_{w \in H^1(\R^4_{+}; \C^2)
	\setminus \{w_{tr} = 0 \}} \mathcal{F}(w)
 \end{equation*}

Let us introduce the constraint manifold $\mathcal{W} \subset
H^1(\R^4_{+}; \C^2)$
\begin{equation*}
	\mathcal{W} = \{ w \in H^1(\R^4_{+}; \C^2) \, : \, G(w) :=
	|w_{tr}|^2_{L^2} - 1 = 0 \}
\end{equation*}
and its tangent space 
\begin{equation*}
	T_{w} \mathcal{W} = \{ h \in H^1(\R^4_{+}; \C^2) \, : \, dG(w)[h]
	= 2 \RE \langle w_{tr}| h_{tr} \rangle_{L^2} = 0 \}
\end{equation*}

Let us compute $d \mathcal{F}(w)[h] = d \mathcal{I}
(\phi(w))[d\phi(w)[h]]$. For $w \in \mathcal{W}$ and $h \in
H^1(\R^4_{+}; \C^2)$, we have
\begin{align*}
	d \phi(w)[h] = \left( \begin{smallmatrix}
	da(\phi_2(w))[d\phi_2(w)[dP(w)[h]] w \\
	d\phi_{2}(w)[dP(w)[h]] \end{smallmatrix} \right) + \left(
	\begin{smallmatrix} a(\phi_2(w)) dP(w)[h] \\ 0 \end{smallmatrix}
	\right).
\end{align*}
Since 
\begin{equation*}
	0= d \mathcal{J}_W (\phi_2(w)) [k] = d \mathcal{I} (\phi(w))
	\left[\left( \begin{smallmatrix} da(\phi_2(w)) [k] w \\ k
	\end{smallmatrix} \right) \right] \qquad \forall k \in H^1
\end{equation*} 
we have
\begin{align*}
	d \mathcal{F}(w) [h] &= d \mathcal{J}_W (\phi_2(w))
	[d\phi_2(w)[dP(w)[h]] + d \mathcal{I} (\phi(w)) \left[\left(
	\begin{smallmatrix} a(\phi_2(w)) dP(w)[h] \\ 0 \end{smallmatrix}
	\right) \right] \\
	&= a(\phi_2(w) ) d \mathcal{I} (\phi(w)) \left[\left(
	\begin{smallmatrix} dP(w)[h] \\ 0 \end{smallmatrix} \right)
	\right] .
\end{align*}
where $dP(w)[h] = h - w \RE \langle w_{tr} \mid h_{tr} \rangle_{L^2}$
if $w \in \mathcal{W}$.

Since $ d \mathcal{F}(w) [w] = 0$ for any $w \in \mathcal{W}$, it is
easy to see that $\mathcal{W}$ is indeed a natural constraint for
$\mathcal{F}$.  Hence in particular by Ekeland's variational
principle, there exists a Palais-Smale, minimizing sequence $\{ w_n \}
\in \mathcal{W}$, namely $\mathcal{F}(w_n) \to \Lambda_{1}$ and $\|
d\mathcal{F}(w_n) \| \to 0$.

Now setting $\phi_n = \phi(w_n) = \left( \begin{smallmatrix}
\phi_{1,n} \\ \phi_{2,n} \end{smallmatrix} \right) $, with $\phi_{1,n}
= a_n w_n$, $a_n = a(\phi_2(w_n) )$, $\phi_{2,n} = \phi_2(w_n) $ and
$\mult_n = \mult(\phi_n) $ and defining the linear continuous
functional $\mathcal{T}_n \colon H^{1}(\R^{4}_+ ; \C^{2}) \to 
\R$
\begin{equation}
	\label{eq:weak_eq_n}
	\mathcal{T}_n [h] = d \mathcal{I}(\phi_n) [ \left(
	\begin{smallmatrix} h \\ 0 \end{smallmatrix} \right) ] - 2 \mult_n
	\RE \langle a_n ( w_n)_{tr} \mid h_{tr} \rangle_{L^2}
\end{equation}
in view of Corollary \ref{cor:sup_achieved_I} we have that
$\mathcal{T}_n [h] = 0$ for any $h \in \spann \{ w_n \}$ and $n \in
\mathbb{N}$.

On the other hand for any $h \in H^{1}(\R^{4}_+ ; \C^{2})$
\begin{equation*}
	d \mathcal{F}(w_n) [h] = a_n d \mathcal{I} (\phi_n) \left[\left(
	\begin{smallmatrix} dP(w_n)[h] \\ 0 \end{smallmatrix} \right)
	\right] = a_n \mathcal{T}_n [dP(w_n)[h]] = a_n \mathcal{T}_n [h]
\end{equation*}
and since $\| d \mathcal{F}(w_n) \| \to 0$ and $a_n > \frac{1}{2}$ we
may conclude that $\mathcal{T}_{n} \to 0 $ strongly.


Since the sequence $\{ \phi_{n} \}$ is bounded in $H^1(\R^{4}_+
;\C^{4} )$ (it follows from Corollary \ref{cor:sup_achieved_I} since
$\lambda_{W_{n}} \to \Lambda_{1}$) we get that, up to a subsequence,
$\phi_n \rightharpoonup \phi $ weakly in $H^1$ and $ \mult_n \to
\mult$, and hence,
\begin{equation*} 
	d \mathcal{I}(\phi) [h] = 2 \mult \RE \langle \phi_{tr} ,h_{tr}
	\rangle_{L^2} \qquad \forall h \in H^1(\R^{4}_+ ; \C^{4} ).
\end{equation*}
(since $d \mathcal{I}(\phi_{n})[h] \to d \mathcal{I}(\phi)[h]$ if
$\phi_{n} \rightharpoonup \phi$, see \eqref{eq:convergenceto0}).
 
To conclude the proof of Theorem \ref{thm:main} we need to show that
$|\phi_{tr}|_{L^2} = 1$, that is a strong convergence in $L^2$ of
$(\phi_n)_{tr}$, in fact we will prove strong convergence of $\phi_n$
in $H^1$.


First note that we can assume that
\begin{equation}
	\label{eq:Hessian_condition}
	\liminf_{n \to + \infty} d^2 \mathcal{F}(w_n)[h ; h] \geq 0 \qquad
	\forall h \in H^1(\R^{4}_+ ; \C^{2} ).
\end{equation}
This is an adaptation of of theorem 2.6 in Borwein and Preiss 
\cite{zbMATH04028246} with $p = 2$, $\epsilon = \frac{1}{n}$ and 
$\lambda = 1$ (see also \cite{zbMATH06266957})
which states that one can find a minimizing sequence such that
\begin{align*}
	&\mathcal{F}(w_{n}) \leq \inf_{w \in \mathcal{W}} \mathcal{F}(w) + 
	\frac{1}{n}, \\
	&\mathcal{F}(w_{n}) + \frac{1}{{n}} \Delta(w_{n}) \leq
	\mathcal{F}(w) + \frac{1}{{n}} \Delta(w) \qquad \text{for all } w
	\in \mathcal{W}
\end{align*}
where $\Delta(y) = \sum_{k=1}^{\infty} \beta_{k} \norm{y -
y_{k}}_{H^{1}}^{2}$ for a (convergent) sequence of points $y_{k}$ and
reals $\beta_{k} \geq 0$ such that $\sum_{k=1}^{\infty} \beta_{k} =
1$.  The above relation shows that $w = w_{n}$ is a minimizer for
$G_{n}(w) = \mathcal{F}(w) + \frac{1}{{n}} \Delta(w)$ and hence
\begin{equation*}
	0 \leq d^{2}G_{n}(w_{n})[h,h] = d^{2}\mathcal{F}(w_{n})[h,h] +
	\frac{1}{{n}} \langle h \mid h \rangle_{H^{1}}.
\end{equation*}

Now with the additional information \eqref{eq:Hessian_condition} on
the second variations, we prove the following bound on the Lagrange
multiplier $\mult$, that it will be a key point to prove strong
convergence of the minimizing sequence.  We have

\begin{lem}
	$\mult < 1$
\end{lem}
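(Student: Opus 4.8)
The plan is to argue by contradiction: suppose $\mult \geq 1$ and derive a contradiction from the Hessian condition \eqref{eq:Hessian_condition}. The idea is to exhibit a test direction $h \in H^{1}(\R^{4}_{+};\C^{2})$ along which the second variation $d^{2}\mathcal{F}(w_{n})[h;h]$ becomes strictly negative, at least in the limit, thereby contradicting \eqref{eq:Hessian_condition}. Since $\mathcal{F}(w) = \mathcal{I}(\phi(w))$ is built from the functional $\mathcal{I}$ which is negative-definite in the $\phi_{2}$ component (the $-\|\phi_{2}\|^{2}_{H^{1}}$ term controls the Coulomb and current interactions by the Tix and Kato inequalities), the ``bad'' direction should be one that tests the $\phi_{1}$ component together with the constraint. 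Concretely, I would compute $d^{2}\mathcal{F}(w_{n})[h;h]$ using the chain rule together with the optimality $d\mathcal{J}_{W}(\phi_{2}(w)) = 0$ and the strict concavity of $\mathcal{J}_{W}$ in $\phi_{2}$ from Proposition \ref{prop:concavity}; after eliminating the $\phi_{2}$-variation (which only helps, by concavity) one is left with a quadratic form in the $\phi_{1}$-direction essentially of the shape $2\RE\langle\phi_{1,n}\mid h\rangle_{H^{1}}$-type terms minus $2\mult_{n}\RE\langle(\phi_{1,n})_{tr}\mid h_{tr}\rangle_{L^{2}}$-type terms, plus lower-order interaction pieces.

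The natural test direction is $h$ proportional to $\phi_{1,n}$ itself (or to $w_{n}$), which is ruled out because it is tangent to the constraint and makes $\mathcal{T}_{n}$ vanish; so instead one takes $h$ with $(h)_{tr}$ aligned with $(\phi_{1,n})_{tr}$ in $L^{2}$ but with $h$ \emph{not} equal to the harmonic-type extension $\phi_{1,n}$. Using Lemma \ref{lem:betterminimizer}, the extension $\phi_{1,n}$ is precisely the $H^{1}$-norm minimizer among all $H^{1}$-functions with the given trace, so replacing it by a cheaper competitor lowers $\|\cdot\|^{2}_{H^{1}}$ while keeping the trace, hence keeping the $L^{2}$ trace term fixed. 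This produces a direction in which the leading term behaves like $(\|h\|^{2}_{H^{1}} - \mult\,|h_{tr}|^{2}_{L^{2}})$ with $\|h\|^{2}_{H^{1}}$ pushed strictly below $|h_{tr}|^{2}_{H^{1/2}}$, and since $|h_{tr}|^{2}_{H^{1/2}} \geq |h_{tr}|^{2}_{L^{2}}$ by \eqref{eq:stimaNormaL2NormaH1}, if $\mult \geq 1$ this combination is strictly negative. More precisely, I expect to test with a one-parameter family $h_{t}$ interpolating between $\phi_{1,n}$ and such a cheaper extension and differentiate twice, using \eqref{eq:normaTracciaNormaH1} for the strict inequality $\|w\|^{2}_{H^{1}} > |w_{tr}|^{2}_{H^{1/2}}$ whenever $w$ is not the canonical extension.

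The interaction terms (the Coulomb self-energy, the current term, and the external potential $V$) must be controlled along this direction: by the estimates in Remark \ref{rem:Tix_convolution}, Lemma \ref{lem:key_estimate} and \eqref{eq:stimaPotenzialeTix}, together with the constraint $Z\cfstr\gamma_{T} < 1$ and $\cfstr(Z+1)\gamma_{T} \leq 1$ already used in Proposition \ref{prop:concavity}, these contributions are bounded by a small multiple of $\|h\|^{2}_{H^{1/2}}$ and do not destroy the strict sign of the leading quadratic form. One also needs $\mult_{n} \to \mult$ and the boundedness of $\phi_{n}$ in $H^{1}$ (both available from Corollary \ref{cor:sup_achieved_I} and the discussion preceding the lemma) so that the limiting inequality $\liminf_{n} d^{2}\mathcal{F}(w_{n})[h;h] \geq 0$ can be confronted with a bound of the form $d^{2}\mathcal{F}(w_{n})[h;h] \leq -\delta' + o(1)$ for a fixed admissible $h$ and some $\delta' > 0$ depending on how far below $|h_{tr}|^{2}_{H^{1/2}}$ we pushed $\|h\|^{2}_{H^{1}}$.

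The main obstacle I anticipate is bookkeeping the chain-rule expansion of $d^{2}\mathcal{F}(w_{n})$ cleanly: because $\mathcal{F}$ is a composition involving the implicitly-defined map $w \mapsto \phi_{2}(w)$ and the normalization $P(w)$, the second variation picks up cross terms through $d\phi_{2}$ and $d^{2}P$. The way around this is exactly the trick already used in Proposition \ref{prop:sup_achieved}: on the constraint manifold $\mathcal{W}$ and using $d\mathcal{J}_{W}(\phi_{2}(w)) = 0$, the first variation simplifies to $d\mathcal{F}(w)[h] = a(\phi_{2}(w))\, d\mathcal{I}(\phi(w))[(\begin{smallmatrix} dP(w)[h]\\ 0\end{smallmatrix})]$, and differentiating once more the $\phi_{2}$-variation contributes a term of definite sign (non-positive, by the coercive negativity of $d^{2}\mathcal{J}_{W}$), so it can be discarded when proving an \emph{upper} bound on $d^{2}\mathcal{F}$. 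What remains is a quadratic form purely in the $\phi_{1}$-slot evaluated at $\phi_{n}$, to which the extension-minimality of Lemma \ref{lem:betterminimizer} and the elementary inequality \eqref{eq:stimaNormaL2NormaH1} apply directly, yielding the strict negativity and the desired contradiction.
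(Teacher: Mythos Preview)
Your proposal has a genuine gap. The core of your argument is to make the leading quadratic $\|h\|^{2}_{H^{1}} - \mult\,|h_{tr}|^{2}_{L^{2}}$ strictly negative by a clever choice of extension, but this cannot succeed at the borderline $\mult = 1$: by \eqref{eq:stimaNormaL2NormaH1} one always has $\|h\|^{2}_{H^{1}} \geq |h_{tr}|^{2}_{L^{2}}$, so the leading term is $\geq 0$ when $\mult = 1$, and no choice of $h$ flips it. (Your ``cheaper competitor'' idea is also backwards: by Lemma~\ref{lem:betterminimizer} the harmonic-type extension already \emph{minimizes} $\|\cdot\|_{H^{1}}$ for a given trace, so any other extension with the same trace has \emph{larger} $H^{1}$ norm, not smaller.) Thus your scheme can only exclude $\mult > 1$, not $\mult = 1$.

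What is missing is the role of the external Coulomb potential and of the hypothesis $Z > 4$ in Theorem~\ref{thm:main}. In the paper, one chooses a scaled, low-frequency test direction $h_{\epsilon}(x,y) = e^{-x}\epsilon^{3/2}\eta(\epsilon|y|)$ with $|\eta|_{L^{2}} = 1$ radial. Then $\|h_{\epsilon}\|^{2}_{H^{1}} - |(h_{\epsilon})_{tr}|^{2}_{L^{2}} = O(\epsilon^{2})$, so the leading contribution to the bound on $d^{2}\mathcal{F}(w_{n})[h_{\epsilon};h_{\epsilon}]$ is $2a^{2}(1-\mult)$. The decisive point is the next order: after scaling, the attractive potential $V = -Z/|x|$ contributes $-Z\epsilon\int\rho_{\eta}/|y|$ at order $\epsilon$, while the self-repulsion term (four times the density-density interaction via \eqref{eq:KatoConvoluta} and radiality) contributes at most $+4\epsilon\int\rho_{\eta}/|y|$. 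Hence for $Z > 4$ the order-$\epsilon$ term is strictly negative, and \eqref{eq:Hessian_condition} yields
\[
0 \leq 2a^{2}(1-\mult) - c\,\epsilon + O(\epsilon^{2}),
\]
which forces $\mult < 1$ with a quantitative gap. In your proposal the interaction terms are dismissed as small perturbations that ``do not destroy the strict sign'', but in fact they \emph{are} the source of the strict sign; without invoking $V$ and the constraint $Z > 4$ the conclusion $\mult < 1$ (as opposed to $\mult \leq 1$) is simply not available.
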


\begin{proof}
	 Since $w_{n}$ is bounded we can assume that $w_{n}
	 \rightharpoonup w$ in $H^{1}$.  Take $h \in H^1(\R^4_{+}; \C^2)$
	 such that $h_{tr} \in H^1(\R^3; \C^2)$ and $\RE \langle w_{tr} |
	 h_{tr} \rangle_{L^2} =0 $.

	We set $h_{1,n} = dP(w_n)[h]$ and $h_{2,n}=
	d\phi_2(w_n)[dP(w_n)[h]]$, then we have
	\begin{align*}
		d^2 \mathcal{F}(w_n) &[h ; h] = d^2 \mathcal{I} (\phi_n)
		\left[ \left( \begin{smallmatrix} da(\phi_{2,n})[h_{2,n} ] w_n
		\\ h_{2,n} \end{smallmatrix} \right); \left(
		\begin{smallmatrix} a_n h_{1,n} \\ 0 \end{smallmatrix}
		\right)\right] \\
		& + d^2 \mathcal{I} (\phi_n) \left[ \left( \begin{smallmatrix}
		a_n h_{1,n} \\ 0 \end{smallmatrix} \right); \left(
		\begin{smallmatrix} a_n h_{1,n} \\ 0 \end{smallmatrix}
		\right)\right] + d \mathcal{I} (\phi_n) \left[\left(
		\begin{smallmatrix} da(\phi_{2,n})[h_{2,n} ] h_{1,n} \\ 0
		\end{smallmatrix} \right) \right] \\
		& + d \mathcal{I} (\phi_n) \left[\left( \begin{smallmatrix}
		a_n d^2P(w_n)[h;h] \\ 0 \end{smallmatrix} \right) \right] =
		(I) + (II) + (III).
	\end{align*}
	where 
	\begin{align*}
		&(I) = d^2 \mathcal{I} (\phi_n) \left[ \left(
		\begin{smallmatrix} da(\phi_{2,n})[h_{2,n} ] w_n \\ h_{2,n}
		\end{smallmatrix} \right); \left( \begin{smallmatrix} a_n
		h_{1,n} \\ 0 \end{smallmatrix} \right)\right] + d \mathcal{I}
		(\phi_n) \left[\left( \begin{smallmatrix}
		da(\phi_{2,n})[h_{2,n} ] h_{1,n} \\ 0 \end{smallmatrix}
		\right) \right]\\
		&(II) = d^2 \mathcal{I} (\phi_n) \left[ \left(
		\begin{smallmatrix} a_n h_{1,n} \\ 0 \end{smallmatrix}
		\right); \left( \begin{smallmatrix} a_n h_{1,n} \\ 0
		\end{smallmatrix} \right)\right] \\
		& (III) = d \mathcal{I} (\phi_n) \left[\left(
		\begin{smallmatrix} a_n d^2P(w_n)[h;h] \\ 0 \end{smallmatrix}
		\right) \right] .
	\end{align*}
	In view of Proposition \ref{prop:sup_achieved} for any $w \in
	\mathcal{W}$ we have for all $h \in H^{1}$
	\begin{equation*}
		h_{2,n} = d\phi_{2}(w)[dP(w)[h] ]= - (d_u F(w, \phi_2(w))^{-1}
		[(d_{w} F(w, \phi_2(w))[h ])],
	\end{equation*} 
	where the map $F : H^1 \setminus \{w_{tr} = 0 \} \times H^1 \to
	H^{-1}$ is given by
	\begin{equation*}
		F(w, u)[k] = d \mathcal{I}\left(\left( \begin{smallmatrix}
		a(u) P( w )\\ u \end{smallmatrix} \right) \right) \left[
		\left( \begin{smallmatrix} da(u)[k] P(w) \\ k
		\end{smallmatrix} \right) \right] \qquad \forall k \in H^1,
	\end{equation*}
	let compute the operator $d_{w} F(w, u): H^1 \to H^{-1}$, for $w
	\in \mathcal{W}$ and any for $h_1, h_2 \in H^1$ we have
	\begin{align*}
		(d_{w} F(w, u)[h_1 ])[h_2] = & d^2 \mathcal{I}\left(\left(
		\begin{smallmatrix} a(u) w \\ u \end{smallmatrix} \right)
		\right) \left[ \left( \begin{smallmatrix} a(u) dP(w) [h_1] \\
		0 \end{smallmatrix} \right) ; \left( \begin{smallmatrix}
		da(u)[h_2] w \\ h_2 \end{smallmatrix} \right) \right] \\
		&+ d \mathcal{I}\left(\left( \begin{smallmatrix} a(u) w \\ u
		\end{smallmatrix} \right) \right) \left[ \left(
		\begin{smallmatrix} da(u)[h_2] dP(w)[h_1] \\ 0
		\end{smallmatrix} \right) \right].
	\end{align*}
	Hence we have
	\begin{equation*}
		- (d_u F(w_n, \phi_{2,n}) [h_{2,n} ]) [h_{2,n} ] = (d_{w}
		F(w_n, \phi_{2,n})[h])[h_{2,n} ] = (I) .
	\end{equation*}
	Recalling that 
	\begin{align*}
		(d_u F (w, u)[k]) [k] = & \, d^2 \mathcal{I} \left(\left(
		\begin{smallmatrix} a (u) w \\ u \end{smallmatrix} \right)
		\right) \left[ \left( \begin{smallmatrix} da (u) [k] w \\ k
		\end{smallmatrix} \right) ; \left( \begin{smallmatrix} da (u)
		[k] w \\ k \end{smallmatrix} \right) \right] \\
		&+ d \mathcal{I} \left( \left( \begin{smallmatrix} a ( u) w \\
		u \end{smallmatrix} \right) \right) \left[
		\left(\begin{smallmatrix} d^2 a (u) [k; k] w \\
		0\end{smallmatrix} \right)\right] = d^2 \mathcal{J}_{W}(u)[k ;
		k] \qquad \forall k \in H^1 .
	\end{align*}
	in view of Proposition \ref{prop:concavity}, we get
	\begin{equation*}
		(I) = (d_{w} F(w_n, \phi_{2,n})[h])[ h_{2,n}] = - d^2
		\mathcal{J}_{W}(\phi_{2,n})[h_{2,n} ; h_{2,n} ] \geq \delta
		\|h_{2,n}\|^2_{H^1}.
	\end{equation*}
	On the other hand, we have
	\begin{align*}
		(I) = & a_n d^2 \mathcal{I} (\phi_n) \left[ \left(
		\begin{smallmatrix} 0 \\ h_{2,n} \end{smallmatrix} \right);
		\left( \begin{smallmatrix} h_{1,n} \\ 0 \end{smallmatrix}
		\right)\right] - da( \phi_{2,n} )[h_{2,n} ] d^2 \mathcal{I}
		(\phi_n) \left[ \left( \begin{smallmatrix} 0 \\ \phi_{2,n}
		\end{smallmatrix} \right); \left( \begin{smallmatrix}
		h_{1,n} \\ 0 \end{smallmatrix} \right)\right] \\
		&+ da( \phi_{2,n} )[h_{2,n} ] \left( d^2 \mathcal{I} (\phi_n )
		\left[ \phi_n; \left( \begin{smallmatrix} h_{1,n} \\ 0
		\end{smallmatrix} \right)\right] + d \mathcal{I}(\phi_n)
		\left[ \left( \begin{smallmatrix} h_{1,n}\\ 0
		\end{smallmatrix} \right) \right] \right).
	\end{align*}
	Then, since $\langle (w_n)_{tr} | h_{1,n} \rangle_{L^2} = \langle
	(w_n)_{tr} |dP(w_n)[h] \rangle_{L^2} =0$, by Corollary
	\ref{cor:sup_achieved_I} we have
	\begin{equation*}
		d \mathcal{I} (\phi_n ) \left[ \left( \begin{smallmatrix}
		h_{1,n} \\ 0 \end{smallmatrix} \right) \right] = \mathcal{T}_n
		(h)
	\end{equation*}
	and recalling that $h_{1,n} =dP(w_n)[h] = h - w_n \RE \langle
	(w_n)_{tr} | h_{tr} \rangle_{L^2} \to h $, as $n \to + \infty$
	strongly in $H^1$, we set $\xi = \fw^{-1} \left(
	\begin{smallmatrix} h_{tr} \\ 0 \end{smallmatrix} \right) $, by
	H\"older's and Hardy's inequalities, we get
	\begin{align*}
		d^2 \mathcal{I} (\phi_n ) & \left[ \phi_n; \left(
		\begin{smallmatrix} h_{1,n} \\ 0 \end{smallmatrix} \right)
		\right] = d \mathcal{I} (\phi_n ) \left[ \left(
		\begin{smallmatrix} h_{1,n} \\ 0 \end{smallmatrix} \right)
		\right] \\
		&+ 4 \cfstr \iint_{\R^3 \times \R^3 } \frac{\rho_{\psi_n}(y)
		\RE( \psi_n, \xi)(z) - J_{\psi_n}(y) \cdot \RE (\psi_n,
		\dalfa\xi)(z) } {|y-z | } \, dy \, dz + o_n(1) \\
		&\leq \mathcal{T}_n (h) + 8 \cfstr \iint_{\R^3 \times \R^3 }
		\frac{\rho_{\psi_n}(y) |\psi_n|(z) | \xi|(z) } {|y-z | } \, dy
		\, dz + o_n(1) \\
		&\leq \mathcal{T}_n (h) + C \int_{\R^3 } \rho_{\psi_n}(y)
		\left( \int_{\R^3 } \frac{ |\xi|^2(z) } {|y-z |^2 } \, dz
		\right)^{1/2} dy + o_n(1) \\
		& \leq \mathcal{T}_n (h) + C | \nabla \xi |_{L^2} + o_n(1).
	\end{align*}
	and analogously, by H\"older and Hardy's inequalities, we
	have
	\begin{equation*}
		d^2 \mathcal{I} (\phi_n) \left[ \left( \begin{smallmatrix} 0
		\\ h_{2,n} \end{smallmatrix} \right); \left(
		\begin{smallmatrix} h_{1,n} \\ 0 \end{smallmatrix}
		\right)\right] \leq C \| h_{2,n} \|_{H^1} ( | \nabla \xi
		|_{L^2} + o_n(1))
	\end{equation*}
	and 
	\begin{equation*}
		\left| d^2 \mathcal{I} (\phi_n) \left[ \left(
		\begin{smallmatrix} 0 \\ \phi_{2,n} \end{smallmatrix}
		\right); \left( \begin{smallmatrix} dP(w_n)[h] \\ 0
		\end{smallmatrix} \right)\right] \right| \leq C ( | \nabla \xi
		|_{L^2} + o_n(1) )
	\end{equation*}
	for some constant $C >0$ that may change from line to line.
 
	Hence, since $da( \phi_{2,n} )[h_{2,n} ] \leq |(h_{2,n})_{tr}
	|_{L^2} \leq \|h_{2,n}\|_{H^1} $ and $|a_n| \leq 1$, we get
	\begin{equation*}
		\delta \|h_{2,n}\|^2_{H^1} \leq (I) \leq C \|h_{2,n}\|_{H^1} (
		| \nabla \xi |_{L^2} + o_n(1))
	\end{equation*}
	namely
	\begin{equation*}
		\|h_{2,n}\|_{H^1} \leq C ( | \nabla \xi |_{L^2} + o_n(1))
	\end{equation*}
	and we may conclude that
	\begin{equation*}
		(I) \leq C | \nabla \xi |^2_{L^2} + o_n(1).
	\end{equation*}
	Now, by Remark \ref{rem:pos_current_term} and H\"older inequality,
	we have
	\begin{multline*}
		(II) \leq 2 a_n^2 \| h\|^2_{H^1} + 2 a_n^2 \cfstr
		\int_{\R^{3}} V \rho_{\xi} \, dy \\
		+ 8 a_n^2 \cfstr \iint_{\R^3 \times \R^3 }
		\frac{\rho_{\psi_n}(y) \rho_{\xi}(z) } {|y-z | } \, dy \, dz +
		o_n(1)
	\end{multline*}
	Moreover, recalling that 
	\begin{align*}
		d^2P(w_n)[h;h] = & 3 |\RE \langle ( w_n)_{tr} | h_{tr}
		\rangle_{L^2}|^2 w_n - 2 \RE \langle ( w_n)_{tr} | h_{tr}
		\rangle_{L^2} h - |h_{tr}|^2_{L^2} w_n
	\end{align*}
	we have by Corollary \ref{cor:sup_achieved_I}
	\begin{align*}
		(III) &= \mult_n 2 a_n^2 \RE\langle ( w_n)_{tr} |
		(d^2P(w_n)[h;h] )_{tr} \rangle_{L^2} + a_n
		\mathcal{T}_n(d^2P(w_n)[h;h])\\
		&= \mult_n 2 a_n^2 ( |\RE \langle ( w_n)_{tr} | h_{tr}
		\rangle_{L^2}|^2 - |h_{tr}|^2_{L^2} ) + o_n(1)\\
		& = - 2 a_n^2 \mult_n |h_{tr}|^2_{L^2} + o_n(1).
	\end{align*}

	Collecting the estimates above we get
	\begin{align*}
		d^2 \mathcal{F}(w_n) [h ; h] \leq & 2 a_n^2 \left( \| h
		\|^2_{H^1} - \mult_n |h_{tr}|^2_{L^2} \right)+ 2 a_n^2 \cfstr
		\int_{\R^{3}} V \rho_{\xi} \, dy \\
		&+ 8 a_n^2 \cfstr \iint_{\R^3 \times \R^3 }
		\frac{\rho_{\psi_n}(y) \rho_{\xi}(z) } {|y-z | } \, dy \, dz +
		C | \nabla \xi |^2_{L^2} + o_n(1)
	\end{align*}

	Now, for fixed $\epsilon >0$ we take $h_{\epsilon} = \text{e}^{-x}
	\epsilon^{3/2} \eta (\epsilon |y|) $ with $\eta \in
	H^{5/2}(\mathbb{R}^{3}; \C^2 )$, $\eta(y) = \eta(\abs{y})$ and
	$|\eta|_{L^2} = 1$.

	Note that 
	\begin{equation*}
		\| h_{\epsilon} \|^2_{H^1} = \frac{1}{2} \epsilon^{2} |\nabla
		\eta |^2_{L^2} + |\eta |^2_{L^2}
	\end{equation*}
	and setting $\xi_{\epsilon} = U_{_{\text{FW}}}^{-1} \left(
	\begin{smallmatrix} (h_{\epsilon})_{tr} \\ 0\end{smallmatrix}
	\right)$ we have
	\begin{equation*}
		\xi_{\epsilon}(y) = \epsilon^{3/2} U_{_{\text{FW}}}^{-1}
		\left( \begin{smallmatrix} \eta (\epsilon |y|) \\
		0\end{smallmatrix} \right) = \epsilon^{3/2} \left(
		\begin{smallmatrix} \mathcal{F}^{-1} [ a_{+}(\epsilon p)
		\hat{\eta}] \\ \mathcal{F}^{-1} [ a_{-} (\epsilon p)
		\frac{\sigma \cdot p}{|p|} \hat{\eta}] \end{smallmatrix}
		\right) (\epsilon y)
	\end{equation*}
	hence in particular
	\begin{align*}
		\rho_{ \xi_{\epsilon}}(y) = & \epsilon^3 | \mathcal{F}^{-1} [
		a_{+}(\epsilon p) \hat{\eta}]|^2 (\epsilon y) + \epsilon^3 |
		\mathcal{F}^{-1}[ a_{-} (\epsilon p) \frac{\sigma \cdot
		p}{|p|} \hat{\eta}]|^2(\epsilon y) \\
		=& \epsilon^3 \rho_{\eta}(\epsilon |y|) + \epsilon^3
		\zeta_{\epsilon} (\epsilon y)
	\end{align*}
	where $\rho_{\eta} (y)= |\eta|^2(y)$ and
	\begin{align*}
		\zeta_{\epsilon} ( y) =& | \mathcal{F}^{-1} [ (a_{+}(\epsilon
		p) -1) \hat{\eta}]|^2 (y) + | \mathcal{F}^{-1}[ a_{-}
		(\epsilon p) \frac{\sigma \cdot p}{|p|} \hat{\eta}]|^2(y) \\
		& + 2 \RE(\eta, \mathcal{F}^{-1} [ (a_{+}(\epsilon p) -1)
		\hat{\eta} ]) (y)
	\end{align*}
	 Recalling that $a_{\pm}(\epsilon p) = \sqrt{\frac{1}{2}( 1 \pm 1
	 / \lambda(\epsilon p))} $, and $\lambda(p)= \sqrt{ |p|^2 + 1} $,
	 we have
	\begin{equation*}
		|a_{+} (\epsilon p) - 1| = | \left(\frac{\lambda(\epsilon p) +
		1 }{2 \lambda(\epsilon p)} \right)^{\frac{1}{2}} - 1 | \leq
		\frac{|1 - \lambda(\epsilon p)|}{2 \lambda(\epsilon p)} \leq
		\epsilon^2 |p|^2
	\end{equation*}
	\begin{equation*}
		| a_{-} (\epsilon p) | = \left(\frac{\lambda(\epsilon p) - 1
		}{2 \lambda(\epsilon p)} \right)^{\frac{1}{2}} \leq \epsilon
		|p|
	\end{equation*}
	we have
	\begin{equation}
		\label{eq:stimeApm}
		\begin{split}
			&| (a_{+}(\epsilon p) -1) \hat{\eta} |_{L^2} \leq C
			\epsilon^2 | |p|^2 \hat{\eta} |_{L^2} \\
			&| a_{-} (\epsilon p) \hat{\eta} |_{L^2} \leq C \epsilon |
			|p| \hat{\eta} |_{L^2}
		\end{split}
	\end{equation}
	Therefore we get
	\begin{align*}
		d^2 \mathcal{F}&(w_n) [h_{\epsilon} ; h_{\epsilon}] \leq 2
		a_n^2 ( 1 - \mult_n) \\
		&+ 2 a_n^2 \cfstr \int_{\R^{3}} V(y) \rho_{\eta}(\epsilon |y|) \,
		\epsilon^3 dy + 8 a_n^2 \cfstr \iint_{\R^3 \times \R^3 }
		\frac{\rho_{\psi_n}(y) \rho_{\eta}(\epsilon |z|) \, \epsilon^3
		} {|y-z | } \, dy \, dz \\
		& + 2 a_n^2 \cfstr \int_{\R^{3}} V(y)
		\zeta_{\epsilon}(\epsilon y) \, \epsilon^3 dy + 8 a_n^2 \cfstr
		\iint_{\R^3 \times \R^3 } \frac{\rho_{\psi_n}(y)
		\zeta_{\epsilon}(\epsilon z) \, \epsilon^3 } {|y-z | } \, dy
		\, dz \\
		& + C \epsilon^2 | \nabla \eta |^2_{L^2} + o_n(1)
	\end{align*}
	Here, using \eqref{eq:Kato} we get
	\begin{align*}
		\int_{\R^{3}} V(y) \zeta_{\epsilon}(\epsilon y)\, \epsilon^3
		dy &= -Z \epsilon \int_{\R^{3}} \frac{\zeta_{\epsilon}(
		y)}{\abs{y}} \, dy \\
		&= -Z \epsilon \int_{\R^{3}} \frac{ \abs{ \mathcal{F}^{-1} [
		(a_{+}(\epsilon p) -1) \hat{\eta}]}^2 (y)}{\abs{y}} \, dy \\
		&\quad -Z \epsilon \int_{\R^{3}} \frac{ \abs {
		\mathcal{F}^{-1}[ a_{-} (\epsilon p) \frac{\sigma \cdot
		p}{|p|} \hat{\eta}]}^2(y)}{\abs{y}} \, dy \\
		&\quad - 2 Z \epsilon \int_{\R^{3}} \frac{\RE(\eta,
		\mathcal{F}^{-1} [ (a_{+}(\epsilon p) -1) \hat{\eta} ])
		(y)}{\abs{y}} \, dy \\
		&\leq \epsilon C \abs{\mathcal{F}^{-1}[(a_{+}(\epsilon p) -1)
		\hat{\eta}}_{H^{1/2}}^{2}
		+ \epsilon C \abs{\mathcal{F}^{-1}[
		a_{-} (\epsilon p) \frac{\sigma \cdot p}{|p|}
		\hat{\eta}]}_{H^{1/2}}^{2} \\
		&\qquad  + \epsilon C \abs{\mathcal{F}^{-1}[(a_{+}(\epsilon p) -1)
		\hat{\eta}]}_{H^{1/2}} \abs{\eta}_{H^{1/2}}
		\\
		&\leq \epsilon^{4} C \abs{\abs{p}^{5/2}
		\hat{\eta}]}_{L^{2}}^{2} + \epsilon^{3} C \abs{\abs{p}^{3/2}
		\hat{\eta}]}_{L^{2}}^{2} + \epsilon^{3} C \abs{\abs{p}^{5/2}
		\hat{\eta}]}_{L^{2}} \abs{\eta}_{H^{1/2}}
	\end{align*}
	Since for any radial function $\rho \in L^1(\R^3; \R_{+})$ and for
	any $z \in \R^3$ we have
	\begin{equation*}
		\int_{\R^3 } \frac{ \rho(y) } {|y - z|} \, dy \leq \int_{\R^3}
		\frac{\rho(y)} {|y| } \, dy.
	\end{equation*}
	we deduce
	\begin{multline*}
		\iint_{\R^3 \times \R^3 } \frac{\rho_{\psi_n}(y)
		\rho_{\eta}(\epsilon |z|) \, \epsilon^3 } {|y-z | } \, dy \,
		dz = \int_{\mathbb{R}^{3}} \rho_{\psi_{n}}(y) \left(
		\int_{\mathbb{R}^{3}} \frac{\rho_{\eta}(\epsilon\abs{z})
		\epsilon^{3}}{\abs{y - z}} \, dz \right) \, dy \\
		\leq \epsilon \abs{\rho_{\psi_{n}}}_{L^{1}}
		\int_{\mathbb{R}^{3}} \frac{\rho_{\eta}(\abs{z})}{\abs{z}} \,
		dz
	\end{multline*}
	and, by Lemma \ref{lem:key_estimate}
	\begin{equation*}
		\iint_{\R^3 \times \R^3 } \frac{\rho_{\psi_n}(y)
		\zeta_{\epsilon}(\epsilon z) \, \epsilon^3 } {|y-z | } \, dy
		\, dz \leq C \abs{\psi_{n}}_{H^{1/2}}^{2}
		\abs{\zeta_{\epsilon}}_{L^{1}} \leq C
		\epsilon^{2}\abs{\eta}_{H^{2}}^{2} + o(\epsilon^{2})
	\end{equation*}
	Then, by \eqref{eq:Hessian_condition} and Lemma
	\ref{lem:key_estimate} we get
	\begin{align*}
		0 \leq & \liminf_{n \to + \infty} d^2 \mathcal{F}(w_n)
		[h_{\epsilon} ; h_{\epsilon}] \\
		&\leq 2 a^2 ( 1 - \mult) + 2 a^2 \cfstr \int_{\R^{3}}
		\left(-\frac{Z\epsilon}{\abs{y}} + \frac{ 4 \epsilon} {|y | }
		\right) \rho_{\eta}( |y|) \,dy \\
		&\quad + C \epsilon^2 | \eta |^2_{H^2} + o(\epsilon^{2})
	\end{align*}
	where $a = \lim_{n} a_{n}$ (up to subsequence).  	
	
	Hence, since $Z > 4$ we may conclude that there exists
	$\bar{\delta} >0$ and $\bar{\epsilon} > 0$ such that
	\begin{align*}
		0 \leq ( 1 - \mult) - \epsilon \bar{\delta} \int_{\R^{3}}
		\frac{ |\eta|^2 } {|y | } \, \,dy + C \epsilon^2 | \eta
		|^2_{H^2} \leq 1 - \mult - C \bar{\epsilon}
	\end{align*}
	where we have denoted with $C$ various positive constants.
\end{proof}

Now, let $h_{n} =\left( \begin{smallmatrix} h_{1,n} \\ h_{2,n}
\end{smallmatrix} \right) = \phi_n - \phi \rightharpoonup 0 $ weakly
in $H^1$, define $\xi_{+,n} = \fw^{-1} \left( \begin{smallmatrix}
(h_{1,n})_{tr} \\ 0 \end{smallmatrix} \right) $ and $\xi_{-,n} =
\fw^{-1} \left( \begin{smallmatrix} 0 \\ (h_{2,n})_{tr}
\end{smallmatrix} \right) $ and $\xi_n = \fw^{-1} (h_n)_{tr} =
\xi_{+,n} + \xi_{-,n}$ we have
\begin{lem}
	\label{lem:vanishing2}
	If $\xi_n \rightharpoonup 0$ weakly in $H^{1/2}$ then
	\begin{equation*}
		\int_{\R^3 } V \rho_{\xi_{\pm, n} } \, dy \to 0 .
	\end{equation*}
\end{lem}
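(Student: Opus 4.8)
The plan is to reduce the statement to showing $\int_{\R^3}\frac{|\Lambda_{\pm}\xi_{n}|^{2}}{|y|}\,dy\to0$. Indeed, since $\Lambda_{\pm}=\fw^{-1}\bigl(\tfrac{\I_{4}\pm\beta}{2}\bigr)\fw$ and $\tfrac{\I_{4}\pm\beta}{2}\bigl(\begin{smallmatrix}h_{1}\\h_{2}\end{smallmatrix}\bigr)$ equals $\bigl(\begin{smallmatrix}h_{1}\\0\end{smallmatrix}\bigr)$, resp.\ $\bigl(\begin{smallmatrix}0\\h_{2}\end{smallmatrix}\bigr)$, we have $\xi_{\pm,n}=\Lambda_{\pm}\xi_{n}$, hence $\int_{\R^3}V\rho_{\xi_{\pm,n}}\,dy=-Z\int_{\R^3}\frac{|\Lambda_{\pm}\xi_{n}|^{2}}{|y|}\,dy$; moreover $\Lambda_{\pm}$ is bounded (with norm $\le1$) on $H^{1/2}$, so $\Lambda_{\pm}\xi_{n}\rightharpoonup0$ in $H^{1/2}$ as well. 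I would then split $\R^{3}$ into the three regions $\{|y|<\delta\}$, $\{\delta\le|y|\le R\}$ and $\{|y|>R\}$, with $\delta$ small and $R$ large to be fixed at the end.

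On the far region the uniform $L^{2}$--bound suffices: $\int_{|y|>R}\frac{|\Lambda_{\pm}\xi_{n}|^{2}}{|y|}\le R^{-1}|\Lambda_{\pm}\xi_{n}|^{2}_{L^{2}}\le CR^{-1}$, uniformly in $n$. On the intermediate annulus I would use the compact embedding $H^{1/2}(\R^{3})\hookrightarrow L^{2}_{\mathrm{loc}}(\R^{3})$, which gives $\Lambda_{\pm}\xi_{n}\to0$ strongly in $L^{2}(\{|y|\le R\})$, whence $\int_{\delta\le|y|\le R}\frac{|\Lambda_{\pm}\xi_{n}|^{2}}{|y|}\le\delta^{-1}\int_{|y|\le R}|\Lambda_{\pm}\xi_{n}|^{2}\to0$ as $n\to\infty$, for each fixed $\delta,R$. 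Both of these steps are routine.

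The real difficulty is the ball $\{|y|<\delta\}$, where the weight $|y|^{-1}$ is exactly $H^{1/2}$--critical so that weak convergence alone gives nothing. Here I would fix $\chi\in C^{\infty}_{0}(B_{2})$ with $\chi\equiv1$ on $B_{1}$, put $\chi_{\delta}(y)=\chi(y/\delta)$, bound $\int_{|y|<\delta}\frac{|\Lambda_{\pm}\xi_{n}|^{2}}{|y|}\le\int_{\R^3}\frac{|\chi_{\delta}\Lambda_{\pm}\xi_{n}|^{2}}{|y|}$ and decompose $\chi_{\delta}\Lambda_{\pm}\xi_{n}=\Lambda_{\pm}(\chi_{\delta}\xi_{n})+[\chi_{\delta},\Lambda_{\pm}]\xi_{n}$. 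Since $\tfrac{\I_{4}\pm\beta}{2}$ commutes with multiplication by $\chi_{\delta}$, one has $[\chi_{\delta},\Lambda_{\pm}]=\fw^{-1}\tfrac{\I_{4}\pm\beta}{2}[\chi_{\delta},\fw]+[\chi_{\delta},\fw^{-1}]\tfrac{\I_{4}\pm\beta}{2}\fw$, so by Lemma~\ref{lem:stima_commutatori} this operator maps $H^{1/2}$ into $H^{3/2}$ and is a localized, smoothing operator, hence compact on $H^{1/2}$; therefore $[\chi_{\delta},\Lambda_{\pm}]\xi_{n}\to0$ strongly in $H^{1/2}$ and, by Kato's inequality \eqref{eq:Kato}, its contribution to the weighted integral is $\le\gamma_{K}\,|[\chi_{\delta},\Lambda_{\pm}]\xi_{n}|^{2}_{H^{1/2}}\to0$ as $n\to\infty$, for each fixed $\delta$. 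For the remaining piece Tix's inequality \eqref{eq:Tix} gives $\int_{\R^3}\frac{|\Lambda_{\pm}(\chi_{\delta}\xi_{n})|^{2}}{|y|}\le\gamma_{T}\,|\chi_{\delta}\xi_{n}|^{2}_{H^{1/2}}$, so everything is reduced to the bound $\lim_{\delta\to0}\sup_{n}|\chi_{\delta}\xi_{n}|_{H^{1/2}}=0$, i.e.\ to the statement that no $H^{1/2}$--mass of $\xi_{n}=\fw^{-1}(\phi_{n}-\phi)_{\mathrm{tr}}$ concentrates at the origin.

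The hard part is precisely this last bound, and it is where the minimizing structure of $\{\phi_{n}\}$ and the restriction $Z<124$ enter. Splitting $\mathcal{I}$ along $\phi_{n}=\phi+h_{n}$ in a Br\'ezis--Lieb fashion, a bubble $b_{n}$ concentrating at the origin in the ``positive'' component would add to $\mathcal{I}(\phi_{n})$ a term of the form $\|b_{n}\|^{2}_{H^{1}}+\cfstr\int_{\R^3}V\rho_{\Lambda_{+}(b_{n})_{\mathrm{tr}}}+o(1)\ge(1-Z\cfstr\gamma_{T})\|b_{n}\|^{2}_{H^{1}}+o(1)$ (using $|(b_n)_{\mathrm{tr}}|^2_{H^{1/2}}\le\|b_n\|^2_{H^1}$ and Tix), which stays bounded below by a positive constant since $Z\cfstr\gamma_{T}<1$; as this energy surplus cannot be recovered --- it would push $\mathcal{I}(\phi_{n})$ strictly above $\Lambda_{1}=\inf\mathcal{F}$, contradicting $\mathcal{I}(\phi_{n})\to\Lambda_{1}$ together with the maximality of $\phi_{2,n}=\phi_{2}(w_{n})$ (Proposition~\ref{prop:concavity}, Corollary~\ref{cor:sup_achieved_I}) --- one concludes $\lim_{\delta\to0}\sup_{n}|\chi_{\delta}\xi_{n}|_{H^{1/2}}=0$. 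Finally, collecting the three estimates --- first choosing $\delta$, then $R$, then $n$ large --- yields $\int_{\R^3}\frac{|\Lambda_{\pm}\xi_{n}|^{2}}{|y|}\,dy\to0$, which is the assertion.
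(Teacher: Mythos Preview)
Your decomposition into three regions is natural, and the far and intermediate pieces are handled correctly. The paper itself gives no details here, only a pointer to \cite{CotiZelati_Nolasco_2016}, so one cannot compare line by line; but your argument has a genuine gap in the treatment of the ball $\{|y|<\delta\}$, specifically in the claim that $\lim_{\delta\to 0}\sup_n|\chi_\delta\xi_n|_{H^{1/2}}=0$.

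First, note that the conclusion of the lemma is \emph{false} for an arbitrary sequence $\xi_n\rightharpoonup 0$ in $H^{1/2}$: for $\xi_n(y)=n\,\eta(ny)$ with $\eta\in C^\infty_0(\R^3;\C^4)$ one has $\xi_n\rightharpoonup 0$ in $H^{1/2}$, yet a direct computation (using $\Lambda_{\pm}(p)\to\tfrac12\bigl(I\pm\alpha\cdot p/|p|\bigr)$ as $|p|\to\infty$) shows that $\int|\Lambda_{\pm}\xi_n|^2/|y|\,dy$ converges to a \emph{positive} constant for generic $\eta$. So some extra input from the specific Palais--Smale structure of $\phi_n$ is genuinely required, and you are right to look for it.

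However, your energy-comparison argument does not close. You say a concentrating bubble $b_n$ in the positive component would add a term $\ge(1-Z\cfstr\gamma_T)\|b_n\|^2_{H^1}>0$ to $\mathcal{I}(\phi_n)$, ``pushing it strictly above $\Lambda_1$''. But $\mathcal{I}(\phi_n)=\lambda_{W_n}\ge\Lambda_1$ holds by definition, and $\lambda_{W_n}\downarrow\Lambda_1$; there is nothing contradictory about $\mathcal{I}(\phi_n)>\Lambda_1$. To obtain a contradiction you would need either to produce a competitor $\tilde W$ with $\lambda_{\tilde W}<\Lambda_1$, or to know that the weak limit satisfies $\mathcal{I}(\phi)\ge\Lambda_1$ --- and the latter is unavailable precisely because $|\phi_{tr}|_{L^2}=1$ has not yet been established (that is what the lemma is being used to prove). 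Your sketch also treats only the positive component; for a bubble in $\xi_{-,n}$ the sign of the leading quadratic term in $\mathcal{I}$ is reversed and the inequality goes the wrong way, so ``maximality of $\phi_{2,n}$'' cuts in the opposite direction and must be argued separately. Finally, the Br\'ezis--Lieb splitting of $\mathcal{I}$ is not justified for the nonlocal convolution terms coupled to the Coulomb singularity.

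A minor additional point: the compactness of $[\chi_\delta,\Lambda_{\pm}]$ on $H^{1/2}(\R^3)$ does not follow merely from ``maps $H^{1/2}\to H^{3/2}$'', since Sobolev embeddings on $\R^3$ are not compact; one needs a genuine pseudodifferential/localization argument here, and Lemma~\ref{lem:stima_commutatori} as stated gives the $O(R^{-1})$ decay only for $R\ge 1$, not for small $\delta$.
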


\begin{proof}
	The proof is similar, even somewhat simpler than the \cite[Lemma
	B.1]{CotiZelati_Nolasco_2016}
\end{proof}

Then finally taking $\zeta_{n} = \beta h_n $.  in view of Corollary
\ref{cor:sup_achieved_I}, and since $h_n = \phi_n - \phi
\rightharpoonup 0$ by Lemma \ref{lem:vanishing2} , we get
\begin{align*}
	o_n(1) &= \mathcal{T}_n(\zeta_n) = d \mathcal{I}(\phi_n) [\zeta_n]
	- 2 \mult_{n} \RE \langle (\psi_n, \xi_{+,n}-\xi_{-,n}
	\rangle_{L^2} \\
	& = 2\| h_{1,n} \|^2_{H^1} + 2\| h_{2,n} \|^2_{H^1} - 2 \mult_{n}
	( | \xi_{+, n} |^2_{L^2} - |\xi_{-, n}|^2_{L^2} ) \\
	&\quad + 2 \cfstr \int_{\R^3 } V ( \rho_{\xi_{+, n}} - \rho_{
	\xi_{-, n}} ) \, dy \\
	&\quad + 2 \cfstr \iint_{\R^3 \times \R^3 } \frac{ \rho_{\psi_n}
	(y) \rho_{\xi_{+, n}} (z) - J_{\psi_n} (y) \cdot J_{ \xi_{+, n}}
	(z) } {|y-z | } \, dy \, dz\\
	&\quad - 2 \cfstr \iint_{\R^3 \times \R^3 } \frac{ \rho_{\psi_n}
	(y) \rho_{\xi_{-, n}}(z) - J_{\psi_n} (y) \cdot J_{ \xi_{-, n}
	}(z)} {|y-z | } \, dy \, dz + o_n(1) \\
	&\geq 2 (1 - \mult_n ) \| h_{1,n} \|^2_{H^1} + 2(1 - 2 \gamma_T)
	\| h_{2,n} \|^2_{H^1} + o_n(1) .
\end{align*}
since $\mult < 1$ we may conclude that $\phi_n \to \phi $ strongly
in $H^1$.

\providecommand{\noopsort}[1]{} \providecommand{\cprime}{$'$}
  \providecommand{\scr}{} \def\ocirc#1{\ifmmode\setbox0=\hbox{$#1$}\dimen0=\ht0
  \advance\dimen0 by1pt\rlap{\hbox to\wd0{\hss\raise\dimen0
  \hbox{\hskip.2em$\scriptscriptstyle\circ$}\hss}}#1\else {\accent"17 #1}\fi}

\end{document}